\documentclass{amsart}

\usepackage{graphicx}
\usepackage{amsmath}
\usepackage{amsfonts}
\usepackage{amssymb}
\usepackage{mathtools}
\usepackage{amscd}
\usepackage[all,cmtip]{xy}
\usepackage{amsthm}
\usepackage[capitalise]{cleveref}
\usepackage{xcolor}

%


\newtheorem{theorem}{Theorem}[section]
\newtheorem{proposition}[theorem]{Proposition}
\newtheorem{corollary}[theorem]{Corollary}
\newtheorem{lemma}[theorem]{Lemma}

\newtheorem{step}{Step}

  \theoremstyle{definition}

\newtheorem{remark}[theorem]{Remark}
\newtheorem{question}[theorem]{Question}



\newcommand{\dbZ}{\mathbb{Z}}

\newcommand{\Z}{\mathbb{Z}}

\newcommand{\calA}{{\mathcal A}}
\newcommand{\calB}{{\mathcal B}}

\newcommand{\calF}{{\mathcal F}}
\newcommand{\calG}{{\mathcal G}}

\newcommand{\calP}{{\mathcal P}}

\newcommand{\orf}[1]{O_\calF #1}


\newcommand{\nbeq}{\begin{equation}}
\newcommand{\neeq}{\end{equation}}
\newcommand{\beq}{\begin{equation*}}
\newcommand{\eeq}{\end{equation*}}


\newcommand{\fpn}{\mathrm{FP}_{n}}

\newcommand{\f}[1]{\mathrm{F}_{#1}}
\newcommand{\fn}{\mathrm{F}_{n}}

\newcommand{\esstriv}[2]{$\pi_{#1}$-$#2$-essentially trivial}
\newcommand{\hesstriv}[2]{$H_{#1}$-$#2$-essentially trivial}

\newcommand{\ffp}[1]{\calF \text{-}\mathrm{FP}_{#1}}
\newcommand{\gf}[1]{\calG \text{-}\mathrm{F}_{#1}}
\newcommand{\ffpn}{\calF\text{-}\mathrm{FP}_{n}}

\newcommand{\ff}[1]{\calF \text{-}\mathrm{F}_{#1}}
\newcommand{\ffn}{\calF\text{-}\mathrm{F}_{n}}

\DeclareMathOperator{\CAT}{CAT}





\begin{document}

\title{Brown's Criterion and classifying spaces for families}

\author{Eduardo Mart\'inez-Pedroza}
\address{Memorial University  St. John's, Newfoundland and Labrador, Canada}
\email{eduardo.martinez@mun.ca}

\author{Luis Jorge S\'anchez Salda\~na}
\address{Department of Mathematics, The Ohio State University, 100 Math Tower,
231 West 18th Avenue,
Columbus, OH 43210-1174, USA}
\email{luisjorge@ciencias.unam.mx}

\date{}


\keywords{Brown's criterion, classifying spaces for families, finiteness properties}

\subjclass{Primary 20J05; Secondary 20J06}

\begin{abstract}
Let $G$ be a group and $\calF$ be a family of subgroups closed under conjugation and subgroups. A model for the classifying space $E_\calF G$ is a $G$-CW-complex $X$ such that every isotropy group belongs to $\calF$, and for all $H\in \calF$ the fixed point subspace $X^H$ is contractible. The group $G$ is of type $\ffn$ if it admits a model for $E_\calF G$ with $n$-skeleton with compact orbit space. The main result of the article provides is a characterization of $\ff{n}$ analogue to Brown's criterion for $\mathrm{FP}_n$. As applications we provide criteria for this type of finiteness properties with respect to families to be preserved by finite extensions, a result that contrast with examples of Leary and Nucinkis. We also recover L\"uck's characterization of property $\underline{\mathrm{F}}_n$ in terms of the finiteness properties of the Weyl groups.  
\end{abstract}
\maketitle

\section{Introduction}

 Let $G$ be a group and $\calF$ be a family of  subgroups, i.e. a collection of subgroups of $G$ such that it is closed under conjugation and under taking subgroups. The family $\calF$ is called~\emph{trivial} if it only consists of the trivial subgroup.  In this article, a $G$-CW-complex is a CW-complex together with a cellular action of $G$ such that if a group element fixes a cell setwise, then it fixes the cell pointwise. 
 
 A model for the classifying space $E_\calF G$ is a $G$-CW-complex $X$ such that every isotropy group belongs to $\calF$, and for all $H\in \calF$ the fixed point subspace $X^H$ is contractible. A $G$-CW-complex $Y$ is an $\calF$-$G$-CW-complex if every isotropy group of $G$ belongs to $\calF$. A model for $E_\calF G$ can be equivalently defined as  $\calF$-$G$-CW-complex $X$, such that, for every $\calF$-$G$-CW-complex $Y$ there exists a $G$-map $Y\to X$, unique up to $G$-homotopy.
 
 Let $n\geq 0$. We say $G$ is of type $\ffn$ if it admits a model for $E_\calF G$ with $n$-skeleton with compact orbit space. Equivalently, $G$ is of type $\ffn$ if and only if there exists an $n$-dimensional $G$-CW-complex $Y$ such that:
 
 \begin{itemize}
     \item all of its isotropy groups, $G_y$ with $y\in Y$ belong to $\calF$,
     \item $Y^H$ is $(n-1)$-connected for all $H\in \calF$, and
     \item the orbit space $Y/G$ is compact (or $Y$ is $G$-finite).
 \end{itemize}
 In this situation we say that $Y$ is a \emph{$G$-witness for $\ff{n}$}.

 If $\calF$ contains only the trivial subgroup of $G$, then we recover the classical properties $F_n$ for discrete groups (see \cite{Br94}). 
 In the case that $\calF$ is the family of finite subgroups, then the notation $E_\calF G$ and $\ff{n}$ is replaced by $\underline{E}G$ and $\underline{\mathrm{F}}_n$.

 In~\cite{Br87} Brown proved a topological characterization, nowadays known as \textit{Brown's criterion}, for the algebraic counterpart of   property $\fn$, the property known as $\mathrm{FP}_n$. More rencently, Fluch and Witzel~\cite{FW13} proved an analogous characterization for the property $\ffpn$. In this article we prove a similar characterization dealing with property $\ffn$. 
 
 Brown's article proved {Brown's criteria} for properties $\mathrm{F_1}$ and $\mathrm{F_2}$ relying on their topological interpretations as being finitely generated and finitely presented respectively. A Brown's criterion for property $\mathrm{F}_n$, with $n>2$, can be proved using original's Brown's criteria for $\mathrm{F}_2$ and $\fpn$, and the fact that a group is $\mathrm{F}_n$ if and only if it is $F_2$ and $\mathrm{FP}_n$.  For a group $G$ and an arbitrary family $\mathcal{F}$, this strategy does not succeed to obtain a Brown's criterion for $\ffn$ due to the subtleness of properties $\ffn$ for $n=1,2$ in contrast with their  counterparts $\mathrm{F}_1$ and $\mathrm{F}_2$, see Section~\ref{sec:WitnessF0}.


 On the other hand, Dru\c tu and Kapovich proved in~\cite{DK18} a Brown's criterion for property ${F}_n$ using the Rips complex of $G$.  In the present paper we closely follow their strategies to prove our main theorem, replacing the role of the Rips complex, by a suitable classifying space.
 
Let $X$ be a $G$-CW-complex, let $\calF$ be a family, and let $n>0$. We say $X$ is \emph{$\calF$-$n$-good} if the following conditions are satisfied:

\begin{enumerate}
    \item For all $H\in \calF$, the fixed point set $X^H$ is
    non-empty, equivalently, $\pi_0(X^H)$ is a non-empty set.
    
    \item For all $H\in\calF$, and for all $x_0\in X^H$, $X$ is \emph{$\calF$-connected up to dimension $n-1$}, i.e. $\pi_0(X^H)$ is a set with exactly one element and for all $0< k\leq n-1$, $\pi_k(X^H,x_0)$ is the trivial group. 
    
    \item For every $p$-cell $\sigma$ of $X$, $p\leq n$,  the (pointwise) stabiliser $G_\sigma$ of $\sigma$ is of type $(\calF \cap G_\sigma )$-$\f{n-p}$.
\end{enumerate}
 
\begin{remark}\label{remark:Fngood}
Let $X$ be an $\calF$-$n$-good complex.
\begin{enumerate}
    \item Condition (1) implies that $\calF$ is contained in the family of $G$ generated by the isotropy groups of $X$, i.e. the smallest family of subgroups of $G$ which contains all the isotropy groups of $X$.
    \item If the family of $G$ generated by the isotropy groups of $X$ equals $\calF$, then $X^{(n)}$ is the $n$-skeleton of a model for $E_\calF G$.
    \item Observe that the notion of $\calF$-$n$-good only depends on the $n$-skeleton of $X$, i.e. $X$ is $\calF$-$n$-good if and only if $X^{(n)}$ is $\calF$-$n$-good.
\end{enumerate}
\end{remark}

By a \emph{filtration} of $X$ we mean  a family of $G$-subcomplexes $\{X_\alpha\}_{\alpha\in I}$, such that $I$ is a directed set, $X_\alpha \subseteq X_\beta $ if $\alpha \leq \beta$, and $X=\bigcup_\alpha X_\alpha$. A filtration $\{X_\alpha\}_{\alpha \in I}$ of $X$
is said to be of \emph{finite $n$-type} if the $n$-skeleta $X_\alpha^{(n)}$ is $G$-finite for every $\alpha \in I$.

Let $k$ be a positive integer. We say the filtration $\{X_\alpha\}_{\alpha \in I}$ is \textit{ $\pi_k$-$\calF$-essentially trivial} (resp. \textit{ $\pi_0$-$\calF$-essentially trivial}) if for all $\alpha\in I$, there exists $\beta\geq \alpha$ such that the homomorphism induced by inclusion $\pi_k(X_\alpha^H,x_0)\to \pi_k(X_\beta^H,x_0)$ (resp. the function induced by inclusion $\pi_0(X_\alpha^H)\to \pi_0(X_\beta^H)$ ) is the zero homomorphism (resp. is a constant function) for all $H\in \calF$ and for all $x_0\in X_\alpha^H$.

 \begin{theorem}\label{Browns:homotopy:Criterium}
 Let $G$ be a group and $\calF$ a family of subgroups of $G$. Let $X$ be an $\calF$-$n$-good $G$-CW-complex, and let $\{X_\alpha\}_{\alpha \in I}$ be a filtration of finite $n$-type by $G$-subcomplexes.  Consider the following statements:
 \begin{enumerate}
     \item\label{Brown:Criterium:01} $G$ is of type $\ffn$.
     \item\label{Brown:Criterium:02} For each $k<n$ we have that the filtration $\{X_\alpha\}_{\alpha \in I}$ is \esstriv{k}{\calF}.
 \end{enumerate}
 Then \eqref{Brown:Criterium:01} implies \eqref{Brown:Criterium:02}, and if, additionally, $X$ has $G$-finite $0$-skeleton then \eqref{Brown:Criterium:02} implies \eqref{Brown:Criterium:01}.
 \end{theorem}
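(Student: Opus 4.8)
The plan is to adapt Dru\c{t}u--Kapovich's proof of the classical Brown criterion to the equivariant setting, working throughout with fixed-point sets and letting $X$ play the role of the Rips complex. In both directions the engine is equivariant obstruction theory: since $X$ is $\calF$-$n$-good, $X^{H}$ is non-empty and $(n-1)$-connected for every $H\in\calF$, so a $G$-map into $X$ from any $\calF$-$G$-CW-complex $Z$ of dimension at most $n$ can be built cell by cell, and any two such $G$-maps agree, up to $G$-homotopy, on the $(n-1)$-skeleton of $Z$; the same holds with $X$ replaced by a $G$-witness $Y$ for $\ff{n}$, whose $\calF$-fixed-point sets are $(n-1)$-connected. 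A second ingredient is a L\"uck--Weiermann style blow-up built from condition~(3): any $G$-CW-complex of dimension at most $n$ can be replaced by an at most $n$-dimensional $\calF$-$G$-CW-complex, which is $G$-finite if the original was, and whose $\calF$-fixed-point sets are $(n-1)$-connected whenever those of the original are; this turns $G$-finite subcomplexes of $X$, whose cell stabilisers may lie outside $\calF$, into $G$-finite $\calF$-$G$-CW-complexes.

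For \eqref{Brown:Criterium:01} $\Rightarrow$ \eqref{Brown:Criterium:02}, suppose first that $X$ is itself an $\calF$-$G$-CW-complex. Fix $\alpha\in I$ and $k<n$, and let $Y$ be a $G$-witness for $\ff{n}$. Choose a $G$-map $g\colon Y\to X^{(n)}$; as $Y$ is $G$-finite, $g(Y)\subseteq X_{\beta_0}^{(n)}$ for some $\beta_0\ge\alpha$. Choose also a $G$-map $f\colon X^{(n)}\to Y$. Then $g\circ f$ and the inclusion $X^{(n)}\hookrightarrow X$ agree, up to $G$-homotopy, on $X^{(n-1)}$; restricting such a $G$-homotopy to the $G$-finite complex $X_\alpha^{(n-1)}$, its image lies in some $X_\beta^{(n)}$ with $\beta\ge\beta_0$. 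Now for every $H\in\calF$ and every basepoint, the inclusion $X_\alpha^{(n-1),H}\hookrightarrow X_\beta^{(n),H}$ is homotopic to $g^{H}\circ f^{H}$ restricted there, which factors through $Y^{H}$, so it induces the zero map on $\pi_k$ because $\pi_k(Y^{H},\ast)=0$. Since $\pi_k(X_\alpha^{H},\ast)$ depends only on the $(k+1)$-skeleton, and $\beta$ depends on neither $H$ nor the basepoint, the filtration is \esstriv{k}{\calF} (for $k=n-1$ one first moves representatives into the $(n-1)$-skeleton by cellular approximation). In general one runs this argument after blowing up the $n$-skeleton of $X$; the delicate extra point is that the conclusion must be transferred back along the blow-down, which is where the connectivity behaviour of the blow-up on $\calF$-fixed-point sets is used.

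For \eqref{Brown:Criterium:02} $\Rightarrow$ \eqref{Brown:Criterium:01}, assume in addition that $X^{(0)}$ is $G$-finite, and fix $\alpha_0\in I$ with $X_{\alpha_0}^{(0)}=X^{(0)}$. Feeding \esstriv{k}{\calF} for $k=0,1,\dots,n-1$ into the $G$-finiteness of $X_{\alpha_0}^{(n)}$, choose one stage $\beta\ge\alpha_0$ for which $\pi_k(X_{\alpha_0}^{H},\ast)\to\pi_k(X_\beta^{H},\ast)$ vanishes for all $k<n$, all $H\in\calF$, and all basepoints (for $k=0$, landing in a single component). Blow $X_\beta^{(n)}$ up to a $G$-finite $\calF$-$G$-CW-complex $W$ of dimension $n$, and then build from $W$ and the above data a $G$-finite $\calF$-$G$-CW-complex $Z$ of dimension $n$ with $Z^{H}$ $(n-1)$-connected for every $H\in\calF$: concretely, attach to $W$, for $k=0,1,\dots,n-1$ in turn, finitely many $G$-orbits of $(k+1)$-cells prescribed by $X_{\alpha_0}$, using essential triviality to see that finitely many orbits suffice at each step and using that the colimit $X^{H}$ is $(n-1)$-connected to control the result. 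Then $Z$ is a $G$-witness for $\ff{n}$, so $G$ is of type $\ff{n}$.

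The principal obstacle is this construction in the converse: carrying out the ``completion to $\calF$-$(n-1)$-connectivity'' $G$-equivariantly while attaching only finitely many orbits of cells at each stage, and then verifying the result. The delicate points are: (i) the case $k=0$, where essential triviality gives only a constant value, so one must combine $G$-finiteness of $X^{(0)}$ with connectivity of the colimit to manufacture a $G$-finite connected substitute; (ii) basepoint bookkeeping, since the $X_\alpha^{H}$ need not be simply connected and free null-homotopies have to be promoted to based ones; and (iii) checking that the blow-up and the cell attachments leave the $\calF$-fixed-point sets $(n-1)$-connected, which uses both the connectivity half of the blow-up construction and the hypothesis that $X$ is $\calF$-$n$-good. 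Once the blow-up reduction is in place, the forward implication is comparatively routine.
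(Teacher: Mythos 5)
Your overall architecture matches the paper's: reduce to the case where $X$ is an $\calF$-$G$-CW-complex with $G$-finite $0$-skeleton via the Haefliger/L\"uck blow-up (this is exactly Corollary~\ref{reduction:F:isotropy}), and then argue \`a la Dru\c{t}u--Kapovich with $X$ playing the role of the Rips complex. Your treatment of \eqref{Brown:Criterium:01}$\Rightarrow$\eqref{Brown:Criterium:02} is correct and is essentially the paper's argument: choose $f\colon X^{(n)}\to Y$ and $g\colon Y\to X^{(n)}$ with $Y$ a $G$-finite witness, push $g(Y)$ into some $X_\beta$, and factor $\pi_k((X_\alpha^{(n-1)})^H)\to\pi_k(X_\beta^H)$ through $\pi_k(Y^H)=0$, then use that restriction to the $(n-1)$-skeleton is an isomorphism on $\pi_k$ for $k<n-1$ and surjective for $k=n-1$.

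The converse, however, has a genuine gap exactly where you flag the ``principal obstacle,'' and flagging it is not the same as closing it. Your plan is to fix one $\beta$, blow $X_\beta^{(n)}$ up to a $G$-finite $\calF$-$G$-CW-complex $W$, and then for $k=0,\dots,n-1$ attach finitely many $G$-orbits of $(k+1)$-cells ``prescribed by $X_{\alpha_0}$, using essential triviality to see that finitely many orbits suffice.'' But essential triviality only tells you that $\pi_k(X_\alpha^H)\to\pi_k(X_\beta^H)$ is eventually zero; it does not say that the relevant homotopy or homology of the \emph{partially built witness} is finitely generated over the orbit category, which is what you actually need to attach only finitely many orbits of cells and still kill everything. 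The paper resolves this for $k\geq 2$ by a homological detour that is absent from your sketch: after extending the $G$-homotopy between the inclusion of the previous witness and the round-trip $g\circ f$ over all cells (attaching finitely many orbits to make room for the extension), one passes to Bredon chain complexes, uses a commutative diagram with the boundary map $C_{k+1}(X_{\alpha_{k+1}})\to C_k(X_{\alpha_{k+1}}^{(k)})$ and the maps $f_*,g_*$ to show that $H_k^\calF$ of the intermediate complex is finitely generated by spheres indexed by the $G$-finite set of $(k+1)$-cells of $X_{\alpha_{k+1}}$, attaches cells along these, and finally invokes Hurewicz on each fixed-point set to convert vanishing of $H_k^\calF$ into vanishing of $\pi_k$. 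None of this is recoverable from ``essential triviality shows finitely many orbits suffice.'' Separately, your low-degree bookkeeping omits the specific mechanism the paper uses at $k=1$: one takes the $G$-homotopy $H_1$ between the inclusion $X^{(1)}\hookrightarrow X^{(2)}$ and a retraction onto $X_{\alpha_1}^{(1)}$, chooses $\alpha_2$ so that $X_{\alpha_2}$ contains the $G$-finite set $H_1(X^{(0)}\times[0,1])$ (this is precisely where $G$-finiteness of $X^{(0)}$ is used; see Remark~\ref{remark:finite:cero:skeleton}), and attaches $2$-cells along the loops $H_1(\partial(e\times[0,1]))$ for representatives $e$ of $1$-cell orbits. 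That is quite different from attaching cells ``prescribed by $X_{\alpha_0}$,'' and without it the claim that every loop in $Y_2^H$ becomes null-homotopic does not go through.
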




The hypothesis in Theorem~\ref{Browns:homotopy:Criterium} that $X$  has $G$-finite $0$-skeleton in order to obtain the equivalence of (1) and (2) can be waived under assumptions on the family $\calF$, this is illustrated with Theorem~\ref{Brown:Corollary:ACC} whose proof is postponed to Section~\ref{sec:maintheorem}. Recall that a poset satisfies the ascending chain condition (ACC) if every increasing sequence  of elements is eventually constant. 

\begin{theorem}\label{Brown:Corollary:ACC} Under the assumptions of Theorem~\ref{Browns:homotopy:Criterium}, if in addition the family $\calF$ is generated by a finite collection of maximal elements, and the poset of subgroups $(\calF, \subseteq)$ satisfies the ascending chain condition, then statements~\eqref{Brown:Criterium:01} and~\eqref{Brown:Criterium:02} are equivalent. \end{theorem}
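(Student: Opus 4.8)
The plan is to reduce to Theorem~\ref{Browns:homotopy:Criterium} by producing, from the given $\calF$-$n$-good complex $X$ with its filtration $\{X_\alpha\}_{\alpha\in I}$ of finite $n$-type, a \emph{new} $\calF$-$n$-good complex $X'$ with $G$-finite $0$-skeleton together with a filtration of finite $n$-type that is $\esstriv{k}{\calF}$ if and only if the original one is. Since the only implication of Theorem~\ref{Browns:homotopy:Criterium} that needs the $G$-finite $0$-skeleton hypothesis is \eqref{Brown:Criterium:02}$\Rightarrow$\eqref{Brown:Criterium:01}, it suffices to arrange that \eqref{Brown:Criterium:02} for $X$ transfers to \eqref{Brown:Criterium:02} for $X'$.

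First I would use the hypothesis that $\calF$ is generated by finitely many maximal elements $H_1,\dots,H_m$: this gives a canonical ``small'' $\calF$-$G$-CW-complex to glue on, namely the disjoint union $Z=\coprod_{i=1}^m G/H_i$, which is a single $G$-orbit of $0$-cells for each $i$ and whose isotropy groups generate exactly $\calF$ (every $H\in\calF$ is subconjugate to some $H_i$). Form $X' = X \sqcup Z$ (disjoint union of $G$-CW-complexes), and filter it by $X'_\alpha = X_\alpha \sqcup Z$. Then $X'^{(0)}$ differs from $X^{(0)}$ only by finitely many $G$-orbits of vertices, so $X'$ has $G$-finite $0$-skeleton precisely when $X$ does \emph{relative to} those finitely many orbits; more to the point, $\{X'_\alpha\}$ still has finite $n$-type. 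The subtlety is that $X'$ is \emph{not} $\calF$-connected: its fixed sets $X'^H = X^H \sqcup Z^H$ are disconnected. This is where the ACC hypothesis enters — I would instead take $X' = X * Z$, the fibrewise (or ordinary) $G$-join, so that $(X*Z)^H = X^H * Z^H$; a join with a nonempty space is path-connected, and since $Z^H\neq\emptyset$ for every $H\in\calF$, the join is $\calF$-connected up to dimension $n-1$ whenever $X$ is (using that $X^H$ was already $(n-1)$-connected, or at worst that $Z^H$ has finitely many points by ACC-finiteness of the relevant orbit posets, so the join only improves connectivity in the range $\le n-1$). The cells of $X*Z$ in dimensions $\le n$ are joins $\sigma * \tau$ with $\dim\sigma + \dim\tau + 1 \le n$, and the stabiliser of $\sigma*\tau$ is $G_\sigma \cap G_\tau$; I would check that each such stabiliser is of type $(\calF\cap G_{\sigma*\tau})$-$\f{n - \dim(\sigma*\tau)}$ using condition (3) for $X$ and the fact that the $G_\tau$ are conjugates of the $H_i$, together with the standard inheritance of type-$\ff{\ell}$ under passing to subgroups that lie in $\calF$ when the ambient group is already of the right finiteness type (here one uses that finite-index-type arguments are not needed because the relevant skeletal pieces are $G$-finite). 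The ACC and finite-generation hypotheses on $\calF$ guarantee there are only finitely many $G$-conjugacy classes of subgroups arising as such stabilisers in each fixed degree, keeping everything $G$-finite.

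Next I would verify the essential-triviality transfer. Filtering $X*Z$ by $X'_\alpha = X_\alpha * Z$, inclusion induces on fixed sets the map $X_\alpha^H * Z^H \hookrightarrow X_\beta^H * Z^H$, and by the standard computation of homotopy groups of a join (or the Mayer–Vietoris/van Kampen argument comparing $X^H*Z^H$ with the suspension-like model), $\pi_k(X_\alpha^H * Z^H, x_0)$ is controlled by $\pi_{k}(X_\alpha^H)$ together with lower homotopy of $Z^H$, which is trivial above degree $0$; since $Z^H$ is a finite set (ACC again), the relevant maps in degrees $1\le k < n$ are essentially trivial for $\{X'_\alpha\}$ exactly when the corresponding maps $\pi_k(X_\alpha^H)\to\pi_k(X_\beta^H)$ are, and the $\pi_0$ statement holds because both $X_\alpha^H * Z^H$ and $X_\beta^H * Z^H$ are already path-connected. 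Hence \eqref{Brown:Criterium:02} holds for $(X, \{X_\alpha\})$ iff it holds for $(X', \{X'_\alpha\})$, and the latter satisfies the extra hypothesis of Theorem~\ref{Browns:homotopy:Criterium}; applying that theorem to $X'$ yields the equivalence of \eqref{Brown:Criterium:01} and \eqref{Brown:Criterium:02}.

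The main obstacle I anticipate is the bookkeeping in the join construction: ensuring that $X*Z$ is genuinely an $\calF$-$n$-good $G$-CW-complex with a finite-$n$-type filtration. Two points require care. (i) The stabiliser condition (3): a cell $\sigma*\tau$ has stabiliser $G_\sigma \cap gH_ig^{-1}$ for some $g,i$, and one must argue this intersection — a subgroup of both $G_\sigma$ and of a conjugate of $H_i$ — has the requisite finiteness type; this is where the ACC hypothesis is doing real work, via the fact (to be established or cited) that a family with finitely many maximal generators satisfying ACC has, within any subgroup $K\in\calF$, only finitely many subgroups, so ``type $(\calF\cap K)$-$\f{\ell}$'' is automatic for such $K$. (ii) Checking that $Z^H \ne \emptyset$ and is finite for all $H\in\calF$, and more delicately that these joins do not destroy $G$-finiteness of skeleta — this forces one to be slightly careful about whether to use the full join or only its $n$-skeleton, but since $\calF$-$n$-goodness and finite-$n$-type are skeletal notions (Remark~\ref{remark:Fngood}(3)), it is enough to work with $(X*Z)^{(n)}$ throughout.
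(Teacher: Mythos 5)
Your construction goes in exactly the wrong direction, and the gap is fatal. The hypothesis that needs to be manufactured is that $X'$ has \emph{$G$-finite $0$-skeleton}, i.e.\ a problem of \emph{too many} vertex orbits. But the join $X * Z$ contains $X$ as a subcomplex: in the standard CW structure on a join, the cells are the cells of $X$, the cells of $Z$, and the joins $\sigma * \tau$ of a cell of $X$ with a cell of $Z$. Hence $(X*Z)^{(0)} = X^{(0)} \sqcup Z^{(0)}$, which is $G$-finite if and only if $X^{(0)}$ already was. Adjoining $Z$ (or joining with it) can only add $G$-orbits of $0$-cells; it cannot remove them. So $X'$ fails the very hypothesis you are trying to arrange, and Theorem~\ref{Browns:homotopy:Criterium} cannot be applied to it. To repair this you would need to \emph{collapse} a $G$-equivariant spanning object in $X^{(1)}$ so that infinitely many orbits of $0$-cells are identified down to finitely many orbits, not enlarge $X$. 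That is the content of Proposition~\ref{prop:FF0andAAC}: one builds (via Zorn's Lemma, using ACC to ensure maximal systems are spanning) a finite family of ``contracting trees'' indexed by the maximal generators $H_1,\dots,H_m$, shows the quotient by their $G$-translates is a $G$-CW-complex with $G$-finite $0$-skeleton, and checks the quotient map is a homotopy equivalence on all $H$-fixed sets. The equivalence of the two essential-triviality conditions is then verified by comparing the filtration of $X$ with its image filtration in the quotient.

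A smaller inaccuracy, less consequential but worth flagging: you assert that ACC plus finite generation implies every $K \in \calF$ has only finitely many subgroups. This is false in general ($\mathbb{Z}$ satisfies ACC on subgroups yet has infinitely many). Fortunately your use of this is a red herring: for any cell $\sigma * \tau$ with $\tau$ a cell of $Z$, the stabiliser $G_\sigma \cap G_\tau$ is a subgroup of a conjugate of some $H_i$, hence lies in $\calF$; since $\calF$ is closed under subgroups, $\calF \cap K$ is the family of \emph{all} subgroups of $K$, so a single point is a model for $E_{\calF\cap K}K$ and $K$ is automatically of type $(\calF\cap K)$-$\f{\infty}$. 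Condition (3) was never at issue. The issue is, and remains, the $0$-skeleton.
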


The following corollary characterizes the property $\ff{n}$ for families consisting only of finite subgroups. In particular, it provides an $\f{n}$ version of the Brown's criterion for $\fpn$ in~\cite{Br87}.

\begin{corollary}\label{Corrollary:finite:subgroups}
Under the assumptions of Theorem~\ref{Browns:homotopy:Criterium}, if in addition the family $\calF$ contains only finite subgroups of $G$, and $G$ is $\ff{0}$, then statements~\eqref{Brown:Criterium:01} and~\eqref{Brown:Criterium:02} are equivalent.
\end{corollary}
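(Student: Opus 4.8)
The plan is to reduce Corollary~\ref{Corrollary:finite:subgroups} to Theorem~\ref{Brown:Corollary:ACC}. Since the hypotheses of Theorem~\ref{Browns:homotopy:Criterium} are assumed throughout, the only thing to establish is that the two extra hypotheses of Theorem~\ref{Brown:Corollary:ACC} follow automatically from the assumptions that $\calF$ consists of finite subgroups and that $G$ is of type $\ff{0}$; namely (a) $\calF$ is generated by a finite collection of maximal elements, and (b) the poset $(\calF,\subseteq)$ satisfies the ascending chain condition. Note that in particular we are \emph{not} assuming $G$-finiteness of the $0$-skeleton of $X$; that role is played here by the $\ff{0}$ hypothesis on $G$.

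First I would unpack what $\ff{0}$ means. A $G$-witness for $\ff{0}$ is a $0$-dimensional $G$-CW-complex $Y$, i.e.\ a $G$-set, whose isotropy groups lie in $\calF$, with $Y^H$ non-empty for every $H\in\calF$, and with $Y/G$ compact. Writing $Y=\bigsqcup_{i=1}^{m}G/K_i$ with each $K_i\in\calF$, the condition that $Y^H\neq\emptyset$ for all $H\in\calF$ translates exactly to: every $H\in\calF$ is subconjugate to some $K_i$. Hence $\calF$ is precisely the family generated by the finite collection $\{K_1,\dots,K_m\}$ of finite subgroups. Set $N=\max_i\abs{K_i}<\infty$; then $\abs{H}\le N$ for every $H\in\calF$.

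For (b), given an ascending chain $H_1\subseteq H_2\subseteq\cdots$ in $\calF$, the orders $\abs{H_1}\le\abs{H_2}\le\cdots$ form a non-decreasing sequence of integers bounded by $N$, hence eventually constant; since each $H_j$ is finite, equality of consecutive orders forces equality of the subgroups, so the chain stabilizes, and $(\calF,\subseteq)$ satisfies ACC. For (a), the same order bound shows that for each $i$ the set $\{H'\in\calF : H'\supseteq K_i\}$ contains an element of maximal order, which is then a maximal element $M_i$ of $\calF$ with $K_i\subseteq M_i$. Since every $H\in\calF$ is subconjugate to some $K_i$ and hence to $M_i$, the finite collection $\{M_1,\dots,M_m\}$ of maximal elements of $\calF$ generates $\calF$. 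With (a) and (b) in hand, Theorem~\ref{Brown:Corollary:ACC} applies directly and yields the equivalence of~\eqref{Brown:Criterium:01} and~\eqref{Brown:Criterium:02}.

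The only mildly delicate point — really the heart of the argument — is the bookkeeping with subconjugacy and with passing from the generators $K_i$ to maximal overgroups $M_i$; everything else is a routine translation of the definition of $\ff{0}$ into a statement about the poset $\calF$. Consequently I do not anticipate a genuine obstacle: the corollary is essentially a repackaging of Theorem~\ref{Brown:Corollary:ACC} in the special case where $\calF$ is a family of finite subgroups and $G$ is already known to satisfy the weakest finiteness condition $\ff{0}$.
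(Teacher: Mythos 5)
Your proof is correct and follows essentially the same route as the paper: derive from $\ff{0}$ a bound on the orders of subgroups in $\calF$, deduce ACC and generation by finitely many maximal elements, and then invoke Theorem~\ref{Brown:Corollary:ACC}. The paper's version is terser (it asserts both conclusions directly from the order bound), whereas you spell out the intermediate step of passing from the finitely many stabilizers $K_i$ of a witness to maximal overgroups $M_i$, which is in fact the careful way to justify the ``finitely many maximal elements'' claim.
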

\begin{proof}
In this case, that $G$ is $\ff{0}$ implies that there is an upper bound on the cardinality of subgroups in $\calF$. Hence $(\mathcal{F}, \subseteq)$ satisfies ACC and it is generated by a finite collection of maximal elements. The statement follows from Theorem~\ref{Brown:Corollary:ACC}. 
\end{proof}

Leary and Nucinkis exhibited that finiteness properties are not preserved under finite extensions~\cite{LN03}, they found a group $G$ with a finite index subgroup $H$ such that $H$ is $\underline{\mathrm{F}}_0$ and $G$ is not $\underline{\mathrm{F}}_0$. The corollary below addresses a case where finiteness properties of a group are preserved under finite extensions.

\begin{corollary}\label{Corollary:finite:extensions}
Let $G$ be a group and let $H$ be a finite index subgroup. Let $\calF$ be a family of subgroups of $H$ which is also a family of subgroups of $G$. If $n\geq 0$ and $H$ is $\ff{n}$, then $G$ is $\ff{n}$.
\end{corollary}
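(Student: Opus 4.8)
The plan is to convert an $H$-witness for $\ff{n}$ into a $G$-finite $\calF$-$n$-good $G$-CW-complex, from which $\ff{n}$ for $G$ follows by Remark~\ref{remark:Fngood} (or, equivalently, by Theorem~\ref{Browns:homotopy:Criterium} applied to a one-element filtration). Two features of the hypothesis will be used repeatedly: every subgroup in $\calF$ is a subgroup of $H$, and $\calF$ is closed under $G$-conjugation. Since $\calF$-$n$-goodness is only defined for $n>0$, the case $n=0$ is handled separately and elementarily: because $H$ is $\ff{0}$ there are finitely many $L_1,\dots,L_m\in\calF$ such that every member of $\calF$ is $H$-subconjugate to some $L_i$, and then the $G$-set $\coprod_i G/L_i$ is $G$-finite, has isotropy groups in $\calF$ which generate $\calF$ as a family of $G$, and has nonempty $K$-fixed point sets for every $K\in\calF$, hence is a $G$-witness for $\ff{0}$. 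Assume from now on that $n\geq 1$.

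First I would fix an $H$-witness $Y$ for $\ff{n}$: an $n$-dimensional $H$-CW-complex with isotropy groups in $\calF$, with $Y^{K}$ $(n-1)$-connected for every $K\in\calF$, and with $Y/H$ compact. As $Y^{K}$ is nonempty for all $K\in\calF$, the isotropy groups of $Y$ generate $\calF$ as a family of $H$. I would then induce up and set $Z=G\times_{H}Y$, an $n$-dimensional $G$-finite $G$-CW-complex whose isotropy groups are $G$-conjugates of isotropy groups of $Y$, hence lie in $\calF$ and generate it as a family of $G$. The crucial point is the identification of fixed point sets: for $K\in\calF$,
\[
Z^{K}\ =\ \coprod_{gH\in(G/H)^{K}}Y^{\,g^{-1}Kg},\qquad (G/H)^{K}=\{\,gH:\ g^{-1}Kg\leq H\,\},
\]
where $(G/H)^{K}$ is finite and nonempty (it contains $eH$), and each $g^{-1}Kg$ again lies in $\calF$, so each $Y^{\,g^{-1}Kg}$ is $(n-1)$-connected. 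Thus $Z^{K}$ is a finite nonempty disjoint union of $(n-1)$-connected spaces, and the collapse map $Z^{K}\to\pi_{0}(Z^{K})$ onto a finite nonempty discrete set is $n$-connected.

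Next I would pass to the $(n+1)$-fold join $X=Z^{*(n+1)}$, still a $G$-finite $G$-CW-complex; its cell stabilizers are intersections of $G$-conjugates of cell stabilizers of $Y$, hence lie in $\calF$, and its isotropy groups still generate $\calF$. For $K\in\calF$ we have $X^{K}=(Z^{K})^{*(n+1)}$, which maps $n$-connectedly to $(\pi_{0}(Z^{K}))^{*(n+1)}$ (an $(n+1)$-fold join of $n$-connected maps is $n$-connected), and the target, being the join of $n+1$ nonempty discrete spaces, is $(n-1)$-connected; hence $X^{K}$ is $(n-1)$-connected. Therefore $X$ satisfies conditions (1) and (2) of $\calF$-$n$-goodness, while condition (3) holds for free: for every cell $\sigma$ the stabiliser $G_{\sigma}$ lies in $\calF$, so $\calF\cap G_{\sigma}$ is the family of all subgroups of $G_{\sigma}$, which is modelled by a point, so $G_{\sigma}$ is of type $(\calF\cap G_{\sigma})$-$\f{m}$ for every $m$. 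So $X$ is a $G$-finite $\calF$-$n$-good complex whose isotropy groups generate $\calF$; by Remark~\ref{remark:Fngood}(2) its (necessarily $G$-finite) $n$-skeleton is the $n$-skeleton of a model for $E_{\calF}G$, so $G$ is of type $\ff{n}$. Equivalently, one may feed $X$ together with the one-element filtration $\{X\}$ into Theorem~\ref{Browns:homotopy:Criterium}: it has finite $n$-type, it is \esstriv{k}{\calF} for each $k<n$ because $X^{K}$ is $(n-1)$-connected, and $X$ has $G$-finite $0$-skeleton, so statement~\eqref{Brown:Criterium:02} holds and~\eqref{Brown:Criterium:01} follows.

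The step I expect to be the main obstacle is the connectivity bookkeeping around the join. The induced complex $Z=G\times_{H}Y$ is by itself \emph{not} $\calF$-$n$-good: $Z^{K}$ is generally disconnected, with one $(n-1)$-connected component for each of the at most $[G:H]$ cosets in $(G/H)^{K}$, so the low-dimensional finiteness is right but $\pi_{0}$ is wrong; the $(n+1)$-fold join is precisely what turns this controlled $\pi_{0}$ discrepancy into genuine $(n-1)$-connectedness while preserving $G$-finiteness, and justifying it cleanly requires the standard fact that joins raise connectivity. If one prefers to avoid that fact, an alternative is to first enlarge $Y$ to a model $\bar{Y}$ for $E_{\calF}H$ with $\bar{Y}^{(n)}=Y$ — which exists by Remark~\ref{remark:Fngood}(2) applied to $H$ — and work with $(G\times_{H}\bar{Y})^{*(n+1)}$; then the relevant fixed point sets are honestly homotopy equivalent to joins of discrete sets (only homotopy invariance of the join is used), and although the resulting complex is no longer $G$-finite, its $n$-skeleton still is, which is all that Remark~\ref{remark:Fngood}(2) needs.
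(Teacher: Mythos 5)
The crucial step that fails is the assertion that $X = Z^{*(n+1)}$ is a $G$-finite $G$-CW-complex (and likewise the fallback claim at the end that its $n$-skeleton is $G$-finite). The join of two $G$-finite $G$-CW-complexes is generally \emph{not} $G$-finite, already in its $1$-skeleton: the join $A * B$ contains one $1$-cell $\{a\}*\{b\}$ for every pair $(a,b)\in A^{(0)}\times B^{(0)}$, and the $G$-orbits of these under the diagonal action are in bijection with double cosets. If $A^{(0)}$ contains an orbit $G/L_1$ and $B^{(0)}$ an orbit $G/L_2$, the corresponding join lines fall into $|L_1\backslash G/L_2|$ orbits, which is typically infinite. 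Concretely: take $G=\mathbb{Z}$, $H=2\mathbb{Z}$, $\calF$ the trivial family, and $Y=\mathbb{R}$; then $Z=G\times_H Y\cong \mathbb{R}\sqcup\mathbb{R}$ with a free $\mathbb{Z}$-action, and the $1$-skeleton of $Z*Z$ already has infinitely many $\mathbb{Z}$-orbits of edges. So $X^{(n)}$ is not $G$-finite for any $n\geq 1$, and Remark~\ref{remark:Fngood}(2) cannot be invoked to conclude that $G$ is of type $\ff{n}$. The ``alternative'' construction at the end of your proposal suffers from exactly the same defect.

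The homotopical bookkeeping (the fixed-point decomposition $Z^K=\coprod_{gH\in(G/H)^K} Y^{g^{-1}Kg}$, the compatibility of joins with fixed points, and the connectivity of the $(n+1)$-fold join) is all fine; the problem is purely about equivariant finiteness, which is the whole content of $\ff{n}$. One \emph{could} salvage the strategy by observing that the $0$-skeleton of $Z^{*(n+1)}$ is still $G$-finite, then filtering $Z^{*(n+1)}$ by $G$-finite subcomplexes and invoking Theorem~\ref{Browns:homotopy:Criterium} in both directions (first $(1)\Rightarrow(2)$ for $H$, then $(2)\Rightarrow(1)$ for $G$) — but at that point the join is superfluous, and one has essentially reconstructed the paper's argument. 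Indeed, the paper's proof never attempts a direct finite construction: it fixes the $n$-skeleton of an arbitrary model for $E_\calF G$, arranges $G$-finite $0$-skeleton (using the already established $\ff{0}$ case), filters it, restricts the whole setup to $H$ (using that $[G:H]<\infty$ preserves finiteness of skeleta), and applies Brown's criterion twice. You should take the same route rather than trying to produce a $G$-finite $\calF$-$n$-good complex explicitly.
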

\begin{proof}
That $H$ is $\ff{0}$ is equivalent to the existence of a finite collection of subgroups $\calP$ of $H$  such that any element of $\calF$ is up to conjugation a subgroup of an element of $\calP$. Hence if $H$ is $\ff{0}$ then $G$ is $\ff{0}$.

Suppose $n>0$. Let $X$ be the $n$-skeleton of a model for the classifying space $E_\calF G$. Since $H$ is $\ff{0}$, $G$ is $\ff{0}$ and hence $X$ can be assumed to have $G$-finite $0$-skeleton. The set of $G$-orbits of cells of $X$ can be well ordered in such a way that for any cell $\sigma$, its boundary intersects only cells that belong to $G$-orbits less than $G\sigma$ in the order. It follows that there is a filtration $\{X_\alpha\}_{\alpha\in I}$ of $X$ of finite $n$-type by $G$-subcomplexes. Since $H$ is finite index in $G$, $X$ is the $n$-skeleton of a model for $E_\calF H$ with $H$-finite $0$-skeleton and  $\{X_\alpha\}_{\alpha\in I}$ is a filtration of finite $n$-type by $H$-subcomplexes. Since $H$ is $\ff{n}$, \cref{Browns:homotopy:Criterium} implies that for each $k<n$ the filtration $\{X_\alpha\}_{\alpha \in I}$ is \esstriv{k}{\calF}. Then, by \cref{Browns:homotopy:Criterium} again, $G$ is $\ff{n}$.
\end{proof}

For a group  $G$ and $\calF$ the family of finite subgroups, property $\ff{n}$ is denoted by $\underline{\mathrm{F}}_n$.  Corollary~\ref{cor:Luck} below is a result of W.L\"uck that is recovered from  Corollary~\ref{Corrollary:finite:subgroups}. We recall that the analogous statement for  property $\underline{\mathrm{FP}}_n$ can be found in~\cite[Lemma~3.1]{KMPN09}. For a subgroup $H\leq G$, let $N(H)$ denote the normalizer of $H$ in $G$.

\begin{corollary}\label{cor:Luck}\cite[Theorem~4.2]{Lu00}
Let $G$ be a group and let $n\geq0$. Then $G$ is $\underline{\mathrm{F}}_n$ if and only if $G$ is $\underline{\mathrm{F}}_0$ and $N_G(H)$ is $\f{n}$ for every $H\in \calF$.
\end{corollary}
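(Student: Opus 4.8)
The plan is to apply \cref{Corrollary:finite:subgroups} to $G$ with $\calF$ the family of finite subgroups and with $X=\underline{E}G$, and then to re-apply \cref{Browns:homotopy:Criterium} with the \emph{trivial} family to each normalizer $N_G(H)$ acting on the fixed point set $X^H$. (The case $n=0$ is immediate, since $\mathrm{F}_0$ is vacuous and the condition reduces to $G$ being $\underline{\mathrm{F}}_0$, so assume $n\ge 1$.) The first point to record is that $\underline{E}G$ is an $\calF$-$n$-good $G$-CW-complex: conditions (1) and (2) hold because $(\underline{E}G)^H$ is contractible for every finite $H$, and condition (3) is automatic because each cell stabiliser $G_\sigma$ is finite, so $\calF\cap G_\sigma$ is the family of all subgroups of $G_\sigma$ and a point is a model for $E_{\all}G_\sigma$. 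Since $G$ is $\underline{\mathrm{F}}_0$ — hypothesised in the backward implication and a consequence of $\underline{\mathrm{F}}_n$ in the forward one — we may take $X$ with $G$-finite $0$-skeleton; well-ordering the $G$-orbits of cells so that the boundary of each cell meets only earlier orbits and letting $X_\alpha$ range over the finite, boundary-closed unions of orbits yields a filtration $\{X_\alpha\}_{\alpha\in I}$ of finite $n$-type by $G$-subcomplexes.

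Next I would analyse the fixed point sets. For a finite subgroup $H\le G$ the complex $X^H$ is contractible, the action of $N_G(H)$ on it has finite isotropy (the stabiliser of $x\in X^H$ being $G_x\cap N_G(H)$), and its cell stabilisers are finite, hence of type $\mathrm{F}_m$ for all $m$; consequently $X^H$ is a $\{1\}$-$n$-good $N_G(H)$-CW-complex, where $\{1\}$ denotes the trivial family of $N_G(H)$. A counting argument using the finiteness of the cell stabilisers shows that $(X_\alpha^H)^{(m)}$ is $N_G(H)$-finite whenever $X_\alpha^{(m)}$ is $G$-finite, so $\{X_\alpha^H\}_{\alpha\in I}$ is a filtration of $X^H$ of finite $n$-type by $N_G(H)$-subcomplexes, and in particular $X^H$ has $N_G(H)$-finite $0$-skeleton.

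Now run the two criteria. By \cref{Corrollary:finite:subgroups} applied to $G$, the group $G$ is $\underline{\mathrm{F}}_n$ if and only if the filtration $\{X_\alpha\}$ is $\pi_k$-$\calF$-essentially trivial for every $k<n$; unwinding the definition, and using that $G$ being $\underline{\mathrm{F}}_0$ gives only finitely many conjugacy classes of finite subgroups (so that a single cofinal index can be chosen to serve all of them, conjugation identifying the corresponding fixed-point filtrations), this is equivalent to: for every finite $H\le G$ the filtration $\{X_\alpha^H\}$ is $\pi_k$-essentially trivial with respect to the trivial family of $N_G(H)$, for every $k<n$. On the other hand, \cref{Browns:homotopy:Criterium}, applied to $N_G(H)$ acting on the $\{1\}$-$n$-good complex $X^H$ with $N_G(H)$-finite $0$-skeleton and the filtration $\{X_\alpha^H\}$, says exactly that $N_G(H)$ is of type $\mathrm{F}_n$ if and only if $\{X_\alpha^H\}$ is $\pi_k$-essentially trivial for the trivial family, for all $k<n$. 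Combining the two equivalences: $G$ is $\underline{\mathrm{F}}_n$ if and only if $G$ is $\underline{\mathrm{F}}_0$ and $N_G(H)$ is of type $\mathrm{F}_n$ for every finite $H\le G$, the case $H=1$ recording that $N_G(1)=G$ is $\mathrm{F}_n$.

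The step I expect to be the crux is the recognition that \cref{Browns:homotopy:Criterium} may be fed the contractible fixed point set $X^H$ together with the \emph{trivial} family of $N_G(H)$ — this is legitimate precisely because the cell stabilisers are finite, hence of type $\mathrm{F}_m$ for all $m$, so that condition (3) of $\{1\}$-$n$-goodness holds — and that the resulting assertion about the \emph{ordinary} finiteness property $\mathrm{F}_n$ of $N_G(H)$ is nothing but the $H$-component of the $\pi_k$-$\calF$-essential triviality condition that \cref{Corrollary:finite:subgroups} produces for $G$ itself. The remaining ingredients — the $N_G(H)$-finiteness of the fixed-point skeleta (the counting argument) and the implication $\underline{\mathrm{F}}_n\Rightarrow\underline{\mathrm{F}}_0$ needed to invoke \cref{Corrollary:finite:subgroups} in the forward direction — are routine.
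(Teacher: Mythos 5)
Your proposal is correct and follows essentially the same route as the paper's own proof: take a model $X$ for $\underline{E}G$ with $G$-finite $0$-skeleton and a finite-$n$-type filtration, observe that each $X^H$ with its $N_G(H)$-action is $n$-good for the trivial family (finite cell stabilisers) with induced filtration of finite $n$-type, and then translate between the essential-triviality condition from \cref{Corrollary:finite:subgroups} applied to $G$ and that from \cref{Browns:homotopy:Criterium} applied to each $N_G(H)$, using finitely many conjugacy classes of finite subgroups to choose a common index. The only cosmetic difference is that the paper re-invokes \cref{Corrollary:finite:subgroups} for $N_G(H)$ whereas you invoke \cref{Browns:homotopy:Criterium} directly; these agree because $X^H$ has $N_G(H)$-finite $0$-skeleton.
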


\begin{proof}
As in the proof of \Cref{Corollary:finite:extensions}, given any model $X$ for $\underline{E} G$, we can construct a filtration $\{X_\alpha\}_{\alpha \in I}$ of $X$ of finite $n$-type by $G$-subcomplexes. On the other hand we can easily verify that the normalizer $N_G(H)$ acts on $X^H$.  Moreover, $X^H$ is a model for the classifying space $\underline{E} H$ and the induced filtration $\{ X_\alpha^H \}_{\alpha \in I}$ is of finite $n$-type (see \cite[Lemma~1.3]{Lu00}). Since every finite group is of type $\f{\infty}$, we conclude that $X^H$ is $n$-good with respect to the trivial family.

Assume $G$ is $\underline{\mathrm{F}}_n$. In particular, $G$ is $\underline{\mathrm{F}}_0$.  By \Cref{Corrollary:finite:subgroups} for each $k<n$ we have that the filtration $\{X_\alpha\}_{\alpha \in I}$ is \esstriv{k}{\calF}, where $\calF$ is the family of finite subgroups. Hence, for each $H\in \calF$, $\{X_\alpha^H\}_{\alpha\in I}$ is $\pi_k$-essentially trivial for all $k<n$, thus by \Cref{Corrollary:finite:subgroups}, we have that $N_G(H)$ is $\f{n}$.

Suppose that $G$ is $\underline{\mathrm{F}}_0$ and $N_G(H)$ is $\f{n}$ for every $H\in \calF$. Let $\alpha$ be an element of $I$. Fix $H\in \calF$, then by \Cref{Corrollary:finite:subgroups},  there exists $\beta_H\geq \alpha$ (that depends on $H$ and $\alpha$) such that the homomorphism $\pi_k(X_\alpha^H)\to \pi_k(X_{\beta_H}^H)$ induced by inclusion vanishes for all $k<n$. Since $G$ is $\underline{\mathrm{F}}_0$, we know that $G$ only has finitely many conjugacy classes of finite subgroups (this is a particular feature of finite subgroups), hence the maximum $\beta$ running over $H\in \calF$ of all $\beta_H$ will be finite. Such $\beta$ will satisfy the definition of \esstriv{k}{\calF} for the filtration of $X$. Hence by \Cref{Corrollary:finite:subgroups} $G$ is $\underline{\mathrm{F}}_n$.
\end{proof}

\subsection*{Organization.}  Section~\ref{sec:preliminaries} contains preliminaries including a consequence of a result of L\"uck known as the Haeflieger Construction.  Section~\ref{sec:WitnessF0} describes a construction of witnesses of property $\ff{0}$ with particular properties, a nontrivial technique whose result is summarized as Proposition~\ref{prop:FF0andAAC}. The proofs of Theorem~\ref{Browns:homotopy:Criterium} and Theorem~\ref{Brown:Corollary:ACC} are contained in Section~\ref{sec:maintheorem}. This section also describes an alternative proof of the criterion for $\ffpn$ by Fluch and Witzel. The last section of the article contains some final remarks and a question regarding Abels's groups.

\subsection*{Acknowledgements}
 The authors thank the anonymous referee for comments and corrections, and Qayum Khan for comments.  E.M.P acknowledges funding by the Natural Sciences and Engineering Research Council of Canada, NSERC. L.J.S.S was funded by the Mexican Council of Science and Technology via the program \textit{Estancias postdoctorales en el extranjero.}

\section{Preliminaries}\label{sec:preliminaries}

\subsection{Classifying spaces for families and finiteness properties}
The following proposition can be obtained using equivariant obstruction theory. For completeness we include a proof using the definition of classifying spaces.

\begin{proposition}\label{prop:homotopy:classifying:skeletons}
Let $Y$ be an $n$-dimensional $\calF$-$G$-CW-complex and let $X$ be a model for $E_\calF G$. Then, there there exists a $G$-map $f\colon Y\to X^{(n)}$. Moreover any two $G$-maps $f_1,f_2\colon Y \to X^{(n+1)}$  are $G$-homotopic.
\end{proposition}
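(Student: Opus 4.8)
The plan is to exploit the universal property that defines $E_\calF G$ directly, rather than invoking obstruction theory. Recall that a model $X$ for $E_\calF G$ is an $\calF$-$G$-CW-complex with the property that for every $\calF$-$G$-CW-complex $Y$ there is a $G$-map $Y \to X$, unique up to $G$-homotopy. The issue is that the universal property produces a $G$-map into $X$, not into the $n$-skeleton $X^{(n)}$; so the work is to push such a map down onto the skeleton.

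First I would take the $G$-map $g\colon Y \to X$ supplied by the universal property. Since $Y$ is $n$-dimensional, I want to homotope $g$ off the cells of dimension $>n$. The point is that $g$ is cellular (or can be made $G$-cellular by equivariant cellular approximation, using that $Y$ is a $G$-CW-complex and $X$ is a $G$-CW-complex whose isotropy groups lie in $\calF$), hence $g(Y) \subseteq X^{(n)}$ already once $g$ is cellular. Concretely: replace $g$ by a $G$-homotopic $G$-cellular map; then the image of the $n$-dimensional complex $Y$ lands in $X^{(n)}$, giving the desired $f\colon Y \to X^{(n)}$. The only subtlety here is making sure equivariant cellular approximation applies, which it does because the action on each orbit of cells factors through a coset space and the relevant obstruction groups vanish cell-by-cell for CW-pairs; but I can sidestep even this by the following cleaner argument, which is the one I would actually write.

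For the cleaner argument, and for the uniqueness statement, I would use that $X^{(n)}$ is itself an $\calF$-$G$-CW-complex (its isotropy groups are among those of $X$, hence in $\calF$), together with the fact that the inclusion $\iota\colon X^{(n)} \hookrightarrow X$ has the following lifting-type property through dimension $n$: any $G$-map from an at most $n$-dimensional $\calF$-$G$-CW-complex into $X$ is $G$-homotopic to one factoring through $X^{(n)}$, and any two such factorizations into $X^{(n+1)}$ become $G$-homotopic in $X^{(n+1)}$. To establish existence, apply the universal property of $X$ to get $g\colon Y \to X$; then inductively push $g$ off the interiors of cells of dimension $n+1, n+2, \dots$ in decreasing order using that $Y$ is finite-dimensional and that pushing off a top cell only requires $\pi_{\dim}(D^{\dim}, S^{\dim - 1}) $-type extensions which are automatic — more precisely, use that $(X, X^{(n)})$ is $n$-connected in the strong $G$-equivariant sense coming from the CW structure, so a $G$-map from an $n$-dimensional complex compresses into $X^{(n)}$. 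For uniqueness: given $f_1, f_2\colon Y \to X^{(n+1)}$, the universal property of $X$ already gives a $G$-homotopy $H\colon Y \times [0,1] \to X$ between $\iota f_1$ and $\iota f_2$ (since both represent the unique-up-to-$G$-homotopy map $Y \to X$); now $Y \times [0,1]$ is $(n+1)$-dimensional, so compress $H$ into $X^{(n+1)}$ rel $Y \times \{0,1\}$ by the same pushing-off-top-cells procedure, yielding the required $G$-homotopy in $X^{(n+1)}$.

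The main obstacle is making the "compress a $G$-map from a $d$-dimensional $\calF$-$G$-CW-complex into $X^{(d)}$" step rigorous and equivariant: one must push the map off the open $(d{+}j)$-cells of $X$ for $j \geq 1$ one orbit at a time, and at each stage the obstruction to doing so $G$-equivariantly lives in a relative homotopy set that vanishes because the source has dimension $\leq d$ while the cell being avoided has dimension $\geq d+1$ — this is exactly equivariant cellular approximation, and I would either cite it or spell out the one-cell-orbit induction, being careful that the homotopy is taken rel the lower skeleta already mapped in. Once that lemma is in hand, both halves of the proposition follow formally from the universal property of $E_\calF G$ as sketched above, with $d = n$ for existence and $d = n+1$ for uniqueness.
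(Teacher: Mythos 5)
Your proposal is correct and is essentially the same as the paper's: both obtain the map $Y\to X$ (and the homotopy $Y\times I\to X$) from the universal property of $E_\calF G$, and then invoke $G$-equivariant cellular approximation (rel $Y\times\{0,1\}$ in the uniqueness step) to compress into $X^{(n)}$ (resp.\ $X^{(n+1)}$), using the dimension bounds $\dim Y = n$ and $\dim(Y\times I)=n+1$. The ``pushing-off-top-cells'' alternative you sketch is just the proof of equivariant cellular approximation unwound, so it is not a genuinely different route.
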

\begin{proof}
We know that there exists a $G$-map $f\colon Y\to X$ unique up to $G$-homotopy. Using the $G$-equivariant cellular aproximation theorem, $f$ is $G$-homotopic to a cellular $G$-map $\bar f\colon Y\to X$. Therefore the image of $\bar f$ is contained in $X^{(n)}$. 

On the other hand, given two cellular $G$ maps $f_1,f_2\colon Y \to X^{(n+1)}$, there exists a $G$-homotopy $H\colon Y\times I \to X$ between $f_1$ and $f_2$ considered as functions with codomain $X$, which is homotopic to a cellular map via a homotopy rel.  $Y\times \{0\}\cup Y\times \{1\}$. Therefore we have that $f_1,f_2$ are homotopic as functions from $Y$ to $X^{(n+1)}$.
\end{proof}

\subsection{Bredon modules}

Let $G$ be a group and let $\calF$ be a family of subgroups. The \emph{restricted orbit category} $\orf{G}$ is the category whose objects are homogenous spaces (also called orbits) $G/H$ with $H\in \calF$, and whose morphisms are $G$-maps with the canonical action of $G$ in the homogenous space $G/H$. The set of $G$-maps between the orbits $G/H$ and $G/K$ is denoted by $[G/H, G/K]_G$. 

A (contravariant) \emph{$\orf{G}$--module} is a contravariant functor $\orf{G}\to Ab$, where $Ab$ is the category of abelian groups. A \emph{morphism} $M \to N$ of $\orf{G}$--modules  is a natural transformation between the underlying functors. Since  the category of $\orf{G}$--modules is a category of functors with target an abelian category, it follows that it is an abelian category itself. In fact, for example a morphism $M\to N$ is surjective if $M(G/H)\to N(G/H)$ is surjective for all $H\in \calF$. We can also define injectiveness, exactness, etc. in an analogous way. Also the category of $\orf{G}$--modules has enough projectives. Free $\orf{G}$--modules are direct sums of modules of the form $\dbZ[-,G/H]_G$ for some $H\in\calF$. We say that a free module is finitely generated if it isomorphic to $\bigoplus_{i=0}^{m}\dbZ[-,G/H_i]_G$ with $H_1,\ldots, H_m\in \calF$. An $\orf{G}$--module $M$ is said to be finitely generated if there exists a finitely generated free $\orf{G}$--module that surjects onto $M$.

Given a $G$-CW-complex $X$ with isotropy groups on a family $\calF$ we have the Bredon cellular chain  complex, which is a chain complex of free $\orf{G}$--modules $C_i(X)$ with $i\geq 0$, such that $C_i(X)(G/H)\cong C_i(X^H)$ where the latter is the usual cellular chain group. Also $C_i(X)\cong \bigoplus \dbZ[-,G/H_i]$, where the sum runs over the set of $G$-orbits of $i$-cells and $H_i$ is the isotropy group of the corresponding $i$-cell (so that $H_i$ is only well defined up to conjugation). The \emph{Bredon homology $\orf{G}$-modules $H_*^\calF(X)$} are defined to be the homology groups of the Bredon cellular chain complex, so that $H_*^\calF(X)(G/H)=H_*(X^H)$ for all $H\in \calF$. We can analogously define the Bredon cellular chain complex (resp. Bredon homology groups) of a $G$-CW-pair $(X,A)$.

\begin{remark}
Note that these Bredon homology $\orf{G}$-modules $H_*^\calF(X)$ are different from those defined by Bredon in~\cite{Br67}. Our definition of $H_*^\calF(X)$ is a particular case of \cite[Definition 13.9]{Lu89}, which coincides with the definition of Bredon homology in~\cite{FW13}.
\end{remark}

\subsection{The Haefliger construction for families}

The following construction for the trivial family is due to Haefliger~\cite{Ha92}, which was later generalized for arbitrary families by  L\"uck~\cite[Proof~of~theorem~3.1]{Lu00}.

\begin{theorem}[The Haefliger construction for families]\label{thm:haefliger}
Let $G$ be a group. Let $\calF$ and $\calG$ be families of subgroups of $G$ such that $\calF\subseteq \calG$. Consider a $\calG$-$G$-CW-complex $X$. For each cell $\sigma$ of $X$, fix models $X_\sigma$ for $E_{\calF\cap G_\sigma} G_\sigma$. Then, for each $n\geq 0$ there exists an $\calF$-$G$-CW-complex $\hat X_n$ and a $G$-map $f_n\colon \hat X_n\to X^{(n)}$ such that:
\begin{enumerate}
    \item We have $\hat X_{n-1}\subseteq \hat X_n$ and $f_n$ restricted to $\hat X_{n-1}$ is $f_{n-1}$.
    \item For every open simplex $\sigma$ of $X^{(n)}$, $f_n^{-1}(\sigma)$ is a model for $E_{\calF\cap G_\sigma} G_\sigma$.
    \item For all $H\in \calF$, $f_n^H\colon \hat X_n^H\to (X^{(n)})^H$ is a (nonequivariant) homotopy equivalence.
    \item We have a $G$-isomorphism $f_n^{-1}(\sigma)=X_\sigma \times \sigma$.
    \end{enumerate}
\end{theorem}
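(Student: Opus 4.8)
The plan is to build the complexes $\hat X_n$ by induction on $n$, gluing in the chosen models $X_\sigma$ over each cell $\sigma$ of $X^{(n)}$ in a way compatible with the cell attachments of $X$. The base case $n=0$ is essentially immediate: $X^{(0)}$ is a disjoint union of orbits $G/G_v$, and we set $\hat X_0 = \coprod_v G \times_{G_v} X_v$, with $f_0$ the evident $G$-map collapsing each $X_v$ to the corresponding vertex. Since $X_v$ is a model for $E_{\calF\cap G_v}G_v$, conditions (2), (3) and (4) hold at level $0$ by construction, using that for $H\in\calF$ the fixed set $(G\times_{G_v}X_v)^H$ is a disjoint union, over the cosets $gG_v$ with $g^{-1}Hg\le G_v$, of copies of $X_v^{g^{-1}Hg}$, each of which is contractible, matching $(X^{(0)})^H$ which is a disjoint union of points.

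For the inductive step, suppose $\hat X_{n-1}$ and $f_{n-1}$ are constructed. Each $G$-orbit of $n$-cells of $X$ contributes an attaching $G$-map $\varphi\colon G\times_{G_\sigma}\partial D^n \to X^{(n-1)}$, where $D^n$ is an $n$-disk with trivial $G_\sigma$-action (so $\partial D^n = S^{n-1}$). I would first lift $\varphi|_{\partial D^n}\colon S^{n-1}\to X^{(n-1)}$ to a map $\tilde\varphi\colon S^{n-1}\to \hat X_{n-1}$ with $f_{n-1}\circ\tilde\varphi \simeq \varphi|_{\partial D^n}$; such a lift exists because $f_{n-1}$ is a (nonequivariant) homotopy equivalence on all fixed sets, hence a homotopy equivalence after restricting to the relevant subspaces, and one can even arrange $\tilde\varphi$ to be $G_\sigma$-equivariant for the action of $G_\sigma$ on $S^{n-1}$ obtained by composing with the (trivial on $D^n$) structure — more precisely, one uses the $G_\sigma$-equivariant homotopy lifting property relative to $\hat X_{n-1}^{\calF\cap G_\sigma}$. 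Then I set
\[
\hat X_n \;=\; \hat X_{n-1}\ \cup\ \coprod_\sigma\, G\times_{G_\sigma}\bigl(X_\sigma\times D^n\bigr),
\]
glued along $G\times_{G_\sigma}(X_\sigma\times S^{n-1})$ via the map $(x,s)\mapsto \tilde\varphi(s)$ (which factors through $\hat X_{n-1}$ since $X_\sigma$ is contractible, but we keep the product structure to record (4)), and extend $f_{n-1}$ to $f_n$ by sending $G\times_{G_\sigma}(X_\sigma\times D^n)$ to $G\times_{G_\sigma}D^n\subseteq X^{(n)}$ via projection. Conditions (1) and (4) are then built in, and (2) follows since the preimage of the open cell $G/G_\sigma\times\mathrm{int}(D^n)$ is $G\times_{G_\sigma}(X_\sigma\times\mathrm{int}(D^n))\cong G\times_{G_\sigma}X_\sigma\times\mathrm{int}(D^n)$, which is a model for $E_{\calF\cap G_\sigma}G_\sigma$ (up to the contractible factor).

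The main obstacle is condition (3): that $f_n^H\colon \hat X_n^H\to (X^{(n)})^H$ is a homotopy equivalence for all $H\in\calF$. The plan is to argue by a Mayer–Vietoris / gluing argument over the filtration by skeleta, using that the fixed-point functor commutes with pushouts along cofibrations. Taking $H$-fixed points of the pushout defining $\hat X_n$ from $\hat X_{n-1}$ expresses $\hat X_n^H$ as a homotopy pushout of $\hat X_{n-1}^H$, the fixed sets of the attached pieces $(G\times_{G_\sigma}(X_\sigma\times D^n))^H$, and their intersections; correspondingly $(X^{(n)})^H$ is a homotopy pushout of $(X^{(n-1)})^H$, the fixed cells, and intersections. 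The map $f_n^H$ is a map of homotopy pushouts; by the inductive hypothesis it is an equivalence on one corner, and on the attached pieces it restricts, cell by coset, to maps of the form $X_\sigma^{K}\times D^n \to D^n$ (with $K$ a conjugate of $H$ lying in $G_\sigma$, so $K\in\calF\cap G_\sigma$ and $X_\sigma^K$ contractible) and $X_\sigma^K\times S^{n-1}\to S^{n-1}$, which are homotopy equivalences since $X_\sigma^K$ is contractible. The gluing lemma for homotopy pushouts then gives that $f_n^H$ is a homotopy equivalence. The delicate point to get right is the bookkeeping of which conjugates $K$ of $H$ can occur and the verification that the squares involved are genuinely homotopy pushouts (i.e. that the relevant inclusions are cofibrations on fixed sets), which follows from the $G$-CW structure; I would cite the analogous passage in L\"uck~\cite{Lu00} for the detailed verification.
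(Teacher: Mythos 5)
Your base case and your homotopy-pushout argument for condition (3) are sound, but the inductive step has a gap at the point where you construct the attaching map, and it is precisely the gap that the Haefliger/L\"uck construction is designed to fill. You propose to lift the base attaching map $\varphi_\sigma|_{S^{n-1}}$ to a $G_\sigma$-equivariant map $\tilde\varphi\colon S^{n-1}\to\hat X_{n-1}$, with $G_\sigma$ acting \emph{trivially} on $S^{n-1}$, and then to attach $G\times_{G_\sigma}(X_\sigma\times D^n)$ along $[g,(x,s)]\mapsto g\tilde\varphi(s)$. For that induced $G$-map to be well defined on $G\times_{G_\sigma}(X_\sigma\times S^{n-1})$, the map $\tilde\varphi$ must take values in the fixed set $\hat X_{n-1}^{G_\sigma}$. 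But $\hat X_{n-1}$ is an $\calF$-$G$-CW-complex, so every isotropy group of a point of $\hat X_{n-1}$ lies in $\calF$; since $\calF$ is closed under passing to subgroups, $\hat X_{n-1}^{G_\sigma}\neq\emptyset$ forces $G_\sigma\in\calF$. The entire point of the theorem, however, is the case $\calF\subsetneq\calG$: a typical cell $\sigma$ of $X$ has $G_\sigma\in\calG\setminus\calF$, and then $\hat X_{n-1}^{G_\sigma}=\emptyset$, so no $G_\sigma$-equivariant $\tilde\varphi$ with trivial source action can exist. Decoupling the attaching map from the $X_\sigma$ factor cannot work.

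The construction in the paper (following L\"uck~\cite{Lu00}) produces instead a $G_\sigma$-map $\hat\varphi_\sigma\colon\partial\sigma\times X_\sigma\to\hat X_{n-1}$ over $\varphi_\sigma$ that genuinely depends on the $X_\sigma$ coordinate; this is consistent equivariantly because both $\partial\sigma\times X_\sigma$ and $\hat X_{n-1}$, viewed as $G_\sigma$-spaces, have isotropy only in $\calF\cap G_\sigma$ and in particular neither has $G_\sigma$-fixed points when $G_\sigma\notin\calF$. Concretely, one forms the pullback $\hat X_{n-1}\times_{X^{(n-1)}}\partial\sigma$ as a $G_\sigma$-space, checks (using the inductive instance of (3), plus a CW-replacement) that its $K$-fixed sets for $K\in\calF\cap G_\sigma$ have the homotopy type of $\partial\sigma$, and then uses the universal property of $X_\sigma$ as a model for $E_{\calF\cap G_\sigma}G_\sigma$ to obtain a $G_\sigma$-map $\partial\sigma\times X_\sigma\to\hat X_{n-1}\times_{X^{(n-1)}}\partial\sigma\to\hat X_{n-1}$. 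Once this replaces your $\tilde\varphi$-based gluing, the remainder of your outline --- in particular, the gluing-lemma argument deducing condition (3) from the contractibility of $X_\sigma^K$ for $K\in\calF\cap G_\sigma$ --- is essentially the argument the paper has in mind.
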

\begin{proof}[Sketch of the construction proving Theorem
~\ref{thm:haefliger}]
The complexes $\hat X_n$ are constructed as follows. For each cell $\sigma$ of $X$, fixed a model $X_\sigma$ for $E_{\calF\cap G_\sigma} G_\sigma$. The construction is by induction, where $\hat X_0$ is obtained by replacing each $0$-cell $\sigma$ of $X^{(0)}$ by the chosen model $X_\sigma$ and defining $f_0\colon \hat X_0 \to X_0$ as the natural projection. Suppose that $\hat X_k$ has been constructed. Let $\sigma$ be an $k+1$ dimensional cell of $X$. Then attaching map $\varphi_\sigma\colon \partial \sigma \to X^{(n)}$ induces a map $\hat \varphi_\sigma\colon \partial( \sigma) \times X_\sigma \to \hat X_k$. Then $\hat X_{k+1}$ is obtained from $\hat X_k$ by attaching the spaces $\sigma \times X_\sigma$ via the attaching maps $\hat \varphi_\sigma$. The map $f_n\colon \hat X_n \to X^{(n)}$ is induced by the projection $(\sigma \times X_\sigma) \to \sigma$.\end{proof}

\begin{corollary}\label{reduction:F:isotropy}
Assume $X$ is  $\calF$-$n$-good. For each $p$-cell $\sigma$ of $X$, fix a $G_\sigma$-witness  $X_\sigma$ for $\ff{n-p}$. Let $\calG$ be the family of subgroups generated by the isotropy groups of $X$ and assume $\calF\subseteq\calG$.
Let $\hat X=\bigcup_{i=0}^{\infty} \hat X_i$, where $\hat X_i$ is given by Theorem~\ref{thm:haefliger} according to the chosen models $X_\sigma$. The following holds:
\begin{enumerate}
    \item  The complex $\hat X$ is $\calF$-$n$-good and all its isotropy groups belong to $\calF$. In particular $\hat X^{(n)}$ is the $n$-skeleton of a model for $E_\calF G$.
    \item If $\{X_\alpha\}_{\alpha \in I}$ is a filtration of $X$ of finite $n$-type, then $\{\hat X_\alpha\}_{\alpha \in  I}$ is a filtration of $\hat X$ of finite $n$-type.
    \item $\{X_\alpha\}_{\alpha \in I}$ is  $\pi_k$-$\calF$-essentially trivial if and only if $\{\hat X_\alpha\}_{\alpha \in I}$ is \textit{ $\pi_k$-$\calF$-essentially trivial}.
\end{enumerate}
\end{corollary}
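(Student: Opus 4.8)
The plan is to verify each of the three claims using the properties of the Haefliger construction recorded in Theorem~\ref{thm:haefliger}, together with the definition of $\calF$-$n$-good.

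\textbf{Part (1).} By construction, every open cell of $\hat X$ of the form appearing in $\hat X_n$ lies in some $f_n^{-1}(\sigma) = X_\sigma \times \sigma$ with $X_\sigma$ a model for $E_{\calF\cap G_\sigma}G_\sigma$; hence the isotropy groups of cells of $\hat X$ lie in $\calF$ (they are isotropy groups of the $G_\sigma$-CW-complex $X_\sigma$, and these belong to $\calF\cap G_\sigma\subseteq\calF$). For the $\calF$-$n$-good conditions: condition (3) is automatic, since the stabiliser $G_\tau$ of a $p$-cell $\tau$ of $\hat X$ sits inside some $G_\sigma$ with $\sigma$ a cell of $X$ of dimension at most $p$, and $G_\sigma$ is of type $(\calF\cap G_\sigma)$-$\f{n-\dim\sigma}$, so $G_\tau$ is of type $(\calF\cap G_\tau)$-$\f{n-p}$ by restriction; one should remark here that in fact, after the Haefliger replacement, all cell stabilisers can be taken trivial if one wishes, but the weaker statement suffices. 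For conditions (1) and (2), I would invoke item (3) of Theorem~\ref{thm:haefliger}: for every $H\in\calF$ the map $f_n^H\colon \hat X_n^H\to (X^{(n)})^H$ is a homotopy equivalence. Since $X$ is $\calF$-$n$-good, $(X^{(n)})^H = (X^{(n-1)})^H$ is non-empty and $(n-1)$-connected (the $n$-skeleton of $X$ has the same $(n-1)$-connectivity of fixed point sets by cellular approximation of maps from spheres of dimension $\le n-1$), hence so is $\hat X_n^H = \hat X^{(n)}{}^H$. This gives conditions (1) and (2) for $\hat X$, and then Remark~\ref{remark:Fngood}(2) identifies $\hat X^{(n)}$ with the $n$-skeleton of a model for $E_\calF G$ because, by item (2) just established, the family generated by the isotropy groups of $\hat X$ is exactly $\calF$.

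\textbf{Part (2).} Given a filtration $\{X_\alpha\}_{\alpha\in I}$ of $X$ of finite $n$-type, define $\hat X_\alpha$ to be the $G$-subcomplex of $\hat X$ lying over $X_\alpha$, i.e. $\hat X_\alpha = \bigcup_i f_i^{-1}(X_\alpha^{(i)})$; these form a filtration of $\hat X$ since the $X_\alpha$ exhaust $X$ and the preimages are nested. For the finite $n$-type: a $G$-orbit of $p$-cells of $X_\alpha^{(n)}$ (of which there are finitely many) contributes the cells of $X_\sigma\times\sigma$ over a representative $\sigma$; the $n$-skeleton of $X_\sigma\times\sigma$ involves only the $(n-p)$-skeleton of $X_\sigma$, which is $G_\sigma$-finite because $X_\sigma$ was chosen to be a $G_\sigma$-witness for $\ff{n-p}$ (so its $(n-p)$-skeleton has compact orbit space). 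Summing over the finitely many $G$-orbits of cells of $X_\alpha^{(n)}$ of each dimension $p\le n$ shows $\hat X_\alpha^{(n)}$ is $G$-finite.

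\textbf{Part (3).} This is the step I expect to require the most care, although it is essentially a diagram chase. For each $\alpha$ and each $H\in\calF$ the square built from the inclusions $\hat X_\alpha\hookrightarrow\hat X_\beta$ and $X_\alpha\hookrightarrow X_\beta$ and the maps $f$ gives a commuting square of $H$-fixed point sets
\[
\begin{CD}
\hat X_\alpha^H @>>> \hat X_\beta^H \\
@VVV @VVV \\
X_\alpha^H @>>> X_\beta^H
\end{CD}
\]
in which the vertical maps are homotopy equivalences by Theorem~\ref{thm:haefliger}(3) applied to the filtration pieces (one needs that the restriction of the Haefliger construction to $X_\alpha$ is again a Haefliger construction for $X_\alpha$, which is immediate from the cell-by-cell nature of the construction). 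Passing to $\pi_k$ based at a point $x_0\in\hat X_\alpha^H$ and its image, the induced maps on homotopy groups fit into a commuting square with the vertical arrows isomorphisms; hence $\pi_k(\hat X_\alpha^H,x_0)\to\pi_k(\hat X_\beta^H,x_0)$ is zero if and only if $\pi_k(X_\alpha^H,\bar x_0)\to\pi_k(X_\beta^H,\bar x_0)$ is zero. Since $\calF$-essential triviality is a condition quantified over all $\alpha$, all $\beta\ge\alpha$ witnessing it, all $H\in\calF$, and all basepoints, and the indexing set $I$ is the same for both filtrations, the equivalence of the two essential-triviality statements follows. The mild subtlety to address is basepoint-dependence and the $k=0$ case, where one replaces ``zero homomorphism'' by ``constant function'' on $\pi_0$ and runs the same argument with the vertical maps being bijections of $\pi_0$.
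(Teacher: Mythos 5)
Your Parts (2) and (3) follow the paper's argument closely and are correct. There is, however, a genuine gap in Part (1), specifically in your verification of condition (3) of the definition of $\calF$-$n$-good for $\hat X$.

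You argue that because a $p$-cell $\tau$ of $\hat X$ has stabiliser $G_\tau$ sitting inside some $G_\sigma$ of type $(\calF\cap G_\sigma)$-$\f{n-\dim\sigma}$, the group $G_\tau$ is of type $(\calF\cap G_\tau)$-$\f{n-p}$ ``by restriction.'' This does not follow: finiteness properties of this kind do not in general pass to subgroups (already property $\f{1}$, finite generation, is not inherited by subgroups). Your aside that ``after the Haefliger replacement, all cell stabilisers can be taken trivial'' is also incorrect, and if it were true it would contradict the conclusion that $\hat X$ has fixed points $\hat X^H\ne\emptyset$ for nontrivial $H\in\calF$. The correct argument is simpler and you already have the needed ingredient in hand: you observed at the start of Part (1) that every isotropy group of $\hat X$ lies in $\calF$. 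Hence for any cell $\tau$ of $\hat X$, $G_\tau\in\calF$, so $\calF\cap G_\tau$ is the family of \emph{all} subgroups of $G_\tau$; a single point is then a model for $E_{\calF\cap G_\tau}G_\tau$, which shows $G_\tau$ is of type $(\calF\cap G_\tau)$-$\f{\infty}$. This is the paper's route and it closes the gap immediately.

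Two smaller remarks. First, the equality $(X^{(n)})^H=(X^{(n-1)})^H$ in Part (1) is a typo; you presumably intended $(X^{(n)})^H=(X^H)^{(n)}$, after which cellular approximation gives the $(n-1)$-connectivity you need. Second, your formula $\hat X_\alpha=\bigcup_i f_i^{-1}(X_\alpha^{(i)})$ in Part (2) is the same as the paper's $\hat X_\alpha=f^{-1}(X_\alpha)$, and the rest of that argument and Part (3) coincide with the paper's proof.
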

\begin{proof}
\begin{enumerate}
    \item It follows from \cref{thm:haefliger} (3) and the fact that $X$ is $\calF$-connected up to dimension $n-1$, that $\hat X$ is $\calF$-connected up to dimension $n-1$. Given a cell $\sigma$ of $\hat X$, its isotropy group $G_\sigma$  belongs to $\calF$, therefore $\calF\cap G_\sigma$ coincides with the family of all subgroups of $G_\sigma$. Hence the one-point space is a model for $E_{\calF\cap G_\sigma}G_\sigma$, it follows that $G_\sigma$ is actually $(\calF\cap G_\sigma)$-$\mathrm F_{\infty}$. Therefore $\hat X$ is $\calF$-$n$-good.
    
    \item The $G$-maps $f_n\colon \hat X_n \to X^{(n)}$ induce a $G$-map $f\colon \hat X \to X$. Note that, from the  definition of $\hat X_n$, it follows that $\hat X_\alpha = f^{-1}(X_\alpha)$.     The hypothesis that $X_\sigma$ is a witness of property $(\calF\cap G_\sigma)$-$\mathrm F_{n-p}$ implies that $\sigma\times X_\sigma$ has $G_\sigma$-finite $n$-skeleton.     Since $f\colon \hat X_\alpha \to X_\alpha$ is $G$-equivariant cellular map, $X_\alpha$ is $G$-finite, and for each cell $\sigma$ of $X_\alpha$ the pre-image $f^{-1}(\sigma)=\sigma\times X_\sigma$ is $G_\sigma$-finite; it follows that $X_\alpha$ is $G$-finite.

    \item  Let $H\in \calF$ and $\alpha \in I$. Denote by $f_\alpha$ and $f_\beta$ be the $G$-maps $\hat X_\alpha \to X_\alpha$, $\hat X_\beta \to X_\beta$ resp. given by \cref{thm:haefliger}. For every $\beta>\alpha$, we have the following commutative diagram
    \[
    \xymatrix{\pi_k(\hat X_\alpha^H) \ar[d]_{\pi_k(f_\alpha^H)} \ar[r] & \pi_k(\hat X_\beta^H)\ar[d]^{\pi_k(f_\beta^H)} \\
    \pi_k(X_\alpha^H)\ar[r] & \pi_k( X_\beta^H) }
    \]
    where the horizontal maps are those induced by the inclusion $X_\alpha^H \subseteq X_\beta^H$, and the vertical maps are isomorphisms for all $H\in \calF$ by \cref{thm:haefliger}(3). Therefore, the upper horizontal map is the zero homomorphism if and only if the lower horizontal map is the zero homomorphism. Now the assertion follows easily.\qedhere
\end{enumerate}
\end{proof}

\section{Finding witnesses for $\ff{0}$ groups}\label{sec:WitnessF0}

Given any model $X$ for $EG$, it is always possible to collapse (all $G$-translates of) an spanning tree for the $G$-action in order to obtain a model $Y$ for $EG$ with a single $G$-orbit of $0$-cells and such that the quotient map $X\to Y$ is $G$-equivariant.

\begin{question}
Given a group $G$ and a family $\calF$ with property $\ff{0}$, and $X$ any model for $E_\calF G$, is it possible to find a  quotient $Y$ of $X$, such that $Y$ is a $G$-witness for $\ff{0}$ and the quotient map $X\to Y$ is $G$-equivariant?
\end{question}

The following proposition address in the positive the above under some assumptions on the family $\calF$. This proposition is used to prove Theorem~\ref{Brown:Corollary:ACC} in Section~\ref{sec:maintheorem}. 

\begin{proposition}\label{prop:FF0andAAC}
Let $G$ be a group,  let $\calF$ be a family of subgroups and let $X$ be a model for $E_\calF G$. 
Suppose:
\begin{enumerate}
    \item the family $\calF$ is generated by a finite collection of maximal elements, and 
    \item the poset of subgroups $(\calF, \subseteq)$ satisfies the ascending chain condition.
\end{enumerate}
Then there is a $G$-CW-complex $Y$  and a map $f\colon X\to Y$ satisfying the following conditions:
\begin{enumerate}
    \item \label{prop:ACC-01} The map $f$ is a $G$-equivariant quotient map;
    \item \label{prop:ACC-02} The $0$-dimensional skeleton $Y^{(0)}$ is $G$-finite; 
    
    \item \label{prop:ACC-02a} The isotropy of every $0$-cell of $Y$ is a maximal subgroup of $\mathcal{F}$. In particular, the isotropy of every cell of $Y$ belongs to $\mathcal{F}$.
    
    \item \label{prop:ACC-03} For each $0$-cell $y\in Y$, the preimage $f^{-1}(y)$ is a contractible subcomplex of $X^{(1)}$;
    
    \item \label{prop:ACC-04} For $k>1$, any open $k$-cell of $X$ maps homeomorphically to an open $k$-cell of $Y$.
    \item \label{prop:ACC-05} For every $H\in \calF$, the restriction $f\colon X^H \to Y^H$ is a homotopy equivalence.

\end{enumerate}
\end{proposition}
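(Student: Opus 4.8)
The plan is to build $Y$ by $G$-equivariantly collapsing, inside each fixed point set, a suitable forest. Since $\calF$ is generated by finitely many maximal subgroups $P_1,\dots,P_m$, the relevant $0$-cells are those with stabilizer conjugate to some $P_i$ or smaller; the ACC guarantees that every subgroup in $\calF$ lies below one of the $P_i$, and moreover that ascending chains of stabilizers in $X^{(0)}$ stabilize. First I would pass to a model in which $X^{(0)}$ already consists only of cells with maximal stabilizer: more precisely, I would like to first collapse away all $0$-cells whose stabilizer is strictly contained in some maximal element. To do this one observes that for a subgroup $H \in \calF$ that is not maximal, the subspace $X^H$ is contractible and contains $X^{P}$ for each maximal $P \supseteq H$; an equivariant collapse argument (choosing a $G$-invariant maximal forest in $X^{(1)}$ lying in the complement of the cells with maximal stabilizer, and contracting it) produces a $G$-quotient in which the only $0$-cells have maximal stabilizer. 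This is legitimate because collapsing a forest is a homotopy equivalence on every fixed set simultaneously, which gives \eqref{prop:ACC-05}, and because the forest can be taken to miss the cells we want to keep, giving \eqref{prop:ACC-04} and \eqref{prop:ACC-02a}.

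Second, once all $0$-cells have maximal stabilizer, there are only finitely many $G$-orbits of them up to conjugacy of the $P_i$, but possibly infinitely many orbits whose stabilizer is conjugate to a \emph{fixed} $P_i$. Here I would exploit that $X^{P_i}$ is contractible, hence connected, so the full subcomplex of $X$ on the $0$-cells with stabilizer in the conjugacy class of $P_i$ sits inside a connected space; choosing a $G$-invariant spanning forest of the graph whose vertices are these $0$-cells and whose edges are $1$-cells of $X$ joining them (using connectivity of $X^{P_i}$ to see this graph, suitably interpreted, is connected on each component one wants to collapse), and collapsing it, merges all of them into a single $G$-orbit per conjugacy class of $P_i$. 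This yields \eqref{prop:ACC-02}, while \eqref{prop:ACC-01} and \eqref{prop:ACC-03} come from the construction: $f$ is the composite of the two equivariant quotient maps, and the preimage of a $0$-cell is the collapsed forest component, a contractible subcomplex of $X^{(1)}$.

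The main obstacle I expect is making the collapse in the second step genuinely $G$-equivariant while respecting the constraint that it happens "within" fixed point sets: one must choose the spanning forests $G$-invariantly, and one must check that collapsing them does not disturb the homotopy type of $X^H$ for the \emph{non-maximal} $H$ as well — i.e. that \eqref{prop:ACC-05} holds for all $H\in\calF$, not just the maximal ones. The key point is that each $1$-cell being collapsed has stabilizer contained in some $P_i$, so it appears in $X^H$ only when $H \le P_i^g$, and within that fixed set it is still part of a forest being collapsed; one should verify that the union of the collapsed cells meets $X^H$ in a forest (no loops are created), which follows from the fact that the ambient collapsed subcomplex of $X$ is a forest and taking $H$-fixed points of a forest yields a forest. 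I would also need the ACC hypothesis precisely here: without it, the "forest" of $0$-cells with maximal stabilizer joined by short paths in $X^{P_i}$ might fail to admit a $G$-finite quotient, or the first-step collapse of the non-maximal $0$-cells might not terminate. Once these equivariance and acyclicity bookkeeping points are settled, properties \eqref{prop:ACC-01}--\eqref{prop:ACC-05} all read off directly from the two collapses.
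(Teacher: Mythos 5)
Your overall strategy---collapse $G$-invariant forests in $X^{(1)}$ to reduce the $0$-skeleton to finitely many orbits with maximal stabilizers---matches the paper's in spirit, but there is a genuine gap in the step you yourself flag as the main obstacle, namely verifying conclusion~\eqref{prop:ACC-05}. You argue that because "taking $H$-fixed points of a forest yields a forest," collapsing preserves the homotopy type of $X^H$. That is not enough. Collapsing a forest $F$ in $X$ identifies each \emph{tree component} $T$ of $F$ to a single point; on $X^H$ this identifies $T\cap X^H$ to a point. For this to be a homotopy equivalence on $X^H$, you need each $T\cap X^H$ to be empty or \emph{connected} (hence contractible). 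A forest whose $H$-fixed points form a forest can perfectly well have a single tree component $T$ with $T\cap X^H$ disconnected; collapsing such a $T$ to a point creates spurious identifications in $X^H$ and destroys the homotopy type. Your argument gives no control over the connectivity of $T\cap X^H$, and a "generic" $G$-invariant spanning forest will not have this property.

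The paper's proof is built precisely around supplying this missing control: it introduces \emph{ascending paths} (edge paths along which vertex stabilizers monotonically increase) and \emph{contracting trees}, trees $T$ with a designated apex $u$ of maximal stabilizer such that every vertex of $T$ connects to $u$ by an ascending path inside $T$. The monotonicity of stabilizers is the key: if $v\in T\cap X^H$, i.e.\ $H\le G_v$, then $H\le G_{w}$ for every vertex $w$ on the ascending path from $v$ to $u$, so the whole ascending path lies in $X^H$ and $T\cap X^H$ is connected with the apex as basepoint. The ACC hypothesis enters not merely to terminate a collapse process, as you suggest, but to guarantee (Lemma~\ref{lem:AscendingPathToMaximal}) that every $0$-cell admits an ascending path to one of maximal isotropy, so that the trees can be grown (via Zorn's Lemma on $\mathcal{P}$-systems of contracting trees) to span one representative per $G$-orbit of $0$-cells. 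Without the ascending-path structure your plan has no way to certify conclusion~\eqref{prop:ACC-05} for non-maximal $H$, and that is exactly where the real work in the proposition lives.
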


\begin{remark} \label{rem:ACC}
Recall that a group is \emph{Noetherian} if every subgroup is finitely generated, or equivalently 
any ascending chain of subgroups stabilizes after a finite length. The following statement is immediate:

Let $G$ be a group and let $\calF$ be a family of subgroups. If
\begin{enumerate}
    \item every element of $\calF$ is Noetherian, and
    \item the family is closed under unions of chains. 
\end{enumerate}
Then the poset $(\calF, \subseteq)$ satisfies the ascending chain condition.
\end{remark}


To prove Proposition~\ref{prop:FF0andAAC} we need to introduce some terminology and prove a few lemmas. For the rest of the section, let $G$ be a group,  let $\calF$ be a family of subgroups and let $X$ be a model for $E_\calF G$. 

An \emph{ascending path} in $X$ is a sequence of $0$-cells $v_0,\ldots ,v_\ell$ of $X$, and a choice of $1$-cells $e_1, \ldots , e_\ell$ such that for each $i<\ell$
\begin{enumerate}
    \item the cells $v_i$ and $v_{i+1}$ are the endpoints of the $1$-cell $e_{i+1}$ of $X$, and
    \item the isotropy of $v_i$ is a subgroup of the isotropy of $v_{i+1}$. 
\end{enumerate}
The integer $\ell$ is the \emph{length} of the path, the vertices $v_0$ and $v_\ell$ are called the \emph{initial} and \emph{terminal} points of the path respectively. The \emph{image of the path} is the subcomplex of $X$ consisting of all $0$-cells $v_i$ for $0\leq i\leq \ell$ and all $1$-cells $e_i$ for $1\leq i\leq \ell$. 

\begin{lemma}[Ascending connectivity of Fixed Point Sets of Maximal Subgroups in $\mathcal{F}$]\label{lem:AscendingPathBetweenMaximal}
Let $P$ be a maximal subgroup of $\mathcal{F}$. Then any two $0$-cells of $X^P$ are connected by an ascending path. 
\end{lemma}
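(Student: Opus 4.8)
The plan is to exploit the fact that $X^P$ is contractible, hence in particular path-connected, to get a path between any two $0$-cells of $X^P$, and then to upgrade an arbitrary edge-path to an \emph{ascending} one by local surgery. First I would fix $0$-cells $u$ and $w$ of $X^P$. Since $X^P$ is contractible and $X$ is a $G$-CW-complex (so $X^P$ is a subcomplex), there is an edge-path $u = w_0, w_1, \dots, w_m = w$ in the $1$-skeleton of $X^P$; every vertex $w_j$ on it lies in $X^P$, so $P \leq G_{w_j}$ for all $j$. The task is therefore reduced to the following: given an edge-path in $X^P$, modify it to an ascending path with the same endpoints (it suffices to do this when the endpoints have isotropy containing $P$, which is automatic here).

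The key local move is at a single edge $e$ with endpoints $a, b$, where neither $G_a \leq G_b$ nor $G_b \leq G_a$ necessarily holds, but $P \leq G_a$ and $P \leq G_b$. Here is where I would use the maximality hypothesis on $P$: since $P$ is a maximal element of $\calF$ and $P \leq G_a \in \calF$, maximality forces $G_a = P$; likewise $G_b = P$. So in fact \emph{every} $0$-cell of $X^P$ has isotropy exactly $P$, and every $1$-cell of $X^P$ joining two such vertices has isotropy contained in $P$ — in particular contained in both endpoint isotropies. Thus the ascending-path condition (2) is satisfied trivially along any edge-path inside $X^P$, because all the relevant isotropy groups are equal to $P$. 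So the original edge-path $u = w_0, \dots, w_m = w$ given by path-connectedness of $X^P$ is \emph{already} an ascending path, with the chosen $1$-cells being the edges of the path (which lie in $X \supseteq X^P$, as required — the definition only asks that the $e_i$ be $1$-cells of $X$, and these lie in $X^P \subseteq X^{(1)} \cup \cdots$, so certainly in $X$).

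I expect there to be essentially no obstacle: the content is entirely in the observation that maximality of $P$ collapses all isotropy along $X^P$ to $P$ itself, after which the ascending condition is vacuous and one only needs path-connectedness of the contractible subcomplex $X^P$. The one routine point to check carefully is that $X^P$ is a subcomplex of $X$ and is path-connected as a CW-complex (so that the topological path connecting $u$ and $w$ can be taken to be an edge-path, by cellular approximation or by the standard fact that $\pi_0$ of a CW-complex agrees with $\pi_0$ of its $1$-skeleton); this is standard for $G$-CW-complexes and uses contractibility of $X^P$, which holds since $P \in \calF$ and $X$ is a model for $E_\calF G$.
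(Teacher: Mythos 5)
Your proposal is correct and takes essentially the same route as the paper: both observe that maximality of $P$ in $\calF$ forces every $0$-cell of $X^P$ to have isotropy exactly $P$, so any edge-path in $X^P$ is automatically ascending, and path-connectedness of $X^P$ follows from $X$ being a model for $E_\calF G$. Your write-up simply spells out the details that the paper's two-sentence proof leaves implicit.
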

\begin{proof}
Since $P$ is a maximal subgroup, any path in $X^P$ is an ascending path. Since $X$ is a model for $E_{\calF}G$, $X^P$ is path-connected, and the statement follows.
\end{proof}

\begin{lemma}[Ascending connectivity of $X$]\label{lem:AscendingPathToMaximal}
Suppose that $(\mathcal{F}, \subseteq)$ satisfies the ascending chain condition, and suppose that $\mathcal{P}$ is a collection of maximal subgroups in $\mathcal{F}$ that generates $\mathcal{F}$. Then for each $0$-cell $u$ of $X$ there is an ascending path from $u$ to a $0$-cell which has isotropy a  maximal subgroup in $\mathcal{F}$.
\end{lemma}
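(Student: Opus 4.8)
The plan is to start at the given $0$-cell $u$ and build an ascending path by repeatedly "climbing" to a vertex with strictly larger isotropy, using that $u$ lies in a fixed-point set $X^K$ for suitable $K$ and that such fixed-point sets are path-connected. First I would set $K_0$ to be the isotropy group $G_u$; since $X$ is a model for $E_\calF G$, the vertex $u$ belongs to the nonempty, path-connected subcomplex $X^{K_0}$. The generating collection $\calP$ consists of maximal subgroups, so there is some $P \in \calP$ that contains a conjugate of $K_0$; after translating $u$ by a group element (this is legitimate because the conclusion is $G$-invariant — translating an ascending path by $g$ gives an ascending path, and a conjugate of a maximal subgroup is maximal) we may assume $K_0 \leq P$. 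Then $X^P$ is nonempty and path-connected, hence there is a path in $X^{K_0}$ from $u$ to some $0$-cell $w \in X^{K_0}$ whose isotropy strictly contains $K_0$ — unless $u$ itself already has maximal isotropy, in which case we are done. The point is that if $G_u$ is not maximal in $\calF$, then $G_u \lneq P$ for some maximal $P$, so $X^{G_u} \supsetneq X^P \neq \emptyset$, and since $X^{G_u}$ is connected some path from $u$ eventually exits $X^{G_u}$... more carefully: every $0$-cell $v$ on a path inside $X^{G_u}$ has $G_u \leq G_v$, so such a path is automatically ascending in the sense of the definition.

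The cleaner way to organize the climb: let $K_0 = G_u$. If $K_0$ is maximal in $\calF$ we are done (the length-$0$ path suffices). Otherwise pick $P_1 \in \calP$ maximal with $K_0 \leq P_1$ (after a harmless translation as above). Choose any $0$-cell $v_1 \in X^{P_1}$; since $X^{K_0}$ is path-connected and contains both $u$ and $v_1$, there is an edge-path in $X^{K_0}$ from $u$ to $v_1$, and every vertex on it has isotropy containing $K_0$, so this is an ascending path. Now $G_{v_1} \geq P_1 \gneq K_0$. Set $K_1 = G_{v_1}$ and repeat: either $K_1$ is maximal — done — or find a maximal $P_2$ with $K_1 \leq P_2$ and continue, obtaining $K_0 \lneq K_1 \lneq K_2 \lneq \cdots$. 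Concatenating the ascending paths produced at each stage yields one ascending path from $u$ to the terminal vertex, since the isotropy is nondecreasing along each segment and jumps up at each junction.

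The main obstacle — and the only place the ACC hypothesis is used — is termination: I must rule out an infinite strictly ascending chain of isotropy groups $K_0 \lneq K_1 \lneq \cdots$ in $\calF$. This is exactly what the ascending chain condition on $(\calF,\subseteq)$ forbids, so the process stops after finitely many steps at a vertex whose isotropy $K_m$ satisfies $K_m \leq P_{m+1}$ with $K_m$ maximal, forcing $K_m = P_{m+1}$, a maximal subgroup in $\calF$. A minor point to check carefully is that the translations used to arrange $K_i \leq P_{i+1}$ do not disturb the path already built: instead of translating, one can equivalently note that at stage $i$ we only need the existence of \emph{some} maximal $P$ of $\calF$ with $K_i \leq P$ — this follows because $\calP$ generates $\calF$, meaning $K_i$ is subconjugate to an element of $\calP$, and then $K_i$ is itself contained in a conjugate of that element, which is again a maximal subgroup of $\calF$; so one can always take $P_{i+1}$ to be a genuine maximal subgroup containing $K_i$ without any translation, and the argument goes through verbatim.
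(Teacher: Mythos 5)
There is a genuine gap in the central step of your argument. You assert, both in your first attempt and in the cleaner reorganization, that an edge-path inside $X^{K_0}$ ``is automatically ascending'' because every vertex on it has isotropy containing $K_0$. This is not what the definition requires. An ascending path needs the isotropy groups to be \emph{nested along the path}, i.e.\ $G_{v_i} \leq G_{v_{i+1}}$ for every consecutive pair. Knowing only that $K_0 \leq G_{v_i}$ for all $i$ does not give this: the isotropies along a path in $X^{K_0}$ can oscillate. Concretely, a path whose vertex isotropies read $K_0, L, K_0$ with $K_0 \lneq L$ lies entirely in $X^{K_0}$, yet the second edge goes from $L$ down to $K_0$ and violates the ascending condition. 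So the path from $u$ to your chosen $v_1 \in X^{P_1}$ need not be ascending, and the concatenation at the end inherits the same problem.

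The missing idea — and the one the paper uses — is a truncation. Take the path $u = w_0, w_1, \dots, w_\ell$ in $X^{K_0}$ with $G_{w_\ell}$ strictly larger than $K_0$, and let $j = \min\{ i : K_0 \lneq G_{w_i} \}$. For $i < j$ one has $K_0 \leq G_{w_i}$ but not strictly, so $G_{w_i} = K_0$; hence the truncated segment $w_0, \dots, w_j$ has isotropies $K_0, K_0, \dots, K_0, G_{w_j}$ and \emph{is} an ascending path, ending at a vertex with strictly larger isotropy. One then iterates this single-step improvement, and ACC is exactly what guarantees the iteration terminates at a vertex with maximal isotropy in $\calF$. (Your remark in the last paragraph — that maximality of $\calF$-membership is preserved under conjugation, so no translation of $u$ is actually needed — is correct and a nice simplification; it is the ``automatically ascending'' claim that needs repair.) As a minor side note, once you do reach a $0$-cell $v$ in $X^P$ for a maximal $P \in \calF$, you necessarily have $G_v = P$ since $P \leq G_v \in \calF$ and $P$ is maximal, so your ``repeat'' step would terminate immediately; the iteration in the corrected proof is needed only because the truncated path stops at the \emph{first} strict increase of isotropy, not at a vertex in $X^P$.
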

\begin{proof}
By the ascending chain condition, it is enough to prove that if $G_u$ is not a maximal subgroup in $\mathcal{F}$, then there is an ascending path from $u$ to a $0$-cell $v$ such that $G_u$ is a proper subgroup of $G_v$.  

Without loss of generality, assume that $G_u$ is a proper  subgroup of $P\in \mathcal{P}$. Consider the fixed point set $X^{G_u}$. Then there is path in $X^{G_u}$ from $u$ to a $0$-cell $w$ with isotropy $P$. Suppose that  sequence of $0$-cells induced by the path is $u=w_0,w_1,\dots , w_\ell=w$. Then $G_u\leq G_{w_i}$ for each $1\leq i \leq \ell$ and  $G_u\lneq G_\ell$. Let $v=w_j$ where $j=\min\{i\colon G_u \lneq G_{w_i}\}$. Observe that the sequence $u=w_0,w_1,\dots , w_j$ induces an ascending path and $G_u\lneq G_{w_j}$.
\end{proof}

A \emph{contracting tree} $T$ in $X$ is a contractible subcomplex of $X^{(1)}$ with the following properties:
\begin{enumerate}
    \item There is a $0$-cell $u$ of $T$ such that for any vertex $v$ of $T$ there is an ascending path from $v$ to $u$. A $0$-cell $u$ of $T$ with this property is called an \emph{apex} of the contracting tree;
    \item For every $0$-cell $v$ of $X$,  $T$ contains at most one vertex of the $G$-orbit of $v$.
    \item For every $g\in G$, if $T\cap gT \neq \emptyset$ then $g$ belongs to the isotropy in $X$ of an apex $u$. 
\end{enumerate}
 Note that any two apexes of a contracting tree have the same isotropy in $X$, and this subgroup shall be called the \emph{apex isotropy} of the contracting tree. 
 Observe that the third condition in the definition of contracting tree is redundant, it follows from the first two conditions. Note that a single $0$-cell is a contracting tree.

\begin{lemma}
Let $T$ be a contracting tree of $X$. Then a vertex $u$ in $T$ is an apex if and only if its stabilizer $G_u$ is the apex isotropy.
\end{lemma}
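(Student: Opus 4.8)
The statement to prove is: a vertex $u$ in a contracting tree $T$ is an apex if and only if its stabilizer $G_u$ equals the apex isotropy of $T$. One direction is essentially the definition: if $u$ is an apex, then by the remark following the definition of contracting tree (``any two apexes of a contracting tree have the same isotropy''), $G_u$ is the apex isotropy. So the content is the converse: if $v$ is a vertex of $T$ with $G_v$ equal to the apex isotropy, then $v$ is an apex, i.e. every vertex of $T$ admits an ascending path to $v$.

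First I would fix an actual apex $u$ of $T$, so that the apex isotropy is $P := G_u$, and take an arbitrary vertex $v$ of $T$ with $G_v = P$. By definition of apex there is an ascending path from $v$ to $u$; along an ascending path the stabilizers are non-decreasing, so every vertex $w$ on this path satisfies $G_v \leq G_w \leq G_u$, forcing $G_w = P$ for every $w$ on the path. In particular this exhibits that $v$ and $u$ both lie in the fixed point set $X^P$, and the path from $v$ to $u$ lies entirely in $X^P$. The key point will be to run this argument in reverse: I want to show that for an arbitrary vertex $t$ of $T$ there is an ascending path from $t$ to $v$.

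The hard part — and the heart of the proof — is to promote ``ascending path to $u$'' into ``ascending path to $v$''. Let $t$ be any vertex of $T$; by the apex property there is an ascending path $t = w_0, w_1, \dots, w_\ell = u$, with $G_{w_0} \leq G_{w_1} \leq \cdots \leq G_{w_\ell} = P$. Since $G_v = P$ is maximal among the stabilizers occurring in $T$ (indeed it equals the top of this chain), I claim $P$ is in fact a maximal subgroup in $\calF$: here I would invoke Lemma~\ref{lem:AscendingPathToMaximal} applied at $u$ — any ascending path out of $u$ can only stay at $P$ or go up, but $u$ being an apex means all ascending paths in $T$ terminate at $u$, so $G_u$ cannot be properly contained in another stabilizer appearing in $T$; combined with the structure of $T$ inside $X$ and the fact that $v$, $u$ have the same (top) stabilizer, one deduces $P$ is maximal in $\calF$ (I expect this to require a short argument unwinding the definitions, possibly using that $T$ is built so that apexes sit at the top). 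Once $P$ is maximal in $\calF$, Lemma~\ref{lem:AscendingPathBetweenMaximal} gives an ascending path inside $X^P$ between the two $0$-cells $u$ and $v$ of $X^P$; concatenating the ascending path $t \to u$ with this ascending path $u \to v$ — and checking that concatenation of ascending paths is ascending, which is immediate since stabilizers only grow along each piece and the join point $u$ has stabilizer $P \geq$ everything before it and $= G_v$ — produces an ascending path from $t$ to $v$. As $t$ was arbitrary, $v$ is an apex.

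The main obstacle I anticipate is the claim that $P = G_u$ is maximal in $\calF$: the definition of contracting tree only says directly that apexes exist and that $G$-translates of $T$ overlap only via the apex isotropy, so I must extract maximality of $P$ from the interplay between the contracting tree axioms and the ambient model $X$, rather than from the tree alone. If maximality of $P$ cannot be established in general, a fallback is to argue more directly: take the ascending path $t \to u$ and the ascending path $v \to u$ (reversing is not allowed, so instead note both $t \to u$ and $v \to u$ exist), and then build the path $t \to v$ by noting the path $v \to u$ visits only vertices with stabilizer $P$ lying in $X^P$, hence it can be ``reversed'' as an ascending path $u \to v$ precisely because every stabilizer along it equals $P$ (a constant chain is ascending in both directions). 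This avoids invoking maximality in $\calF$ entirely and only uses that the $v \to u$ path has constant stabilizer $P$, which we established at the outset. I would present this second, cleaner route as the actual proof.
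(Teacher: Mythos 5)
Your final argument (the ``fallback'' you decide to present as the actual proof) is correct and is essentially the paper's own proof: along any ascending path from $v$ to a known apex, the stabilizers are sandwiched between $G_v = P$ and the apex isotropy $P$, hence constant, so the reversed sequence is again ascending; concatenating an ascending path $t \to \text{apex}$ with this reversal yields an ascending path $t \to v$. Your first route should be discarded, as you suspected: nothing in the definition of a contracting tree forces the apex isotropy to be a maximal subgroup of $\calF$ (e.g.\ a single $0$-cell with any isotropy in $\calF$ is a contracting tree), so the appeal to Lemma~\ref{lem:AscendingPathBetweenMaximal} is both unjustified and unnecessary; the constant-stabilizer observation does all the work.
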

\begin{proof}
The only if part is direct from the definition. Conversely, suppose that $G_u$ is the vertex isotropy. By definition, there is an ascending path $w_0,\ldots ,w_\ell$ from $u$ to an apex $w$ of $T$. Note that the reversed sequence $w_\ell, w_{\ell-1},\cdots , w_0$ from $w$ to $u$ is an ascending path as well since each vertex $w_i$ has stabilizer equal to $G_u$. For an arbitrary vertex $v$ of $T$ there is an ascending path from $v$ to $u$ arising as the concatenation from an ascending path $v_0,\cdots , v_m=w$ from $v$ to the apex $w$, followed by the ascending path $w=w_\ell, w_{\ell-1},\cdots , w_0$.
\end{proof}

\begin{lemma}[Quotient by a Contracting Tree]\label{lem:ContractingTree}
Let $T$ be a contracting tree of $X$. Let $Y$ be the quotient of $X$ resulting from the equivalence relation  generated by 
\[ \{ (x,y)\in X\times X \colon \text{there is $g\in G$ such that $\{x,y\}\subset gT$} \} .\]
Then $Y$ admits  a $G$-CW-complex structure such that the quotient map $f\colon X\to Y$ satisfies the conclusions~\eqref{prop:ACC-01},  \eqref{prop:ACC-03},  \eqref{prop:ACC-04} and~\eqref{prop:ACC-05} of Proposition~\ref{prop:FF0andAAC}. Moreover, the image of $T$ is a $0$-cell of $Y$ with isotropy equal to the apex isotropy  of $T$.
\end{lemma}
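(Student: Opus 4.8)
The plan is to construct the $G$-CW-structure on $Y$ explicitly from that of $X$, using the three defining properties of a contracting tree to control how cells and stabilizers behave under the quotient. Write $p\colon X\to Y$ for the quotient map. The first step is to understand the equivalence classes: because of property (2) of a contracting tree ($T$ meets each $G$-orbit of vertices at most once) together with property (3) (if $T\cap gT\neq\emptyset$ then $g$ lies in the apex isotropy $P$), the $G$-translates of $T$ that are not equal either coincide or are disjoint, and the stabilizer of the subcomplex $gT$ is $gPg^{-1}$. Hence the non-trivial equivalence classes are exactly the subcomplexes $gT$ for $g\in G$, and $G$ acts on the set of these translates with a single orbit and point-stabilizer $P$. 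A vertex of $X$ not in any $gT$ forms a singleton class. This shows that as a set, $Y^{(0)}=G/P \sqcup (\text{other }0\text{-cells})$, giving conclusion (ii) of Proposition~\ref{prop:FF0andAAC} once we know $X$ is $G$-finite on $0$-cells in the relevant application — but for this lemma we only claim \eqref{prop:ACC-01}, \eqref{prop:ACC-03}, \eqref{prop:ACC-04}, \eqref{prop:ACC-05}, so it suffices to record that the image of $T$ is a single $0$-cell with isotropy $P$.

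Next I would build the CW-structure on $Y$. Give $Y$ the quotient topology; the $0$-cells are the equivalence classes just described, and for $k\geq 1$ the open $k$-cells of $Y$ are the images of the open $k$-cells of $X$ that are not contained in $\bigcup_g gT$. Since each $gT$ is contained in $X^{(1)}$, every open cell of $X$ of dimension $\geq 2$ survives and maps homeomorphically to its image — this is conclusion \eqref{prop:ACC-04}; the $1$-cells not in any translate of $T$ likewise survive, while those inside a translate of $T$ are collapsed. The characteristic maps for $Y$ are obtained by composing those of $X$ with $p$; one checks attaching maps land in lower skeleta of $Y$ because collapsing a connected subcomplex of $X^{(1)}$ only affects how $1$-cells attach. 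That $G$ acts cellularly on $Y$ and that a group element fixing a cell of $Y$ setwise fixes it pointwise follows from the corresponding property in $X$ together with the description of stabilizers: the stabilizer of the $0$-cell $p(T)$ is $P$, and all other cell stabilizers in $Y$ equal the stabilizers of the corresponding cells in $X$. This gives \eqref{prop:ACC-01} (the map $p$ is a $G$-equivariant quotient map by construction) and the final sentence of the lemma.

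For \eqref{prop:ACC-03}, the preimage $f^{-1}(p(T))$ is by definition the union $\bigcup_{g\in G}\,(\text{the translate }gT\text{ that, if nonempty intersection, equals it})$ — more precisely it is exactly $T$ itself, since the equivalence class of a point $x\in T$ consists of points $y$ with $\{x,y\}\subset gT$ for some $g$, and property (2)–(3) force $gT=T$ whenever $gT\cap T\neq\emptyset$ and $gT$ meets the same orbit of vertices. Thus $f^{-1}(p(T))=T$, which is contractible and contained in $X^{(1)}$ by hypothesis; preimages of the other ($\text{singleton}$) $0$-cells are single points. This is \eqref{prop:ACC-03}.

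The main work — and the step I expect to be the principal obstacle — is conclusion \eqref{prop:ACC-05}: for every $H\in\calF$ the restriction $f\colon X^H\to Y^H$ is a homotopy equivalence. The idea is that $f$ collapses each subcomplex $gT$, and one wants to say this is a homotopy equivalence on $H$-fixed sets because the relevant pieces of $gT$ that meet $X^H$ are themselves contractible. Concretely, $Y^H$ is obtained from $X^H$ by collapsing each subcomplex $gT\cap X^H = (gT)^H$; one must show (a) that $(gT)^H$ is either empty or contractible, and (b) that collapsing these subcomplexes is a cofibration-style homotopy equivalence, e.g. because $(gT)^H\hookrightarrow X^H$ is a cofibration and $(gT)^H$ is contractible, so that $X^H\to X^H/\!\sim$ is a homotopy equivalence (and then identify $X^H/\!\sim$ with $Y^H$). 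Point (a) is where the contracting-tree hypotheses are really needed: the ascending-path structure means that an apex $u$ of $gT$ has the largest stabilizer among vertices of $gT$, namely $gPg^{-1}$; if $H$ fixes some vertex of $gT$ then, chasing up an ascending path, $H$ fixes $u$, hence $H\le gPg^{-1}$; and then $(gT)^H$ contains the apex and, since ascending paths from any vertex of $(gT)^H$ to $u$ stay inside $(gT)^H$, $(gT)^H$ deformation retracts to $u$, hence is contractible. This last deformation-retraction claim requires a small argument that a tree (or more generally a contractible graph) with a chosen "root" $u$ such that every vertex is joined to $u$ by an edge-path lying in the fixed set deformation retracts onto $u$ — true for trees since contractible $1$-complexes are trees and one collapses leaves inward. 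I would package (a) and (b) as a short sublemma and then conclude that $f^H$ is a homotopy equivalence for every $H\in\calF$ (including $H$ trivial, giving the non-equivariant statement). Care is needed to ensure the collapsing map $X^H\to Y^H$ really is the quotient by a disjoint union of contractible subcomplexes with the cofibration property, which follows from the CW-structure; this bookkeeping, rather than any deep idea, is the bulk of the proof.
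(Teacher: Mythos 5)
Your proposal identifies the right skeleton of argument (describe the equivalence classes, build the CW-structure, show fixed-point preimages are contractible, finish with a cofibration collapse), but it contains a genuine error in the first step that propagates through the rest: you claim the nontrivial equivalence classes are exactly the translates $gT$, and in particular that $f^{-1}(p(T))=T$. This is false. Condition (3) of a contracting tree says that $T\cap gT\neq\emptyset$ implies $g\in P$ (the apex isotropy); it does \emph{not} say $gT=T$. An element $g\in P$ fixes the apex $u$ but is free to move the other vertices of $T$. In fact, if $g\in P$ moves \emph{any} vertex $v$ of $T$ to a vertex $gv\notin T$, then $gT\neq T$ even though $gT\cap T\ni u$ is nonempty --- and by condition (2) (at most one vertex per orbit), $gT=T$ would force $gv=v$ for all vertices $v$ of $T$, which is simply not part of the hypotheses. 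So ``the $G$-translates of $T$ that are not equal either coincide or are disjoint'' is wrong, and the assertion that the equivalence relation generated by the given pairs has $T$ as a class is wrong: transitivity through the shared apex makes the equivalence class of any point of $T$ equal to $T^{+}:=\bigcup_{g\in P} gT$, which in general is strictly larger than $T$.

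This gap is not cosmetic. The paper's proof introduces $T^{+}$ at the outset, proves that $T^{+}$ is a contractible subcomplex of $X^{(1)}$ (this is an extra nontrivial step, done by showing each overlap $T\cap gT$ is connected because the ascending paths to the shared apex agree, and then passing to a direct limit), proves that $T^{+}$ is an equivalence class, and proves that $T^{+}\cap gT^{+}\neq\emptyset$ forces $gT^{+}=T^{+}$. Only with $T^{+}$ in hand do conclusions \eqref{prop:ACC-03} and \eqref{prop:ACC-05} come out: the preimage of the new $0$-cell is $T^{+}$ (not $T$), and the subcomplex whose $H$-fixed set must be shown empty or contractible is $T^{+}\cap X^{H}$ (not $(gT)^{H}$). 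Your sublemma that $(gT)^{H}$ is empty or contractible is a correct statement, and your cofibration/deformation-retraction strategy for \eqref{prop:ACC-05} is the right one, but as written it establishes contractibility of the wrong subcomplexes. To repair the proof you would need to (i) replace $T$ by $T^{+}$ everywhere, (ii) prove that $T^{+}$ is a contractible subcomplex of $X^{(1)}$, and (iii) rerun your ascending-path argument for $(T^{+})^{H}$; the ascending-path idea does still work because $T^{+}$ has a single apex $u$ and every vertex of $T^{+}$ has an ascending path to $u$, so the argument transfers, but it must be carried out for $T^{+}$.
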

\begin{proof} Let $P$ be an apex isotropy of $T$.  Let $T^+=\bigcup_{g\in P} gT$. 

First we prove that  $T^+$ is a contractible subcomplex of $X^{(1)}$. Note that it is enough to prove that if $T\cap gT \neq \emptyset$ then $T\cap gT$ is connected and hence contractible; indeed, this shows that the finite union of translates of $T$ is contractible, and therefore all homology groups and the fundamental group of $T^+$ are trivial by standard direct limit  arguments. Suppose that $T\cap gT\neq \emptyset$. This intersection contains a $0$-cell $u$ that is both an apex of $T$ as well as an apex of $gT$. Then for any point in $T\cap gT$, the ascending paths to $u$ in $T$ and $gT$ coincide; hence  $T\cap gT$ is connected.

Second, we argue that $T^+$ is an equivalence class. Note that all points in $T^+$ are equivalent since if $g\in P$ then $T\cap gT$ contains an apex of $T$. Conversely suppose $x\in X$ is equivalent to an element of $T^+$. Then there is a sequence  $x_0, x_1, \dots ,x_k=x$ in $X$, and a sequence $g_1, \ldots ,g_k$ in $G$ such that $\{x_{i-1},x_i\}\subset g_iT$ and $x_0\in T$. Since $x_0\in T$, it follows that $g_1\in P$. By induction $g_i\in P$ for all $i$, and hence, in particular, $x\in T^+$.

Observe that if $g\in G$ and $T^+\cap gT^+ \neq \emptyset$, then $g\in P$ and, in particular, $T^+= gT^+$.     Indeed, if $T^+\cap gT^+ \neq \emptyset$ then there are $a,b\in P$ such that $aT\cap gbT \neq \emptyset$. Therefore, by the definition of contractible tree, $a^{-1}gb \in P$ and therefore $g\in P$.

Let us observe that $Y$ carries a natural structure of $G$-CW-complex, and that the quotient map $f\colon X\to Y$ is $G$-equivariant. This is a consequence that of the following description of equivalence classes. Since $T^+$ is an equivalence class and it is a subcomplex of $X^{(1)}$, it follows that for every point $x\in X$, its equivalence class is either a single point (in the case that $x$ in the interior of a $k$-cell for $k>2$; or $x$ is in the interior of a $1$-cell whose orbit does not intersect $T$), or is of the form $gT^+$. 

The description of equivalence classes also shows that any for any $0$-cell of $Y$, the pre-image $f^{-1}(y)$ is either a $0$-cell of $X$, or of the form $gT^+$.  At this point, conclusions~\eqref{prop:ACC-01}, \eqref{prop:ACC-03}, and~\eqref{prop:ACC-04} have been proved. Note that it also follows that the image of $T^+$ in $Y$  is a $0$-cell with isotropy the apex isotropy of $T$.

It is left to prove that $f\colon X^H \to Y^H$ is a homotopy equivalence for any $H\in \mathcal{F}$. It is enough to show that for any  $H\in \calF$ the intersection  $T^+\cap X^H$ is either empty or contractible. Indeed, suppose the intersection is non-empty; since  $X^H$ contains all apexes of $T$, if there are two $0$-cells in the intersection, then the  concatenation of two ascending paths into an apex  yields a path between the two $0$-cells that is contained in $X^H$.
\end{proof}

Let $\mathcal{P}$ be a collection of subgroups generating $\calF$.   A \emph{$\mathcal{P}$-system of contracting trees} for $X$ is   collection $\{T_P \colon P \in \mathcal{P}\}$ of subspaces of $X^{(1)}$ indexed by $\mathcal{P}$ such that for every $P,Q\in \mathcal{P}$ the following properties hold:
\begin{enumerate}
    \item Each $T_P$ is a contracting tree of $X$ with apex isotropy equal to $P$; and 
    \item If $g\in G$ and $T_P \cap gT_Q \neq \emptyset$ then $P=Q$ and $g\in P$.  
\end{enumerate}

A {$\mathcal{P}$-system of contracting trees} $\{T_P \colon P \in \mathcal{P}\}$  for $X$ is a  \emph{spanning $\mathcal{P}$-system of contracting trees} if $\bigcup_{P\in \mathcal{P}} gT_P$ contains exactly one vertex of every $G$-orbit of a $0$-cell of $X$.

\begin{lemma}[Quotient by a Spanning  System of Contracting Trees]\label{lem:QuotientSpanningSystem}
Suppose that $\mathcal{P}$ is a finite collection.
Let $\{T_P \colon P \in \mathcal{P}\}$ be a spanning $\mathcal{P}$-system of contracting trees for $X$. Let $Y$ be quotient of $X$ resulting from the  equivalence relation  generated by the pairs
\[ \{ (x,y)\in X\times X \colon \text{there is $g\in G$ and $P\in \mathcal{P}$ such that $\{x,y\}\subset gT_P$} \}.\]
Then $Y$ admits a $G$-CW-complex structure such that the quotient map $X\to Y$ satisfy all the conclusions of Proposition~\ref{prop:FF0andAAC}.
\end{lemma}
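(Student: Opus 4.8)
The plan is to bootstrap from \Cref{lem:ContractingTree}, treating a spanning $\mathcal{P}$-system as a finite collection of mutually non-interacting contracting trees and quotienting by all their orbits simultaneously. First I would observe that, because $\mathcal P$ is finite and the trees are pairwise non-interacting in the strong sense of property (2) of a $\mathcal P$-system, the equivalence relation generated by the displayed set of pairs has equivalence classes that are exactly: single points (interiors of $k$-cells with $k\geq 2$, or interiors of $1$-cells in $G$-orbits meeting none of the $T_P$), and sets of the form $gT_P^+$ where $T_P^+=\bigcup_{a\in P}aT_P$ as in the proof of \Cref{lem:ContractingTree}. The key point making this work is that a pair $(x,y)$ with $\{x,y\}\subseteq gT_P$ and a pair $(y,z)$ with $\{y,z\}\subseteq hT_Q$ can be chained only if $gT_P\cap hT_Q\neq\emptyset$, which by property (2) forces $P=Q$ and $g^{-1}h\in P$, so $gT_P^+=hT_Q^+$; hence the classes coming from distinct $T_P$'s never merge, and within a fixed $P$ the analysis is verbatim that of \Cref{lem:ContractingTree}. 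From this description, exactly as in that lemma, $Y$ inherits a $G$-CW-complex structure, $f\colon X\to Y$ is a $G$-equivariant quotient map, and conclusions~\eqref{prop:ACC-01}, \eqref{prop:ACC-03}, \eqref{prop:ACC-04}, \eqref{prop:ACC-05} follow immediately (for~\eqref{prop:ACC-05} one uses, as before, that $T_P^+\cap X^H$ is empty or contractible, since $X^H$ contains all apexes of $T_P$ whenever it meets $T_P^+$).

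It remains to establish~\eqref{prop:ACC-02} and~\eqref{prop:ACC-02a}, the two conclusions not covered by the single-tree case, and this is where the extra hypotheses enter. For~\eqref{prop:ACC-02a}: every $0$-cell of $Y$ is the image either of a $0$-cell $v$ of $X$ not lying in any $gT_P$, or of some $gT_P^+$. In the second case the isotropy of the resulting $0$-cell is $gPg^{-1}$, which is maximal in $\calF$ since $P\in\mathcal P$ and every element of $\mathcal P$ is a maximal element of $\calF$. The first case, however, cannot occur: I claim the spanning condition forces every $0$-cell orbit to meet $\bigcup_{P}gT_P$. Indeed, by \Cref{lem:AscendingPathToMaximal} every $0$-cell $u$ of $X$ admits an ascending path to a $0$-cell $w$ with $G_w$ a maximal subgroup $P'\in\mathcal P$ (after conjugating, since $\mathcal P$ generates $\calF$); and the spanning $\mathcal P$-system contains, in the orbit of $w$, a vertex of some $T_P$ with apex isotropy $P$ — so $G_w$ is conjugate to an apex isotropy, and by maximality $P'$ itself is, hence $u$'s orbit meets the union of translates of the trees. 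This is the step I expect to require the most care: pinning down that the spanning condition plus maximality plus \Cref{lem:AscendingPathToMaximal} genuinely forces \emph{every} $0$-cell (not just every ``maximal'' $0$-cell) into the support of the system, so that the isotropy of every $0$-cell of $Y$ is a conjugate of some $P\in\mathcal P$. For~\eqref{prop:ACC-02}: since $\bigcup_{g,P}gT_P$ contains exactly one vertex of every $G$-orbit of $0$-cells, and $\mathcal P$ is finite with each $T_P^+$ a finite union of translates of $T_P$, there are only finitely many $G$-orbits of $0$-cells in $X$; as each such orbit contributes a single $0$-cell $G$-orbit to $Y$ (collapsed onto the image of the corresponding $gT_P^+$), the $0$-skeleton $Y^{(0)}$ is $G$-finite.

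The main obstacle is thus the bookkeeping in~\eqref{prop:ACC-02a}: showing the spanning property — stated as covering one vertex per orbit — together with \Cref{lem:AscendingPathToMaximal} is exactly strong enough to guarantee that \emph{no} $0$-cell of $X$ survives to $Y$ with a non-maximal (or, worse, unpredictable) stabilizer. Everything else is either a routine re-run of \Cref{lem:ContractingTree} with the disjointness of the $T_P^+$'s (guaranteed by property (2) of a $\mathcal P$-system) replacing the single-tree argument, or a direct finiteness count. I would therefore structure the written proof as: (i) the equivalence-class description and reduction to \Cref{lem:ContractingTree} for~\eqref{prop:ACC-01},\eqref{prop:ACC-03},\eqref{prop:ACC-04},\eqref{prop:ACC-05}; (ii) the ascending-path argument for~\eqref{prop:ACC-02a}; (iii) the finiteness count for~\eqref{prop:ACC-02}.
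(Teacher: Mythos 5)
Your overall plan matches the paper's: quotient by the finitely many (orbits of) trees, invoking Lemma~\ref{lem:ContractingTree} iteratively to get~\eqref{prop:ACC-01}, \eqref{prop:ACC-03}, \eqref{prop:ACC-04}, \eqref{prop:ACC-05}, and then derive~\eqref{prop:ACC-02} and~\eqref{prop:ACC-02a} from finiteness of $\mathcal{P}$ and the spanning condition. However, both of your remaining arguments contain genuine errors, even though the conclusions are correct and in fact easier to reach than you make them.

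For~\eqref{prop:ACC-02} you assert that ``there are only finitely many $G$-orbits of $0$-cells in $X$.'' This is false in general: a contracting tree $T_P$ can contain infinitely many $0$-cells (for instance, the usual spanning-tree construction for $EG$ with the trivial family produces an infinite tree), and moreover $T_P^+=\bigcup_{g\in P}gT_P$ is a union indexed by $P$, which need not be finite; so neither of the intermediate claims you use (``finitely many orbits in $X$,'' ``$T_P^+$ a finite union'') holds. What is true, and what you should say, is that by the spanning condition every $0$-cell of $Y$ is the image of some $gT_P^+$, and $T_P^+$, $gT_P^+$ have the same image $G$-orbit in $Y$; hence $Y^{(0)}$ has at most $|\mathcal{P}|$ $G$-orbits.

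For~\eqref{prop:ACC-02a} the detour through Lemma~\ref{lem:AscendingPathToMaximal} is both unnecessary and logically broken. The spanning condition says, by definition, that $\bigcup_{P\in\mathcal{P}}T_P$ meets every $G$-orbit of $0$-cells, so no $0$-cell of $X$ lies outside $\bigcup_{g\in G,\,P\in\mathcal{P}}gT_P$; there is nothing to prove. Your attempted proof produces an ascending path from a given $0$-cell $u$ to a $0$-cell $w$ with maximal isotropy and then shows $w$'s orbit meets the trees --- but this concludes nothing about $u$'s orbit, which is a different $G$-orbit in general, so the last implication (``hence $u$'s orbit meets the union of translates of the trees'') does not follow. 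Once you delete that detour and cite the spanning definition directly, the rest of~\eqref{prop:ACC-02a} is exactly the ``moreover'' part of Lemma~\ref{lem:ContractingTree}, as the paper does.
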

\begin{proof}
Since $\mathcal{P}$ is a finite collection, conclusions~\eqref{prop:ACC-01}, \eqref{prop:ACC-03},  \eqref{prop:ACC-04} and~\eqref{prop:ACC-05}  follow from  applying Lemma~\ref{lem:ContractingTree} a finite number of times. Again, that $\mathcal{P}$ is finite, and the assumption that the  $\mathcal{P}$-system is spanning imply  conclusion~\eqref{prop:ACC-02}.  The statement~\eqref{prop:ACC-02a} follows from the ``moreover part" of Lemma~\ref{lem:ContractingTree}.
\end{proof}

In view of Lemma~\ref{lem:QuotientSpanningSystem}, Proposition~\ref{prop:FF0andAAC} follows from the existence of a spanning $\mathcal{P}$-system of contracting trees for a particular choice of a finite collection $\mathcal{P}$  of subgroups generating $\mathcal{F}$.  Specifically, 
this requires the assumptions that $\mathcal{P}$ is a minimal collection  consisting of maximal subgroups in $\mathcal{F}$, and the  hypothesis that $(\mathcal{F}, \subseteq)$ satisfies the ascending chain condition.  The existence is an application of Zorn's Lemma explained below.

\begin{lemma}[Existence of Systems of Contracting Trees]\label{lem:ExistencePSystems}
Suppose that $\mathcal{P}$ is a minimal collection of subgroups generating $\mathcal{F}$, and suppose that each $P\in \mathcal{P}$ is a maximal element of $\mathcal{F}$. Then there exists a $\mathcal{P}$-system of contracting trees.
\end{lemma}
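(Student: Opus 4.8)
The plan is to build the system by a Zorn's Lemma argument on a poset of ``partial'' $\mathcal{P}$-systems. First I would record two preliminary facts. For each $P\in\mathcal{P}$ the fixed point set $X^P$ is contractible, hence a nonempty subcomplex of $X$, hence contains a $0$-cell; and since $P$ is a maximal element of $\mathcal{F}$ while every isotropy group of $X$ lies in $\mathcal{F}$, the isotropy of any $0$-cell of $X^P$ is exactly $P$. Moreover, distinct members of $\mathcal{P}$ are pairwise non-conjugate: if $P,P'\in\mathcal{P}$ were distinct with $P'=gPg^{-1}$, then $P'$ would be subconjugate to $P$, so $\mathcal{P}\smallsetminus\{P'\}$ would still generate $\mathcal{F}$, contradicting minimality.

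Next I would let $\mathcal{Z}$ be the set of families $\{T_P\}_{P\in\mathcal{P}}$ in which each $T_P$ is a contracting tree of $X$ with apex isotropy $P$ and $T_P\cap gT_Q=\emptyset$ whenever $P\neq Q$ and $g\in G$; since any contracting tree $T$ satisfies $T\cap gT=\emptyset$ for every $g$ outside its apex isotropy, a member of $\mathcal{Z}$ is precisely a $\mathcal{P}$-system of contracting trees. Order $\mathcal{Z}$ by componentwise inclusion. Then $\mathcal{Z}\neq\emptyset$: choosing a $0$-cell $u_P\in X^P$ for each $P$ gives the family of singletons $\{\{u_P\}\}\in\mathcal{Z}$, because a single $0$-cell is a contracting tree whose apex isotropy is its own stabilizer, and for $P\neq Q$ the vertices $u_P$ and $u_Q$ have non-conjugate stabilizers, hence lie in distinct $G$-orbits, so $\{u_P\}\cap g\{u_Q\}=\emptyset$ for every $g$. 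Then I would check that every chain $\{T_P^{(i)}\}_i$ in $\mathcal{Z}$ admits the componentwise union $T_P:=\bigcup_i T_P^{(i)}$ as an upper bound inside $\mathcal{Z}$: each $T_P$ is an increasing union of contractible $1$-complexes, hence connected with trivial $H_1$ and trivial $\pi_1$ by the standard direct-limit argument, hence contractible; an apex of some $T_P^{(i_0)}$ has stabilizer $P$, so by the characterization of apexes as exactly the vertices whose stabilizer equals the apex isotropy it remains an apex of every $T_P^{(i)}\supseteq T_P^{(i_0)}$, hence of $T_P$, so $T_P$ has apex isotropy $P$; and the ``at most one vertex per $G$-orbit'' condition and cross-disjointness with the other trees clearly pass to nested unions. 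Zorn's Lemma then yields a maximal element of $\mathcal{Z}$, which is the sought $\mathcal{P}$-system.

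The step I expect to be the main obstacle is this chain step --- precisely, confirming that a nested union of contracting trees sharing an apex isotropy is again a contracting tree; contractibility is the same direct-limit computation already used in the proof of \cref{lem:ContractingTree}, but one must be certain an apex survives in the limit, which is exactly what the characterization ``a vertex is an apex if and only if its stabilizer is the apex isotropy'' provides. I would also record --- as this is what \cref{prop:FF0andAAC} actually needs --- that if in addition $(\mathcal{F},\subseteq)$ satisfies the ascending chain condition, then a maximal element of $\mathcal{Z}$ is automatically \emph{spanning}. Indeed, were some $G$-orbit of $0$-cells disjoint from $\bigcup_{P}\bigcup_{g\in G}gT_P$, then by \cref{lem:AscendingPathToMaximal}, after conjugating so that the terminal stabilizer becomes some $P\in\mathcal{P}$, an uncovered $0$-cell would admit an ascending path to a $0$-cell with stabilizer $P$; concatenating with a path inside the contractible set $X^P$ to an apex of $T_P$ (such a path is ascending, all its stabilizers equalling the maximal group $P$) yields an ascending path from an uncovered $0$-cell to a covered one. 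Letting $v$ be the last vertex on it lying in an uncovered $G$-orbit and $v'$ the next vertex, we have $v'\in hT_R$ for some $h\in G$, $R\in\mathcal{P}$ and $G_v\leq G_{v'}$, and adjoining the single vertex $h^{-1}v$ to $T_R$ along the edge from $h^{-1}v$ to $h^{-1}v'$ produces a strictly larger contracting tree of apex isotropy still $R$ that is still disjoint from every $G$-translate of every $T_Q$ and still contains at most one vertex per $G$-orbit, contradicting maximality. The one place requiring genuine care there is checking that this enlargement really does keep $T_R$ a contracting tree with unchanged apex isotropy, and it is at this point that the hypotheses ``$\mathcal{P}$ minimal'' and ``$\mathcal{P}$ consists of maximal subgroups'' are used most delicately.
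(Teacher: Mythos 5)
Your construction of a $\mathcal{P}$-system via singleton $0$-cells $\{u_P\}$, together with the observation that distinct members of a minimal generating collection $\mathcal{P}$ are pairwise non-conjugate, is exactly the paper's proof of this lemma, and it is correct. However, the statement only asserts the \emph{existence} of a $\mathcal{P}$-system of contracting trees, so the singleton construction already finishes the job; the Zorn's-Lemma passage to a maximal element and the subsequent ``maximal implies spanning'' argument are not needed here. Those two steps are the content of the paper's later Lemmas~\ref{lem:ExistenceMaximalSystem} and~\ref{lem:MaximalImpliesSpanning}, and your versions of them are also essentially correct: your chain-union argument uses the same direct-limit computation of contractibility and the characterization of apexes by their stabilizers as the paper does, and your ``maximal implies spanning'' argument differs from the paper's only in that you enlarge $T_R$ by a single edge $\{h^{-1}v, h^{-1}v'\}$ whereas the paper adjoins the image of an entire minimal-length ascending path $\gamma$ at once; both enlargements are valid and produce a proper extension, contradicting maximality. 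In short: the proof you wrote establishes all of Proposition~\ref{prop:FF0andAAC}'s tree machinery rather than just this lemma, but the portion that actually proves the stated lemma coincides with the paper's argument.
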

\begin{proof}
For each $P\in \mathcal{P}$, choose a $0$-cell $v_P$ in the fixed point set $X^P$, this cell exists since $X$ is a model for $E_\mathcal{F}G$. Since $P$ is a maximal subgroup of $\mathcal{F}$ and $X$ has only isotropies in $\mathcal{F}$, it follows that the isotropy of $v_P$ equals $P$. Define $T_P$ as the single $0$-cell $v_P$.

Note that each $T_P$ is a contracting tree with apex isotropy equal to $P$. Suppose that $g\in G$, $Q,P\in \calP$  and $T_P\cap gT_Q \neq \emptyset$. This implies that $P=gQg^{-1}$. Since $\mathcal{P}$ is a minimal collection generating $\mathcal{F}$, it follows that $P=Q$ and $g\in P$. This shows that $\{T_P\colon P\in \mathcal{P}\}$ is a $\mathcal{P}$-system of contracting trees. 
\end{proof}

Let $\mathcal{A}=\{A_P \colon P \in \mathcal{P}\}$
and $\mathcal{B}=\{B_P \colon P \in \mathcal{P}\}$ be $\mathcal{P}$-systems of contracting trees for $X$. The system $\mathcal{B}$ \emph{extends} $\mathcal{A}$, denoted by $\mathcal{A} \preceq \mathcal{B}$, if for every $P\in \mathcal{P}$, $A_P$ is a subcomplex of $B_P$. This defines a partial order on the collection of
$\mathcal{P}$-systems of contracting trees for $X$. If $\calA\preceq \calB$  and $\calA \neq \calB$, we say that $\calB$ is a proper extension of $\calA$.

A $\mathcal{P}$-system of contracting trees for $X$ is called \emph{maximal} if it is maximal with respect to the partial order $\preceq$.

\begin{lemma}[Existence of Maximal Systems]\label{lem:ExistenceMaximalSystem}
Suppose that $\mathcal{P}$ is a minimal collection of subgroups generating $\mathcal{F}$, and suppose that each $P\in \mathcal{P}$ is a maximal element of $\mathcal{F}$.
There exists a maximal  $\mathcal{P}$-system of contracting trees for $X$. 
\end{lemma}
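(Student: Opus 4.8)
The plan is to apply Zorn's Lemma to the poset of $\mathcal{P}$-systems of contracting trees for $X$, ordered by the extension relation $\preceq$. By \cref{lem:ExistencePSystems}, under the present hypotheses this poset is non-empty (which in particular provides an upper bound for the empty chain), so everything reduces to showing that an arbitrary chain $\{\mathcal{A}^{j}\}_{j\in J}$, with $\mathcal{A}^{j}=\{A_P^{j}\colon P\in\mathcal{P}\}$, has an upper bound.

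The natural candidate is the ``union'' $\mathcal{A}=\{A_P\colon P\in\mathcal{P}\}$ with $A_P=\bigcup_{j\in J}A_P^{j}$, and the work is to verify that $\mathcal{A}$ is again a $\mathcal{P}$-system of contracting trees; it then clearly extends every $\mathcal{A}^{j}$. First, $A_P$ is a subcomplex of $X^{(1)}$, being a union of such; for fixed $P$ the family $\{A_P^{j}\}_{j}$ is totally ordered by inclusion and each member is a non-empty tree, so $A_P$ is connected and contains no embedded cycle (a cycle, being a finite subcomplex, would already lie in some $A_P^{j}$, a tree), hence $A_P$ is a tree. For the apex condition, fix an apex $u$ of some $A_P^{j_0}$; by the characterization of apexes established above, $G_u$ equals the apex isotropy $P$, and for any vertex $v\in A_P$ one may pick $j\ge j_0$ with $v\in A_P^{j}$, whence $u$ is an apex of $A_P^{j}$ and there is an ascending path from $v$ to $u$ inside $A_P^{j}\subseteq A_P$; thus $u$ is an apex of $A_P$ with apex isotropy $P$. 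The remaining contracting-tree axiom (that $A_P$ meets each $G$-orbit of $0$-cells at most once) and the $\mathcal{P}$-system compatibility axiom (if $A_P\cap gA_Q\neq\emptyset$ then $P=Q$ and $g\in P$) each involve at most two cells, so one absorbs the relevant indices into a single large $j\in J$ and applies the corresponding axiom for $\mathcal{A}^{j}$; recall that the third axiom in the definition of contracting tree is redundant. Hence $\mathcal{A}$ is a $\mathcal{P}$-system extending the chain, and Zorn's Lemma yields a maximal one.

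I expect the only mildly delicate points to be the directed-union contractibility of $A_P$ and checking that an apex of an early stage persists as an apex of the union; both become routine once one invokes the already-proved equivalence ``$u$ is an apex $\iff G_u$ is the apex isotropy.'' Note that the hypotheses on $\mathcal{P}$ (minimality, maximality of its members) enter only through \cref{lem:ExistencePSystems} to guarantee non-emptiness of the poset, and are not used in the Zorn argument itself.
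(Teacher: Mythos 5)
Your proof is correct and takes the same route as the paper: apply Zorn's Lemma to the poset of $\mathcal{P}$-systems under $\preceq$, using Lemma~\ref{lem:ExistencePSystems} for non-emptiness and the componentwise union $A_P=\bigcup_j A_P^j$ as the upper bound of a chain. The paper simply asserts the chain-bound verification as ``an observation,'' whereas you spell out the details (contractibility of the directed union, persistence of apexes via the apex-isotropy characterization, and the finitary reduction for the remaining axioms), all of which are sound.
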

\begin{proof}
By Lemma~\ref{lem:ExistencePSystems}, there exists a  $\mathcal{P}$-system of contracting trees for $X$.  It is an observation  that every chain in the (non-empty) poset of $\mathcal{P}$-systems of contracting trees has an upper bound; namely if $\{\mathcal{A}^\alpha\colon \alpha\in I\}$ is a chain of $\mathcal{P}$-systems of contracting trees for $X$, then $\{A_P \colon P\in \mathcal{P}\}$, where $A_P=\bigcup_\alpha A^{\alpha}_P$, is $\mathcal{P}$-system of contracting trees and an upper bound of the given chain. The statement follows by  Zorn's Lemma.
\end{proof}

\begin{lemma}[Maximal implies Spanning]\label{lem:MaximalImpliesSpanning}
Suppose that $(\mathcal{F}, \subseteq)$ satisfies the ascending chain condition.
Suppose that $\mathcal{P}$ is a minimal collection of subgroups generating $\mathcal{F}$, and suppose that each $P\in \mathcal{P}$ is a maximal element of $\mathcal{F}$. 
Then every maximal  $\mathcal{P}$-system of contracting trees for $X$ is a spanning $\mathcal{P}$-system of contracting trees.
\end{lemma}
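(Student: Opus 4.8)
The plan is to argue by contradiction: suppose $\mathcal{A}=\{A_P\colon P\in\mathcal{P}\}$ is a maximal $\mathcal{P}$-system of contracting trees that is \emph{not} spanning. Then there is some $G$-orbit of a $0$-cell of $X$ which does not meet $\bigcup_{P\in\mathcal{P}}\bigcup_{g\in G} gA_P$; pick a representative $0$-cell $u$ in that orbit. The goal is to produce a proper extension of $\mathcal{A}$, contradicting maximality. First I would apply Lemma~\ref{lem:AscendingPathToMaximal} (whose hypotheses are exactly the ones assumed here, the ascending chain condition plus $\mathcal{P}$ a generating collection of maximal subgroups): there is an ascending path $u=w_0,w_1,\dots,w_\ell$ where $G_{w_\ell}$ is a maximal subgroup of $\mathcal{F}$, hence conjugate to some $P\in\mathcal{P}$, say $G_{w_\ell}=gPg^{-1}$.

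The key step is then to attach (a translate of) the tail of this ascending path to the tree $A_P$ to obtain a strictly larger tree $A_P'$, while preserving the three defining properties of a contracting tree and property~(2) of a $\mathcal{P}$-system. Concretely, one wants to enlarge $A_P$ along an ascending path from the apex of $A_P$ (or from $w_\ell$) so as to absorb the orbit of $u$. The cleanest route: since $w_\ell$ and the apex $v_P'$ of $A_P$ both have isotropy the maximal subgroup $P$ (up to translation, so translate so $G_{w_\ell}=P$), Lemma~\ref{lem:AscendingPathBetweenMaximal} gives an ascending path inside $X^P$ between $w_\ell$ and the apex of $A_P$. Concatenating with the reversed path $w_\ell,\dots,w_0=u$ gives an ascending path from $u$ into the apex of $A_P$. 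Now I would walk along this concatenated path starting from the apex and greedily add $0$-cells and $1$-cells to $A_P$ as long as doing so does not violate property~(2) of a contracting tree (at most one vertex per $G$-orbit) nor property~(2) of a $\mathcal{P}$-system ($gT_P\cap T_Q\neq\emptyset\Rightarrow P=Q,\ g\in P$); since each added vertex lies on an ascending path to the apex, property~(1) of a contracting tree is automatic, and property~(3) is redundant as noted after its definition. The point is that the orbit of $u$ is currently \emph{not} represented in any $gA_Q$, so at the moment we first need to add a vertex in the orbit of $u$ there is no obstruction from property~(2); thus the new tree strictly contains $A_P$ and $u$'s orbit becomes represented, contradicting maximality. (If some vertex strictly before $u$ along the path is already represented, we still get a \emph{proper} extension because at least one new vertex gets added; the contradiction with maximality still holds, and one then iterates — but a single round already contradicts maximality, so no iteration is needed.)

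The step I expect to be the main obstacle is verifying that the enlargement can be carried out so that \emph{all} the contracting-tree and $\mathcal{P}$-system axioms persist simultaneously — in particular that adding a translate of an initial segment of the ascending path does not create a collision $gA_P'\cap A_Q'\neq\emptyset$ with $(g,Q)$ outside the allowed set, and does not put two $0$-cells of the same $G$-orbit into the enlarged tree. This requires a careful bookkeeping argument: one adds vertices one at a time, stopping (for that vertex) whenever its $G$-orbit already appears in the current system, and observing that because $u$'s orbit is unrepresented one is guaranteed to genuinely grow $A_P$ before any forced stop. Once this local surgery is shown to respect all the axioms, the contradiction with maximality is immediate, so every maximal $\mathcal{P}$-system must be spanning.
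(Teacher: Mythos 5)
Your overall plan---argue by contradiction, build an ascending path from an unrepresented orbit into the union of the trees, and graft part of it onto one of the trees---matches the paper's strategy, and the use of Lemmas~\ref{lem:AscendingPathToMaximal} and~\ref{lem:AscendingPathBetweenMaximal} is exactly right. But the step you yourself flag as ``the main obstacle,'' the greedy enlargement, has a real gap as stated, and it is precisely the step that needs an idea.

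You walk from the apex of $A_P$ toward $u$ and assert that ``at least one new vertex gets added'' before you are forced to stop. This is not automatic. Suppose the very first $0$-cell after the apex already has its $G$-orbit represented in some $gA_Q$. You cannot add it: if $Q\neq P$ this violates axiom~(2) of a $\mathcal{P}$-system, and if its orbit is already represented inside $A_P$ it violates axiom~(2) of a contracting tree. Once that vertex is refused, the $1$-cell joining it to the rest of the path is useless, so nothing further on the path can be attached while keeping a connected subcomplex; the procedure stalls having added nothing, $u$'s orbit remains unrepresented, and no contradiction with maximality is obtained. One could try to repair this by translating the tail of the path so it emanates from the representative already in the system, but then one would have to switch to extending $A_Q$ rather than $A_P$, and iterate; it is then not clear that this terminates, nor that the $\mathcal{P}$-system axiom is preserved at every stage. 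Your parenthetical remark does not resolve this.

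The paper side-steps the issue with a cleaner device: it does not fix the ascending path in advance. Among \emph{all} ascending paths with initial $0$-cell in the $G$-orbit of $u$ and terminal $0$-cell in $T=\bigcup_{P\in\mathcal{P}}T_P$ (your Step~1 shows this set is nonempty), choose one of \emph{minimal length}, with terminal point in $T_Q$ say. Minimality forces the path to avoid every $G$-translate of $T$ except at its terminal vertex: if some interior vertex $w_i$ lay in $gT$, then translating the initial segment $w_0,\dots,w_i$ by $g^{-1}$ would give a strictly shorter ascending path from the orbit of $u$ into $T$. Consequently $T_Q\cup\gamma$ is a contracting tree, and replacing $T_Q$ by $T_Q\cup\gamma$ yields a $\mathcal{P}$-system properly extending the original, which is the desired contradiction. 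Note that $Q$ need not coincide with the subgroup $P$ you located via Lemma~\ref{lem:AscendingPathToMaximal}; allowing the terminal tree to be whichever one the minimal path actually reaches is part of what makes the bookkeeping disappear. Replace the greedy walk by this minimal-length argument and the proof goes through.
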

\begin{proof} We show that if a a $\mathcal{P}$-system of contracting trees is not spanning, then it not a maximal system.  Let $\{T_P \colon P \in \mathcal{P}\}$ be a $\mathcal{P}$-system of contracting trees for $X$.  Let $u$ be a $0$-cell of $X$ and suppose that $\bigcup_{P\in \mathcal{P}} T_P$ has no $0$-cell in the $G$-orbit of $u$.

\begin{step}
There exists an ascending path in $X$ with initial point a $0$-cell in the $G$-orbit of $u$ and terminal point in $T=\bigcup_{P\in \mathcal{P}} T_P$.
\end{step}
By Lemma~\ref{lem:AscendingPathToMaximal}, there is an ascending path from $u$ to a cell $v$ which has isotropy a  maximal subgroup in $\mathcal{F}$. Then $G_v=gPg^{-1}$ where $g\in G$ and $P\in \mathcal{P}$. Up to changing $u$  by a $0$-cells in its $G$-orbit, we can assume that $G_v=P$.  Let $w$ be an apex of the contracting tree $T_P$. Then, by definition, the isotropy of $w$ is $P$. By Lemma~\ref{lem:AscendingPathBetweenMaximal}, there is an ascending path from $v$ to $w$. Concatenating the ascending paths from $u$ to $v$, and the one from $v$ to $w$, the statement follows.

\begin{step}
Choose an ascending path with initial point a $0$-cell in the $G$-orbit of $u$, and terminal point in $T$. Suppose that the chosen ascending path is of minimal length among all posibilities.  Let $\gamma$ be the image of this ascending path and suppose that its  terminal point  belongs to $T_Q$ where  $Q\in \mathcal{P}$. Let 
    \[ T_Q^+ = T_Q \cup \gamma .\]
Then $\{T_P \colon P\in \mathcal{P}-\{Q\}\}\cup \{T_Q^+\}$ is a $\mathcal{P}$-system of contracting trees for $X$ that propertly extends the system 
$\{T_P \colon P\in \mathcal{P}\}$.
\end{step}

The minimality of the ascending path and the assumption that $T_Q$ is contracting tree implies that $T_Q^+$ is a contractible subcomplex of $X^{(1)}$. 
The minimality of the ascending path implies that the only $0$-cell of $\gamma$ whose orbit intersects $T$ is the terminal vertex of the path. This together with  $\gamma$ being the image of an ascending path imply that  $T^+_Q$ is a contracting tree. 
Let us verify that $\{T_P \colon P\in \mathcal{P}-\{Q\}\}\cup \{T_Q^+\}$ is a $\mathcal{P}$-system of contracting trees for $X$.
It is enough to show that if $P\in \mathcal{P}-\{Q\}$ and   $g\in G$ then  $T_P \cap gT_Q^+ = \emptyset$.  If $T_P \cap gT_Q^+ \neq \emptyset$ then $T_P\cap g\gamma \neq \emptyset$. Since $P\neq Q$, this would contradict the minimality of the ascending path with image $\gamma$.
\end{proof}

\begin{proof}[Proof of Proposition~\ref{prop:FF0andAAC}]
Let $\mathcal{P}$ be a finite and  minimal collection of subgroups generating $\mathcal{F}$, and suppose that each $P\in \mathcal{P}$ is a maximal element of $\mathcal{F}$.  By Lemmas~\ref{lem:ExistenceMaximalSystem} and~\ref{lem:MaximalImpliesSpanning}, there is a spanning $\mathcal{P}$-system of contracting trees for $X$. Then the conclusion follows by invoking Lemma~\ref{lem:QuotientSpanningSystem}.  \end{proof}

\section{Proofs of Theorems~\ref{Browns:homotopy:Criterium} and~\ref{Brown:Corollary:ACC}}\label{sec:maintheorem}

\subsection{Proof of Theorem~\ref{Browns:homotopy:Criterium}}

By \cref{reduction:F:isotropy}, replacing $X$ with $\hat X$, we can assume that $X$ is $\calF$-$n$-good, it has isotropy groups in $\calF$,  the $n$-skeleton $X^{(n)}$ is the $n$-skeleton of a model for $E_\calF G$, and $\{X_\alpha\}_{\alpha\in I}$ is a filtration of finite $n$-type by $G$-subcomplexes. As a consequence~\cref{thm:haefliger}(4), we can still 
assume that $X$ has $G$-finite $0$-skeleton. Moreover, $X$ can be assumed to be equal to its $n$-skeleton in view of \Cref{remark:Fngood}. Thus, for the rest of the proof $X$ will be assumed to be the $n$-skeleton of a model for $E_\calF G$.

Let us prove (\ref{Brown:Criterium:01})$\Longrightarrow $ (\ref{Brown:Criterium:02}). Assume $G$ is of type $\ffn$. Let $Y$ be a $G$-witness for $\ff{n}$ of dimension $n$. By~\cref{prop:homotopy:classifying:skeletons}, there exist $G$-maps $f\colon Y\to X$ and $g\colon X\to Y$. We will also denote by $g$ the restriction $g\colon X_\alpha \to Y$ for $\alpha \in I$.
Since $Y$ is $G$-finite, there exists $\alpha_0 \geq 0$ such that the image of $f$ is contained in $X_\beta$ for all $\beta\geq \alpha_0$. 

Let $\alpha\in I$ and $k<n$. Choose $\beta > \max\{\alpha, \alpha_0\}$. Then \cref{prop:homotopy:classifying:skeletons} implies that the diagram

 \[
 \xymatrix{X_\alpha^{(n-1)} \ar[d]^{g} \ar@{^{(}->}[r] & X_\beta \\
Y  \ar@{=}[r]& Y \ar[u]^f}
 \]
 is commutative up to $G$-homotopy. Taking fixed points with $H\in \calF$ in this diagram yields a (non-equivariant) commutative square up to homotopy. Hence, for all $H\in \calF$, we have a  commutative (up to an automorphism of $\pi_k( X_\beta^H)$) square
 \[
 \xymatrix{\pi_k((X_\alpha^{(n-1)})^H) \ar[r] \ar[d]^{g_*} & \pi_k( X_\beta^H) \\
\pi_k(Y^H) \ar@{=}[r]& \pi_k(Y^H)=0 \ar[u]^{f_*}}
 \]
 Therefore $\pi_k((X_\alpha^{(n-1)})^H) \to  \pi_k( X_\beta^H)$ is the zero homomorphism.
 
 Since the homomorphism $\pi_k((X_\alpha^{(n-1)})^H)\to \pi_k(X_\alpha^H)$, induced by inclusion, is an isomorphism for $k<n-1$, and is surjective for $k=n-1$, it follows that $\pi_k(X_\alpha ^H)\to \pi_k(X_\beta ^H)$ vanishes for all $k\leq n-1$ and $H\in \calF$. Thus for each $k<n$ we have that the filtration $\{X_\alpha\}_{\alpha \in I}$ is \esstriv{k}{\calF}.

\vskip 10pt

Assume $X$ as $G$-finite $0$-skeleton. We will prove (\ref{Brown:Criterium:02}) $\Longrightarrow$ (\ref{Brown:Criterium:01}).
Assume that, for each $k<n$, the filtration $\{X_\alpha\}_{\alpha \in I}$ is \esstriv{k}{\calF}. By induction one constructs $G$-witnesses $Y_i$ of $\ff{i}$ for $0\leq i\leq n$.

\textbf{i=0.} Since $X$ has finite $0$-skeleton, $Y_0=X^{(0)}$ is a witness for $\ff{0}$.

\textbf{i=1.} Since $Y_0=X^{0}$ is $G$-finite, there exists $\alpha_0$ such that $X^{(0)}$ is contained in $X_{\alpha_0}$. Consider $\alpha_1$ such that $\pi_0(X_{\alpha_0}^H)\to \pi_0(X_{\alpha_1}^H)$ is constant for every $H\in \calF$. Then any two points in the $0$-skeleton of  $X_{\alpha_0}^H$ can be connected by a path in $X_{\alpha_1}^H$, for all $H\in \calF$. Since the $0$-skeleton of $X_{\alpha_0}^H$ and $X_{\alpha_1}^H$ are equal, this implies that $X_{\alpha_1}^H$ is connected for every $H\in \calF$. Thus $Y_1=X_{\alpha_1}^{(1)}$ is a $G$-witness for $\ff{1}$.

Note if $\beta>\alpha_1$ then $X_\beta^{(1)}$ is a $G$-witness for $\ff{1}$. In particular $(X_\beta^{(1)})^H$ is connected for every $H\in \calF$, and therefore we can omit base points for higher homotopy groups of $\pi_i(X_\beta^H)$.

\textbf{i=2.}  Let $f_1\colon X^{(1)}_{\alpha_1}\to X^{(2)}$ denote the inclusion map. By~\Cref{prop:homotopy:classifying:skeletons} there is a $G$-map $g_1\colon X^{(1)} \to  X^{(1)}_{\alpha_1}$ such that $f_1\circ g_1\colon X^{(1)}\to X^{(2)}$  and the inclusion $X^{(1)}\to X^{(2)}$ are $G$-homotopic functions. Denote this $G$-homotopy by $H_1\colon X^{(1)}\times [0,1] \to X^{(2)}$ and assume that it is cellular. Therefore each $1$-cell of $X^H$ (resp. $X_\alpha^H$ with $\alpha \in I$) is homotopic to a $1$-cell of $X_{\alpha_1}^H$. 

Consider $\alpha_2$ such that $\pi_1(X_{\alpha_1}^H)\to \pi_1(X_{\alpha_2}^H)$ is trivial for every $H\in \calF$ and, in addition, $X_{\alpha_2}$ contains the image $H_1 (X^{(0)}\times [0,1])$. Observe that the second condition on the choice of $X_{\alpha_2}$ can be realized since $X^{(0)}$ is $G$-finite and hence 
$H_1 (X^{(0)}\times [0,1])$ is a $G$-finite subspace of $X^{(1)}$. 

Let $e$ be a $1$-cell of $X_{\alpha_2}$. The image $H_1(\partial(e\times[0,1]))$ is a loop in $X_{\alpha_2}$ since the image $H_1 (X^{(0)}\times [0,1])$ is contained in $X_{\alpha_2}$. Use this loop to  attach a $2$-cell to $X_{\alpha_2}^{(2)}$ with isotropy equal to the stabilizer $G_e$, and extend using the action of $G$. We can repeat the same procedure for a finite set of representatives of equivariant $1$-cells of $X_{\alpha_2}^{(1)}$ to obtain a $G$-finite $2$-dimensional $G$-CW-complex $Y_2$.

Let's verify that $Y_2$ is a $G$-witness of $\ff{2}$. 
Let $H \in \calF$.
The construction of $Y_2$ guarantees that the inclusion  $Y_2^{(1)}\to Y_2$ is $G$-homotopic to a map with image in $X_{\alpha_1}^{(1)}$. Therefore any loop in $Y_2^H$ is homotopic to a map with image in $(X_{\alpha_1}^{(1)})^H$.
On the other hand, any loop in $X_{\alpha_1}^H$ is null-homotopic in $Y_2^H$ since $\pi_1(X_{\alpha_1}^H)\to \pi_1(X_{\alpha_2}^H)$ is trivial and $X_{\alpha_2}^H$ is a subcomplex of $Y_2^H$. It follows that any loop in $Y_2^H$ is null-homotopic.

\begin{remark}\label{remark:finite:cero:skeleton}
It is worth noticing that here we are using the fact that $X$ has $G$-finite $0$-skeleton. In fact, without this assumption, we could have $0$-cells in $X_{\alpha_2}$ that do not belong to $X_{\alpha_1}$. Then the  construction does not work as we will have to add $1$-cells to $X_{\alpha_2}$ to homotope those $0$-cells into $X_{\alpha_1}$. This will create new loops that are not  homotoped into  $X_{\alpha_1}$.
\end{remark}

\textbf{i=3.}
 Consider $\alpha_3$ such that $\pi_2(X_{\alpha_2}^H)\to \pi_2(X_{\alpha_3}^H)$ is constant for every $H\in \calF$. By \cref{prop:homotopy:classifying:skeletons} we have maps $f_2\colon Y_2 \to X^{(2)}$ and $g_2\colon X^{(2)}\to Y_2$ such that the composition $g_2\circ f_2\colon Y_2^{(1)} \to Y_2$ and the inclusion map $Y_2^{(1)} \hookrightarrow Y_2$ are $G$-homotopic, via a $G$-homotopy $H_2\colon Y_2^{(1)}\times [0,1] \to Y_2$.

Analogously to the previous case, we will  attach finitely many $G$-orbits of $3$-cells to $Y_2$ in order to obtain a $G$-witness  $Y_3$ for $\ff{3}$. We will do this in three steps.

\setcounter{step}{0}
\begin{step}\label{step:extend:homotopy}
Attach finitely many equivariant $3$-cells to $Y_2$ to obtain a $\calF$-$G$-CW-complex $Z_3$ and a $G$-homotopy $H_2'\colon Y_2 \times I \to Z^3$ that extends $H_2$.
\end{step}
\begin{proof}
Let $e^2$ be a $2$-cell in $Y_2$ with attaching map $\varphi\colon S^1 \to Y_2^{(1)}$. Hence we want to extend the homotopy $H_2\colon Y_2^{(1)}\times I \to Y_2$.
 In order to do this, consider $\varphi\times Id=\varphi'\colon S^1\times I \to Y_2^{(1)}\times I$ as well as the composition $H_2\circ\varphi'\colon S^1\times I \to Y_2$. This last function can be extended to a map $\psi\colon S^2=(S^1\times I)\cup (e^2\times \{0\})\cup (e^2\times \{1\}) \to Y_2$ using $\varphi'$, the inclusion $e^2\to Y_2$, and the restriction of  $g_2 \circ f_2$ to $e_2$. We have that the $\psi$ extends to a function $D^3 \to Y_2$ if and only if $\psi$ is null-homotopic. This is not always the case, therefore we attach a $3$-cell to $Y_2$ using $\psi$ as attaching map. Now we extend using the action of $G$, and we finish after a finite number of attachings since we only have a finite number of orbits of $2$-cells in $Y_2$.
\end{proof}

\begin{step}\label{step:finitely:generated}
There are  finitely many cellular $G$-maps $\varphi_i\colon S^2\times G/K_i \to Z_3$ with the following properties:
\begin{enumerate}
    \item Each $K_i$ is in the family $\calF$;
    \item The $G$-action on the sphere $S^2$ is trivial, and diagonal on $S^2\times G/K_i$; and 
 \item The induced morphism
$\bigoplus_i H_2^\calF(S^2\times G/K_i) \to H_2^\calF(Z_3)$ is surjective.
\end{enumerate}
In particular, since $H_2^\calF(S^2\times G/K_i)  \cong  \dbZ[-, G/K_i]$, the module $H_2^\calF(Z_3)$ is a finitely generated $\orf{G}$-module.

\end{step}
\begin{proof}
We have the following diagram of $\orf{G}$-modules 
\[
\xymatrix{ & C_3(X_{\alpha_3})\cong \bigoplus\limits_{_{3-\text{cells mod }  G}}\dbZ[-,G/G_\sigma] \ar[d]^{\partial}\\
 C_2(X_{\alpha_2}^{(2)}) \ar@<1ex>[d]^{(g_2)_*}_{.}\ar[r] & C_2(X_{\alpha_3}^{(2)})\ar@<1ex>[d]^{(g_2)_*}_{.}\\
C_2(Y_2) \ar@<1ex>[u]^{(f_2)_*}_{.} \ar@{->>}[r] & C_2(Z_3) \ar@<1ex>[u]^{(f_2)_*}_{.}}
 \]
 where the horizontal arrows are the homomorphisms induced by inclusion. Let $\sigma$ be a cycle in $C_2(Z_3)$. So $\sigma$ is also a cycle in $C_2(Y_2)$. Then $(f_2)_*(\sigma)$ is a cycle in $C_2(X_{\alpha_2}^{(2)})$, and also is a cycle in $C_2(X_{\alpha_3}^{(2)})$. Since the horizontal upper map vanishes when we descend to homology, we have that $(f_2)_*(\sigma)$ is in the image of $\partial$ (which is the boundary map in the Bredon cellular chain complex). Therefore $(g_2)_*\circ(f_2)_*(\sigma)\in C_2(Z_3)$ is in the image of $(g_2)_*\circ\partial$, which is finitely generated since it is covered by a finitely generated free $\orf{G}$-module. Finally, by \cref{step:extend:homotopy}, the diagram
 \[
 \xymatrix{X_{\alpha_2^{(2)}} \ar@{^{(}->}[r]& X_{\alpha_3^{(2)}} \ar[d]^{g_2}\\
Y_2 \ar[u]^{f_2} \ar@{^{(}->}[r]& Z_3}
 \]
 is commutative up to $G$-homotopy, hence we have that $[(g_2)_*\circ(f_2)_*(\sigma)]=[\sigma]$ in $H_2^{\calF}(Z_3)$. Therefore the induced homomorphism $C_3(X_{\alpha_3})\to H_2^{\calF}(Z_3)$ is surjective.
 
 Since $X_{\alpha_3}$ is $G$-finite, it has a finite number of $G$-orbits of $3$-cells with representatives $\sigma_1,\dots, \sigma_m$. Let $K_i$ be the $G$-isotropy of $\sigma_i$. Let $\psi_i\colon S^2\times G/K_i \to Z_3$ be the attaching map of $\sigma_i \times G/K_i$ into $X_{\alpha_3}$. Then the maps $g_2\circ \psi_i\colon S^2\times G/K_i \to X_{\alpha_3}$ induced an surjective morphism 
 $ \bigoplus_i  H_2^\calF(S^2\times G/K_i) \to H_2^{\calF}(Z_3)$.
 
 We provide an sketch of this fact and leave details to the reader. Since $X$ has isotropy groups in $\calF$, it follows that $K_i$ is in $\calF$.  Since $H_2^\calF(S^2\times G/K_i)  \cong  \dbZ[-, G/K_i]$ and $C_3^{\calF}(X_{\alpha_3})\cong \bigoplus_i \dbZ[-, G/K_i]$, there is an  isomorphism 
 $ \bigoplus_i  H_2^\calF(S^2\times G/K_i) \to C_3^{\calF}(X_{\alpha_3})$. Hence the morphism $\bigoplus_i H_2^\calF(S^2\times G/K_i) \to C_3^{\calF}(X_{\alpha_3}) \to H_2^{\calF}(Z_3)$ is surjective. In fact the morphism 
 $\bigoplus_i H_2^\calF(S^2\times G/K_i) \to H_2^{\calF}(Z_3)$ is induced by the restriction maps.

\begin{step}\label{step:construct:witness}
 We can attach finitely many $G$-orbits of $3$-cells to $Z_3$ to obtain obtain a  $G$-witness $Y_3$ of $\ff{3}$.
\end{step}
\begin{proof}
  For each $i$, attach a $G$-orbit of $3$-cells to $Z_3$ via the attaching map $\varphi_i$ from the previous step to obtain a connected $G$-CW-complex $Y_3$ such that $H_j^{\calF}(Y_3)=0$ for $j=1,2$, and with finitely many $G$-orbits of cells. For all $H\in \calF$,  $Y_3^H$ is simply connected and  $H_2(Y_3^H)=0$. By Hurewicz theorem $\pi_2(Y_3^H)=0$ for all $H\in \calF$. Thus $Y_3$ is a $G$-witness for $\ff{3}$
\end{proof}

In general, for $n\geq 3$, then one obtains $Y_{n+1}$ from $Y_n$ by going through all the same steps as $Y_3$ was obtained from $Y_2$. Therefore and induction argument compludes the proof.
\end{proof}

\subsection{Proof of Theorem~\ref{Brown:Corollary:ACC}}

We need to prove that statement~\eqref{Brown:Criterium:02} implies~\eqref{Brown:Criterium:01} under the assumptions on the family $\calF$. The strategy is to modify the $G$-CW-complex $X$ so that the $0$-skeleton is $G$-finite via Proposition~\ref{prop:FF0andAAC} and then apply Theorem~\ref{Browns:homotopy:Criterium}.

Consider $Y$ the quotient complex of $X$ obtained applying~ \Cref{prop:FF0andAAC}. Then $Y$ is the $n$-skeleton of a model for $E_\calF G$ with $G$-finite $0$-skeleton. Also, the quotient of the filtration of $X$ will lead to a filtration $\{Y_\alpha\}_{\alpha \in I}$ of $Y$. It only remains to prove that $\{Y_\alpha\}_{\alpha \in I}$ is \esstriv{k}{\calF} for all $k<n$. Take a fixed $\alpha\in I$. Consider the map $\gamma \colon S^k\to Y_\alpha^H$, where $H\in \calF$. On the other hand  there exist  $G$-maps $f\colon X\to Y$ and $g\colon Y\to X$ such that the compositions are $G$-homotopic to the corresponding identity maps. Next, by $G$-compactness of $Y_\alpha$ there exists an $\alpha'\in I$ such that the image $g(Y_\alpha)$ is contained in $X_{\alpha'}$. Define $\beta'$ the index associated to $\alpha'$ via the fact that $\{X_\alpha\}$ is  \esstriv{k}{\calF}. Consider $\beta$ such that the image $f(X_{\beta'})$ is contained in $Y_\beta$. Now it is easy to see that the composition $ S^k \to Y_\alpha^H \to X_{\alpha'}^H \to X_{\beta'}^H\to  Y_\beta^H$ is null-homotopic. Since this null-homotopy does not depend on $\gamma$ (only on $k$ and $\alpha)$, then we proved that $\{Y_\alpha\}_{\alpha \in I}$ is  \esstriv{k}{\calF} for all $k<n$. 

To conclude the proof, invoke Theorem~\ref{Browns:homotopy:Criterium} with $Y$ and $\{Y_\alpha\}_{\alpha\in I}$ respectively.

\subsection{On a theorem of Fluch and Witzel} 

Brown's criterium for $\ffpn$ (main theorem of \cite{FW13}) can be also reproved following the strategy used in the proof of~\Cref{Browns:homotopy:Criterium}. For completeness we state the following theorem. The corresponding definition of \hesstriv{k}{\calF}  is the obvious one.

\begin{theorem}\label{Browns:homology:Criterium}\cite{FW13}
 Let $G$ be a group and $\calF$ a family of subgroups of $G$. Let $X$ be an $\calF$-$n$-good $G$-CW-complex, and let $\{X_\alpha\}_{\alpha \in I}$ be a filtration of finite $n$-type by $G$-subcomplexes.  Then the following statements are equivalent:
 \begin{enumerate}
     \item\label{Brown:homology:Criterium:01} $G$ is of type $\ffpn$.
     \item\label{Brown:homology:Criterium:02} For each $k<n$ we have that the filtration $\{X_\alpha\}_{\alpha \in I}$ is \hesstriv{k}{\calF}.
 \end{enumerate}
\end{theorem}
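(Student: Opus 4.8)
The strategy is to transcribe the proof of \cref{Browns:homotopy:Criterium} into homological algebra, replacing homotopy groups of fixed point sets by Bredon homology $\orf{G}$-modules and equivariant cell attachments by finitely generated free $\orf{G}$-modules. As in that proof, I would begin with the reduction of \cref{reduction:F:isotropy}: replacing $X$ by $\hat X$ and invoking \cref{thm:haefliger}(4), we may assume that $X=X^{(n)}$ is the $n$-skeleton of a model for $E_\calF G$, that every isotropy group of $X$ lies in $\calF$, and that $\{X_\alpha\}_{\alpha\in I}$ is a filtration of finite $n$-type by $G$-subcomplexes. Then the augmented Bredon cellular chain complex $C_*(X)\to\underline{\dbZ}$ is a complex of free $\orf{G}$-modules exact in degrees $\le n-1$ (because $X^H$ is $(n-1)$-connected for every $H\in\calF$), i.e.\ a partial free resolution of the constant $\orf{G}$-module $\underline{\dbZ}$; moreover $C_*(X)=\varinjlim_\alpha C_*(X_\alpha)$ with each $C_k(X_\alpha)$ a finitely generated free $\orf{G}$-module for $k\le n$. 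Recalling that $H_k^\calF(X_\alpha)(G/H)=H_k(X_\alpha^H)$, the filtration is \hesstriv{k}{\calF} exactly when for every $\alpha$ there is $\beta\ge\alpha$ for which the morphism of $\orf{G}$-modules $\widetilde H_k^\calF(X_\alpha)\to\widetilde H_k^\calF(X_\beta)$ induced by inclusion is zero; working with augmented complexes throughout makes the cases $k=0$ and $k\ge1$ uniform.

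For \eqref{Brown:homology:Criterium:01}$\Rightarrow$\eqref{Brown:homology:Criterium:02}, assume $G$ is $\ffpn$ and fix an augmented free $\orf{G}$-resolution $F_*\to\underline{\dbZ}$ with $F_k$ finitely generated for $k\le n$. Since the augmented complexes $C_*(X)\to\underline{\dbZ}$ and $F_*\to\underline{\dbZ}$ are complexes of projectives exact in degrees $\le n-1$, the comparison theorem, applied degree by degree, produces chain maps $\varphi\colon F_*\to C_*(X)$ and $\psi\colon C_*(X)\to F_*$ over $\mathrm{id}_{\underline{\dbZ}}$ together with a chain homotopy $h$ satisfying $\mathrm{id}_{C_*(X)}-\varphi\psi=\partial h+h\partial$ in degrees $\le n-1$. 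Given $\alpha\in I$, finite generation of $F_k$ and $C_k(X_\alpha)$ for $k\le n$ together with $C_*(X)=\varinjlim C_*(X_\gamma)$ let us choose $\beta\ge\alpha$ with $\varphi(F_k)\subseteq C_k(X_\beta)$ for all $k\le n$ and $h(C_k(X_\alpha))\subseteq C_{k+1}(X_\beta)$ for all $k\le n-1$. If $z$ is a $k$-cycle of $C_*(X_\alpha)$ with $k<n$, then $\psi(z)$ is a $k$-cycle of $F_*$, hence $\psi(z)=\partial w$ for some $w\in F_{k+1}$ because $H_k(F_*)=0$; consequently $z-\varphi\psi(z)=\partial h(z)$ and $\varphi\psi(z)=\partial\varphi(w)$ with $h(z),\varphi(w)\in C_{k+1}(X_\beta)$, so $z$ is a boundary in $C_*(X_\beta)$. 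Thus $\widetilde H_k^\calF(X_\alpha)\to\widetilde H_k^\calF(X_\beta)$ vanishes, which is \eqref{Brown:homology:Criterium:02}.

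For \eqref{Brown:homology:Criterium:02}$\Rightarrow$\eqref{Brown:homology:Criterium:01}, I would construct, by induction on $0\le m\le n$, indices $\alpha_0\le\cdots\le\alpha_m$ together with an augmented free $\orf{G}$-chain complex $D^{(m)}_*\to\underline{\dbZ}$ and a chain map $D^{(m)}_*\to C_*(X_{\alpha_m})$ over $\underline{\dbZ}$ such that $D^{(m)}_k$ is finitely generated free for $k\le m$, the augmented complex $D^{(m)}_*$ is exact in degrees $\le m-1$, and $D^{(m)}_*\to C_*(X_{\alpha_m})$ is an isomorphism on $\widetilde H_k^\calF$ for $k<m$. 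The base case $m=0$ uses \hesstriv{0}{\calF} and the finite $0$-type of the filtration to locate $\alpha_0$ for which $C_0(X_{\alpha_0})$ surjects onto $\underline{\dbZ}$, and sets $D^{(0)}_0=C_0(X_{\alpha_0})$. For the step $m\to m+1$: the $\orf{G}$-module $Z_m$ of $m$-cycles of $D^{(m)}_*$ maps into $C_m(X_{\alpha_m})$, and by \hesstriv{m}{\calF} we may pass to $\alpha_{m+1}\ge\alpha_m$ so that every such cycle becomes a boundary in $C_*(X_{\alpha_{m+1}})$; then a finite-generation argument analogous to the one in \cref{step:finitely:generated} of the proof of \cref{Browns:homotopy:Criterium} --- comparing with $D^{(m)}_*\to C_*(X_{\alpha_{m+1}})$ and using that $C_{m+1}(X_{\alpha_{m+1}})$ is a finitely generated $\orf{G}$-module --- shows that finitely many elements of $Z_m$ already generate $H_m^\calF(D^{(m)}_*)$ as an $\orf{G}$-module. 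Attaching a finitely generated free module $D^{(m+1)}_{m+1}$ onto $D^{(m)}_m$ mapping onto those generators (and compatibly into $C_{m+1}(X_{\alpha_{m+1}})$) completes the step. After $n$ steps, $D^{(n)}_*$ is a partial free resolution of $\underline{\dbZ}$ that is finitely generated through degree $n$, so $G$ is $\ffpn$. In contrast with the homotopy case (see \cref{remark:finite:cero:skeleton}), no hypothesis on the $0$-skeleton of $X$ is needed here: passing to homology removes the obstruction coming from fundamental group elements created when attaching $1$-cells.

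As in the proof of \cref{Browns:homotopy:Criterium}, the main obstacle is the finite-generation claim inside the inductive construction: the module $Z_m$ of $m$-cycles is typically not finitely generated, since the category of $\orf{G}$-modules is rarely Noetherian, so one must genuinely compare against the finitely generated module $C_{m+1}(X_{\alpha_{m+1}})$ and exploit the finite $n$-type of the filtration in order to conclude that finitely many cycles suffice to generate the homology. I would isolate this as a stand-alone lemma --- essentially Brown's criterion for $\orf{G}$-modules --- and then deduce \cref{Browns:homology:Criterium} by feeding it the partial resolution $C_*(X)$; this simultaneously gives a new proof of the theorem of Fluch and Witzel.
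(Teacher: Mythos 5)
Your proposal follows essentially the same approach as the paper's own sketch: reduce via the Haefliger construction, transcribe the argument of Theorem~\ref{Browns:homotopy:Criterium} into the augmented Bredon chain complex, and for the converse direction build a finitely generated partial free resolution inductively, with the finite-generation of the $m$-th homology module as the key point and the base case handled as in~\cite{FW13}. Your phrasing of the forward direction directly via the comparison theorem and a chain homotopy is a clean repackaging of the paper's topological $f,g$-maps, and your observation about why the $G$-finite $0$-skeleton hypothesis disappears matches the paper's Remark~\ref{remark:finite:cero:skeleton}; both proofs are at the same level of detail (a sketch), so there is no substantive divergence.
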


\begin{proof}[Sketch of proof]
We start applying the Haefliger construction to $X$ and we will denote $\mathbf{C}$ the augmented Bredon chain complex of $X$. Hence $\mathbf{C}$ is a $G$-witness for $\ffpn$, and we have a filtration $\{C_\alpha\}_{\alpha \in I}$, where $C_\alpha$ is the Bredon chain complex of $X_\alpha$. To prove (1) implies (2), we can apply exactly the same argument from the proof of \Cref{Browns:homotopy:Criterium}.

In order to prove (2) implies (1), we have to construct witnesses for $\ffp{i}$ for all $0\leq i\leq n$, this is, partial projective resolutions of length $i$ such that all of its projective modules are finitely generated. This can be done by induction. For the induction base, $i=0$, we proceed exactly as in \cite[Section~3]{FW13}. Now the inductive step works exactly the same as in the case $i=3$ in the proof of \Cref{Browns:homotopy:Criterium}, making the obvious changes, i.e. using $\mathbf{C}$ and $\{\mathbf{C}_\alpha\}_{\alpha \in I}$ instead of $X$ and $\{X_\alpha\}_{\alpha \in I}$, and we skip Step 3.
\end{proof}

\begin{remark}
In contrast with Theorem~\ref{Browns:homotopy:Criterium}, note that the statement of Theorem~\ref{Browns:homology:Criterium} does not include as a hypothesis the assumption that $X$ has $G$-finite $0$-skeleton.  We refer the reader to  Remark~\ref{remark:finite:cero:skeleton} on how $G$-finiteness of $X^0$ is assumed in the proof of Theorem~\ref{Browns:homotopy:Criterium}.
\end{remark}

\section{Speculation and final remarks}

For future reference we record the following corollary of the main theorem. This result provides criteria implying that a group is $G$ is of type $\ff{n-1}$, and if $n\geq 3$ then $G$ is not $\ff{n}$.  

\begin{corollary}\label{cor:fn:vs:fnplusone}
Let $G$ be a group and let $\calF$  be a family of subgroups of $G$.  Let $X$ be $G$-CW-complex such that $X^H$ is contractible for every $H\in \calF$, the stabilizer $G_\sigma$ of each cell $\sigma$ is of type  $(\calF \cap G_\sigma )$-$\f{\infty}$. Assume either 
 \begin{itemize}
     \item $X$ has $G$-finite $0$-skeleton, or
     \item the family $\calF$ is generated by a finite collection of maximal elements, and the poset of subgroups $(\calF, \subseteq)$ satisfies the ascending chain condition.
 \end{itemize}
Let $\{X_j\}_{j\geq 1}$ be a filtration such that each $X_j$ is $G$-finite.  Fix $n\geq 1$ and suppose that for all sufficiently large $j$, the complex $X_{j+1}$ is obtained from $X_j$ by the adjunction of a positive number of $G$-orbits of $n$-cells, up to $G$-homotopy. 
 Then $G$ is of type $\ff{n-1}$, and if $n\geq 3$ then $G$ is not $\ff{n}$.
\end{corollary}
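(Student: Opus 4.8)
The plan is to apply Theorem~\ref{Browns:homotopy:Criterium} (or Theorem~\ref{Brown:Corollary:ACC} in the second case) to the complex $X$ and the filtration $\{X_j\}_{j\geq 1}$. First I would observe that $X$ is $\calF$-$n$-good for every $n$: condition (1) and (2) of $\calF$-$n$-goodness hold because $X^H$ is contractible (in particular nonempty and $(n-1)$-connected) for all $H\in\calF$, and condition (3) holds because each cell stabilizer is of type $(\calF\cap G_\sigma)$-$\f{\infty}$, hence of type $(\calF\cap G_\sigma)$-$\f{m}$ for every $m$. Also, by discarding an initial segment of the filtration we may assume that \emph{every} step $X_{j}\to X_{j+1}$ adds only $G$-orbits of $n$-cells (up to $G$-homotopy); reindexing does not affect the hypotheses. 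Then $\{X_j\}_{j\geq 1}$ is a filtration of finite $n'$-type for every $n'$ since each $X_j$ is $G$-finite, and the hypotheses of Theorem~\ref{Browns:homotopy:Criterium}, resp.\ Theorem~\ref{Brown:Corollary:ACC}, are met in both alternative cases.

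Next I would analyze the essential triviality of the filtration in homotopy. Since each inclusion $X_j\hookrightarrow X_{j+1}$ is, up to $G$-homotopy, the adjunction of $G$-orbits of $n$-cells, taking $H$-fixed points and using part~(3) of Theorem~\ref{thm:haefliger} (after the Haefliger replacement, which Theorem~\ref{Browns:homotopy:Criterium} allows), the inclusion $X_j^H\hookrightarrow X_{j+1}^H$ is, up to homotopy, the adjunction of cells of dimension $\leq n$ (the fixed points of a $G$-orbit of $n$-cells is a CW-complex built from cells of dimension at most $n$). Consequently this inclusion is an isomorphism on $\pi_k$ for $k\leq n-2$ and surjective on $\pi_{n-1}$. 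Chaining these, for each $k\leq n-2$ the map $\pi_k(X_j^H)\to\pi_k(X_{j'}^H)$ is an isomorphism for all $j'\geq j$, so it is certainly not the zero map once this group is nonzero. But $X=\bigcup_j X_j$ has $X^H$ contractible, so $\varinjlim_j \pi_k(X_j^H)=\pi_k(X^H)=0$; combined with the isomorphisms this forces $\pi_k(X_j^H)=0$ for all $j$ and all $k\leq n-2$. Hence the filtration is automatically \esstriv{k}{\calF} for every $k\leq n-2$, i.e.\ for every $k<n-1$. By the direction~\eqref{Brown:Criterium:02}$\Rightarrow$\eqref{Brown:Criterium:01} of Theorem~\ref{Browns:homotopy:Criterium} (resp.\ Theorem~\ref{Brown:Corollary:ACC}), applied with $n-1$ in place of $n$, this yields that $G$ is of type $\ff{n-1}$.

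For the negative statement, suppose toward a contradiction that $n\geq 3$ and $G$ is of type $\ff{n}$. By the direction~\eqref{Brown:Criterium:01}$\Rightarrow$\eqref{Brown:Criterium:02} of Theorem~\ref{Browns:homotopy:Criterium}, the filtration $\{X_j\}$ is \esstriv{n-1}{\calF}: for each $j$ there is $j'\geq j$ with $\pi_{n-1}(X_j^H)\to\pi_{n-1}(X_{j'}^H)$ the zero map for all $H\in\calF$. Now take $H$ to be, say, the trivial subgroup (which lies in $\calF$), and fix $j$ large enough that \emph{all} subsequent steps adjoin $G$-orbits of $n$-cells. Going from $X_j$ to $X_{j+1}$, up to homotopy we attach a positive number of $n$-cells to $X_j$ along maps into $X_j^{(n-1)}$; since $\pi_k(X_j)=0$ for $k\leq n-2$ and $X_j$ is a CW-complex, $X_j$ is homotopy equivalent to a wedge of $(n-1)$-spheres (by minimal cell structure / Hurewicz), so $\pi_{n-1}(X_j)$ is free abelian, and it is nonzero because otherwise $X_j$ would be $(n-1)$-connected, but then adding $n$-cells and taking the union would give a noncontractible space (the union has nontrivial $H_n$ unless the attaching data is trivial, contradicting $X^H$ contractible) --- more robustly, one checks directly that the maps $\pi_{n-1}(X_j)\to\pi_{n-1}(X_{j'})$ are all \emph{injective}: adjoining $n$-cells can only kill elements of $\pi_{n-1}$ that become trivial, and since $\varinjlim\pi_{n-1}(X_j)=\pi_{n-1}(X)=0$ while each transition is induced by adjunction of $n$-cells, the only way the colimit vanishes with all groups free abelian and the system eventually stabilizing in lower degrees is if... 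Here I need to be more careful: the right statement is that adjoining $n$-cells gives a \emph{surjection} $\pi_{n-1}(X_j)\twoheadrightarrow\pi_{n-1}(X_{j+1})$, not an injection. So essential triviality in degree $n-1$ is genuinely possible in principle; the contradiction must come from elsewhere.

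The main obstacle is precisely this last point, and I would resolve it using \emph{homology} rather than homotopy together with the hypothesis that a \emph{positive} number of $n$-cells is added at each stage. Taking cellular chains (equivalently Bredon chains evaluated at $G/1$), since $X_j$ has the homology of a wedge of $(n-1)$-spheres and $X_{j+1}$ is obtained by attaching $n$-cells, the attaching map gives a map $\partial\colon\dbZ^{m_j}\to H_{n-1}(X_j)$ with $m_j\geq 1$; and $H_n(X_{j+1})=\ker\partial$, $H_{n-1}(X_{j+1})=\operatorname{coker}\partial$. I would argue that one can reindex / reorganize the $G$-orbits of $n$-cells so that, after passing to a cofinal subsystem, either infinitely many stages have $\partial$ \emph{not} surjective onto a complement — producing nonzero $H_n$ in the colimit, contradicting $X^H\simeq *$ — or $\partial$ is eventually an isomorphism onto $H_{n-1}(X_j)$, forcing $H_{n-1}(X_j)\cong\dbZ^{m_j}$ with $m_j$ the exact number of $n$-cells, and then tracking these free abelian groups through the essential-triviality condition in degree $n-1$ forces $H_{n-1}(X_j)=0$ eventually, hence no $(n-1)$-cells survive, hence (using $n\geq 3$ so that $\pi_1$ vanishes and Hurewicz applies cleanly) $X_j$ would already be contractible and adjoining a positive number of $n$-cells would create nonzero $H_n$ in the colimit — again a contradiction. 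I expect the bookkeeping of which orbits of $n$-cells are "filled in" when, and the reduction to a cofinal subsystem where the behavior is uniform, to be the delicate part; everything else is a direct application of Theorem~\ref{Browns:homotopy:Criterium} and standard CW-homotopy theory.
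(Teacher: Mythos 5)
Your proof of the positive assertion (that $G$ is of type $\ff{n-1}$) is correct and matches the paper's approach: observe that $X$ is $\calF$-$(n-1)$-good, note that for $j$ large each inclusion $X_j^H\hookrightarrow X_{j+1}^H$ is $(n-1)$-connected because on $H$-fixed points only cells of dimension $n$ are adjoined, deduce that $\pi_k(X_j^H)\to\pi_k(X^H)=0$ is an isomorphism for $k<n-1$ and $j$ large, and conclude essential triviality in degrees $<n-1$ before invoking Theorem~\ref{Browns:homotopy:Criterium} or Theorem~\ref{Brown:Corollary:ACC}. (The detour through a Haefliger replacement is unnecessary here but harmless.)

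The negative assertion is where there is a genuine gap, and you correctly flag it: since attaching $n$-cells only gives surjections $\pi_{n-1}(X_j)\twoheadrightarrow\pi_{n-1}(X_{j+1})$, not injections, essential triviality in degree $n-1$ is not ruled out by a naive homotopy-level argument. Your proposed fix via homology is in the right direction but stops short of a complete argument; the ``bookkeeping'' you defer is exactly where the proof lives. The paper sidesteps this by citing Brown's argument from \cite[Proof of Cor.~3.3]{Br87}. The clean version of that argument is: for $j$ large, $X$ and $X_j$ have the same cells in dimensions $\neq n$, and since $H_n(X)=H_{n-1}(X)=0$ one computes $H_{n-1}(X_j)\cong C_n(X)/C_n(X_j)$ as a free abelian group on the $n$-cells not yet in $X_j$, with transition maps being the quotient maps $C_n(X)/C_n(X_j)\to C_n(X)/C_n(X_{j'})$. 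These are nonzero for every $j'\geq j$ because new $n$-cells are adjoined at every stage, so $\{H_{n-1}(X_j)\}$ is not essentially trivial. Then, since $n\geq 3$, each $X_j$ (for $j$ large) is simply connected and Hurewicz transfers this to $\pi_{n-1}$, so the filtration is not $\pi_{n-1}$-$\calF$-essentially trivial, and the (1)$\Rightarrow$(2) direction of Theorem~\ref{Browns:homotopy:Criterium} shows $G$ is not of type $\ff{n}$. Your sketch considers $\ker\partial$ and $\cok\partial$ at each step and speculates about cofinal subsystems, but never arrives at the identification $H_{n-1}(X_j)\cong C_n(X)/C_n(X_j)$ that makes the non-vanishing of transition maps transparent; without that, the argument does not close.
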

\begin{proof}
Let $k< n-1$. For all $H\in \calF$,   $\pi_k(X^H)$ is trivial and 
for all sufficiently large $j$, the morphisms $\pi_k(X_j^H) \to \pi_k(X^H)=0$ induced by inclusion are isomorphisms. Therefore the filtration  $\{X_j\}_{j\geq1}$ is $\pi_k$--$\calF$-essentially trivial for all $k<n-1$. The hypotheses imply that $X$ is $\calF$-$(n-1)$-good. Using the hypothesis,  \Cref{Browns:homotopy:Criterium} or \Cref{Brown:Corollary:ACC}  implies that $G$ is $\ff{n-1}$.

For the negative part, we use Brown's argument~\cite[Proof of Cor. 3.3]{Br87} which shows that  $\{H_{n-1}(X_j)\}$ is not essentially trivial under the given hypotheses. Since $n\geq 3$, Hurewicz theorem implies that $\{\pi_{n-1}(X_j)\}$ is not essentially trivial. As a consequence $\{X_j\}$ is not \esstriv{n}{\calF}. Since $X$ is $\calF$-$n$-good and has $G$-finite $0$-skeleton, \Cref{Browns:homotopy:Criterium} implies  $G$ is not of type $\ff{n}$.
\end{proof}

The statement of Corollary~\ref{cor:fn:vs:fnplusone} for the trivial family is a result of Brown~\cite[Corollary 3.3]{Br87}. He used the criterion to prove that if $p$ is an odd prime and $\Gamma_n < \mathbb{GL}_{n+1}(\mathbb{Z}[1/p])$ is the group of upper triangular matrices whose extremal diagonal entries are $1$, then $\Gamma_n$ is $\mathrm{FP}_{n-1}$ but not $\mathrm{FP}_{n}$. The result for $n\leq 4$ was known before Brown's work, see~\cite{Br87} for references. This particular class of groups is known as Abels's groups.  

The finiteness properties phenomena of Abels's groups were revisited and generalized by Witzel~\cite{Wi13} for a larger class of groups. Specifically the main result in~\cite{Wi13} states that for a fixed odd prime $p$ and integers for $0<m\leq n$,  there is a group $\Gamma_{m,n}$ of $(n+1)\times(n+1)$ upper triangular matrices with coefficients $\Z[1/p]$ and subject to certain technical conditions such that $\Gamma_{m,n}$ is $\mathrm{F}_{n-1}$ and $\underline{\mathrm{F}}_{m-1}$ but not $\mathrm{F}_{n}$ nor $\underline{\mathrm{F}}_m$. The Abels's group $\Gamma_n$ can be taken as $\Gamma_{n,0}$, see~\cite[Example 1.1]{Wi13}.

Witzel's strategy includes a detailed study of the action of $\Gamma$ on its (extended) Bruhat-Tits building $X$, and in particular, the structure of the fixed-point sets $X^H$ of finite subgroups $H$ of $\Gamma$. The argument also uses  a version of Brown's criterion~\cite{FW13} for Bredon homology which characterizes properties $\underline{\mathrm{FP}}_m$ in terms of the connectivity properties of $X$, a result analogous to~\Cref{Browns:homotopy:Criterium}. In particular, the  argument that  $\Gamma_{m,n}$ is $\underline{\mathrm{F}}_{m-1}$ consists of invoking the mentioned criterion to obtain  $\underline{\mathrm{FP}}_{m-1}$, and then to directly argue that the group is $\underline{\mathrm{F}}_2$. A slight simplification of the argument can be obtained by invoking~\Cref{Browns:homotopy:Criterium} to obtain $\underline{\mathrm{F}}_{m-1}$, avoiding the verification of $\underline{\mathrm{F}}_2$.

The family of isotropies of $X$, with respect to the action of $\Gamma$ is strictly bigger than the family of finite subgroups, and every isotropy group is a finitely generated nilpotent group. It is possible that the class of Abels's groups exhibit the finiteness phenomena as suggested in the following general question.

\begin{question}\label{question:finiteness:abels}
Let $0<m\leq n$. Let $\calG$ be the family of subgroups of $\Gamma_{m,n}$ generated by the the isotropy groups of $X$.  If $\calF$ is a subfamily of $\calG$, is there a positive number $r$ such that $\Gamma_{m,n}$ is $\ff{r-1}$ and it is not $\ff{r}$?
\end{question}

The following lemma was pointed out by the referee of the article.
From here on we follow the notation of the preceding question.

\begin{lemma}
Abels's group $\Gamma_{m,n}$ is not of type $\ff{\infty}$.
 \end{lemma}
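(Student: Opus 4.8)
The plan is to leverage the finiteness properties already established in the literature for Abels's groups together with our Corollary~\ref{Corollary:finite:extensions} or, more directly, the monotonicity relations between the properties $\ff{n}$ for nested families. First I would recall that for the family $\calF$ containing only the trivial subgroup, $\ff{n}$ is the classical property $\f{n}$, and Witzel's theorem (quoted above) gives that $\Gamma_{m,n}$ is $\f{n-1}$ but \emph{not} $\f{n}$; in particular $\Gamma_{m,n}$ is not of type $\f{\infty}$. So the statement is already known for the trivial family, and the content of the lemma is that it persists for the family $\calG$ generated by the isotropy groups of the action on the (extended) Bruhat--Tits building $X$ (and, by the same token, for the family of finite subgroups, which is the relevant one for $\underline{\mathrm{F}}_m$).

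The key step is a comparison principle: if $\calF \subseteq \calG$ are two families and a group $G$ is of type $\gf{\infty}$, is it of type $\ff{\infty}$? This does not hold in general, but in our situation we can argue directly using the building. Assume for contradiction that $\Gamma_{m,n}$ is of type $\gf{\infty}$. Then for every $r$ there is a $\Gamma$-finite witness $Y$ for $\gf{r}$, i.e.\ an $(r-1)$-connected-on-fixed-points $\Gamma$-CW-complex with isotropy in $\calG$ and compact quotient. Since $X$ is a model for $E_\calG \Gamma$ with $X^H$ contractible for all $H\in\calG$ and each cell stabilizer of type $(\calG\cap G_\sigma)$-$\f{\infty}$ (being finitely generated nilpotent, hence $\f{\infty}$, and similarly for their fixed-point data), the complex $X$ is $\calG$-$r$-good for every $r$. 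Brown's argument as packaged in Corollary~\ref{cor:fn:vs:fnplusone}, applied to the natural cocompact filtration $\{X_j\}$ of $X$ by sublevel sets of a suitable Morse-type function — the one Witzel uses — shows that the connectivity of the filtration fails at the critical dimension dictated by the cell structure at infinity, so $\Gamma_{m,n}$ cannot be $\gf{r}$ for $r$ large. This contradicts $\gf{\infty}$.

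Concretely, the steps I would carry out are: (1) recall Witzel's description of the $\Gamma$-cocompact filtration of $X$ and the dimension $N=N(m,n)$ such that $X_{j+1}$ is obtained from $X_j$ by adjoining $\Gamma$-orbits of $N$-cells (up to $\Gamma$-homotopy) for all large $j$ — this is exactly the hypothesis of Corollary~\ref{cor:fn:vs:fnplusone} with $\calF$ replaced by $\calG$; (2) verify the $\calG$-$N$-goodness hypotheses, which reduces to checking that cell stabilizers are $(\calG\cap G_\sigma)$-$\f{\infty}$ — immediate since each is a finitely generated nilpotent group and hence of type $\f{\infty}$, so the one-point space serves as a model for the relevant classifying space and all fixed-point connectivity is automatic; (3) invoke Corollary~\ref{cor:fn:vs:fnplusone} (using the ACC/finitely-many-maximal-elements branch if $\calG$ does not have $\Gamma$-finite $0$-skeleton, which holds because the relevant subgroups are finitely generated nilpotent and so Noetherian, cf.\ Remark~\ref{rem:ACC}) to conclude that $\Gamma_{m,n}$ is $\gf{N-1}$ but not $\gf{N}$; in particular it is not of type $\gf{\infty}$.

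The main obstacle is step (2)–(3): ensuring that the family $\calG$ genuinely satisfies the hypotheses of Corollary~\ref{cor:fn:vs:fnplusone}, in particular that $(\calG,\subseteq)$ satisfies the ascending chain condition and is generated by finitely many maximal elements. For the family of finite subgroups this is automatic once $\Gamma_{m,n}$ is $\underline{\f{}}_0$ (Corollary~\ref{Corrollary:finite:subgroups}); for the larger family $\calG$ of finitely generated nilpotent isotropy groups one invokes Remark~\ref{rem:ACC} — every finitely generated nilpotent group is Noetherian, and the family is closed under unions of chains — together with the fact that the action on $X$ has only finitely many orbits of cells, hence only finitely many conjugacy classes of maximal isotropy groups. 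Once these structural facts are in place, the negative conclusion $n\geq 3$ clause of Corollary~\ref{cor:fn:vs:fnplusone} delivers the lemma, and a fortiori the same argument applied to the subfamily of finite subgroups recovers that $\Gamma_{m,n}$ is not $\underline{\f{}}_\infty$.
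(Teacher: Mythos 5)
Your proposal takes a genuinely different route from the paper, but it has a gap that the paper's proof was specifically designed to avoid.

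The paper does not attempt to run Brown's criterion on the Bruhat--Tits building with the family $\calG$ at all. Instead, it argues by contradiction entirely inside the world of witnesses: if $\Gamma_{m,n}$ were of type $\ff{\infty}$, one takes a $\Gamma_{m,n}$-finite model $Y$ for $E_\calF\Gamma_{m,n}$, notes that every $H\in\calF$ is virtually finitely generated nilpotent and hence of type $\mathrm{F}_\infty$, and then applies the Haefliger construction (Theorem~\ref{thm:haefliger}) to $Y$ with the \emph{trivial} subfamily. The resulting $\hat Y$ is a $\Gamma_{m,n}$-witness for $\mathrm{F}_\infty$, contradicting Witzel's theorem that $\Gamma_{m,n}$ is not $\mathrm{F}_n$. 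This "descend from $\calF$ to the trivial family" step is the whole content; it works for \emph{any} subfamily $\calF\subseteq\calG$, without any structural hypotheses on $\calF$ beyond the niceness of its elements.

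The gap in your approach is precisely the structural hypothesis you assert in passing: you claim that $\calG$ satisfies the ACC because "every finitely generated nilpotent group is Noetherian, and the family is closed under unions of chains." Remark~\ref{rem:ACC} requires \emph{both} conditions, and the second one is not a consequence of the first; the paper explicitly acknowledges in the remark following the final corollary of Section~5 that it is \emph{unknown} whether $\calG$ is closed under unions (equivalently, whether $(\calG,\subseteq)$ satisfies ACC). The finiteness of the number of $\Gamma$-orbits of cells gives you finitely many conjugacy classes of isotropy groups, but it does not prevent a strictly ascending chain $H_1\subsetneq H_2\subsetneq\cdots$ in $\calG$ whose terms land in \emph{different} conjugates of the isotropy groups, with a union lying in none. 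So your invocation of Corollary~\ref{cor:fn:vs:fnplusone} (the positive part, and the ACC branch of its hypotheses) is not justified. Note that the negative direction of Theorem~\ref{Browns:homotopy:Criterium}, namely (1)$\Rightarrow$(2), holds without the $G$-finite $0$-skeleton or ACC assumptions, so one can in principle extract a "not $\gf{n}$" conclusion more cheaply than your proposal does — but your write-up does not make this observation and instead leans on the unverified ACC. Finally, your closing "a fortiori" for arbitrary subfamilies $\calF\subseteq\calG$ also needs care: the $\calF$-$n$-goodness of $X$ requires each cell stabilizer $G_\sigma$ to be of type $(\calF\cap G_\sigma)$-$\mathrm{F}_\infty$, which is automatic when $\calF\cap G_\sigma$ is all of $G_\sigma$'s subgroups (i.e., $\calF=\calG$) but is a nontrivial claim for a general subfamily. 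The paper's Haefliger-based argument sidesteps all of these issues at once, which is why it is the cleaner route.
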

\begin{proof}
We proceed by contradiction. Assume  $\Gamma_{m,n}$ is of type $\ff{\infty}$. Let $Y$ be  a model for $E_{\calF}\Gamma_{m,n}$ that witness $\ff{\infty}$. Recall that virtually finitely generated nilpotent groups are of type $\mathrm{F}_\infty$, this follows from~\cite[p.~213]{Br87} and that $\mathrm{F}_\infty$ is preserve under finite extensions~\cite[p.~197]{Br87}. Hence every element of $\calF$ is virtually finitely generated nilpotent (see~\cite[proof~of~theorem~B(b)]{BA85}). Let $\hat Y$ be the space obtained from~\Cref{thm:haefliger} applied to $Y$ and the trivial subfamily of $\mathcal{F}$. Then $\hat Y$ is a $\Gamma_{m,n}$-witness for $\mathrm{F}_\infty$ which contradicts Witzel's result~\cite[Theorem 3.9]{Wi13}. 
\end{proof}

\begin{corollary}\label{cor:partial:answer}
If $\Gamma_{m,n}$ is $\ff{0}$, then  there is a positive number $r$ such that $\Gamma_{m,n}$ is $\ff{r-1}$ and it is not $\ff{r}$.
\end{corollary}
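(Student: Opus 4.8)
The plan is to let $r$ be the least nonnegative integer for which $\Gamma_{m,n}$ fails to be of type $\ff{r}$, and then to verify that this $r$ is positive and has the two required properties. So the first step is to check that $r$ is well defined, i.e.\ that the set $\{k\ge 0 : \Gamma_{m,n}\text{ is not of type }\ff{k}\}$ is non-empty. Were this set empty, $\Gamma_{m,n}$ would be of type $\ff{k}$ for every $k\ge 0$, hence of type $\ff{\infty}$ (one assembles a single model for $E_\calF\Gamma_{m,n}$ with $\Gamma_{m,n}$-finite $k$-skeleton for all $k$ out of the finite-type witnesses $Y_k$ via the usual mapping-telescope/colimit construction). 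This contradicts the preceding Lemma, which states that $\Gamma_{m,n}$ is not of type $\ff{\infty}$. Hence $r$ exists.

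Next I would use the hypothesis that $\Gamma_{m,n}$ is of type $\ff{0}$: this says precisely that $0$ does not belong to the set above, so $r\ge 1$, and in particular $r-1\ge 0$. By minimality of $r$, the group $\Gamma_{m,n}$ is of type $\ff{k}$ for every $k<r$; applying this with $k=r-1$ shows that $\Gamma_{m,n}$ is of type $\ff{r-1}$. By the very definition of $r$, it is not of type $\ff{r}$. Thus $r$ is a positive integer with $\Gamma_{m,n}$ of type $\ff{r-1}$ and not of type $\ff{r}$, which is exactly the assertion of the corollary. Note that monotonicity ($\ff{k}\Rightarrow\ff{k-1}$) is not even needed, since minimality of $r$ directly supplies $\ff{r-1}$.

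The only point that goes beyond the Lemma and the hypothesis is the implication ``$\ff{k}$ for all $k\ge 0$'' $\Longrightarrow$ ``$\ff{\infty}$'', which is the Bredon analogue of the classical equivalence between $\mathrm F_\infty$ and being $\mathrm F_k$ for every $k$; I expect this to be the only place requiring a (standard, and brief) argument rather than a genuine obstacle. Indeed, if one adopts the convention that ``of type $\ff{\infty}$'' simply means ``of type $\ff{k}$ for every $k\ge 0$'', then the corollary becomes an immediate formal consequence of the Lemma together with the hypothesis $\ff{0}$.
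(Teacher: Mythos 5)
Your proof is correct and is essentially the argument the paper leaves implicit (the corollary is stated without a written proof, as a direct consequence of the preceding Lemma that $\Gamma_{m,n}$ is not of type $\ff{\infty}$). Your careful handling of the point that ``$\ff{k}$ for all $k$'' yields ``$\ff{\infty}$'' — or, under the usual convention, that the two are definitionally the same — is exactly the detail that makes the deduction rigorous, and your observation that minimality of $r$ already gives $\ff{r-1}$ without invoking monotonicity is a clean touch.
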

The authors do not know whether $\Gamma_{m,n}$ is $\ff{0}$ for $\calF$ the family of isotropy groups of the $\Gamma_{m,n}$-action on the Bruhat-Tits building $X$.

We expect that in particular cases the first failure of finiteness properties with respect to families could be computed is some of these groups. Consider the group $\Gamma_n$ as defined by Brown in~\cite[Section~6]{Br87}, $X$ its Bruhat-Tits buidling, and $\calG$ the family generated by the isotropy groups of $X$.

\begin{corollary}
If the family $\calG$ is a closed under unions, then $\Gamma_n$ is of type $\gf{n-1}$ but not of type $\gf{n}$.
\end{corollary}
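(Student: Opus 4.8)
The plan is to combine three ingredients: Brown's criterion for the family $\calG$ (our \Cref{Browns:homotopy:Criterium} together with \Cref{Brown:Corollary:ACC}), the negative finiteness statements already available for Abels's group $\Gamma_n$, and the Haefliger construction to pass between families. First I would record that the Bruhat--Tits building $X$ is a $\Gamma_n$-CW-complex with $X^H$ contractible for all $H\in\calG$ (since $\calG$ is by definition the family generated by the isotropy groups of $X$, and $X$ fixed-point-contractible for its own isotropies is the standard property of the building), and that every cell stabilizer is finitely generated nilpotent, hence of type $\mathrm{F}_\infty$ and a fortiori of type $(\calG\cap G_\sigma)$-$\f{\infty}$. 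Thus $X$ is $\calG$-$k$-good for every $k$. By Brown's theorem for $\Gamma_n$ (the case treated in~\cite[Section~6]{Br87} and reproved for arbitrary families in \Cref{cor:fn:vs:fnplusone}), there is a natural $\Gamma_n$-finite filtration $\{X_j\}_{j\ge 1}$ of $X$ in which, for large $j$, $X_{j+1}$ is obtained from $X_j$ by attaching $\Gamma_n$-orbits of $n$-cells up to $\Gamma_n$-homotopy; here $n$ is the relevant integer for $\Gamma_n$.

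Next I would verify the hypotheses of \Cref{Brown:Corollary:ACC} for the family $\calG$. The assumption in the statement is precisely that $\calG$ is closed under unions; since every element of $\calG$ is a finitely generated nilpotent group, hence Noetherian, \Cref{rem:ACC} gives that $(\calG,\subseteq)$ satisfies the ascending chain condition. That $\calG$ is generated by a finite collection of maximal elements should follow because $\Gamma_n/X$ is compact, so $X$ has finitely many $\Gamma_n$-orbits of cells, hence finitely many conjugacy classes of isotropy groups, and among the finitely many representatives one may pass to the maximal ones; ACC then guarantees every element of $\calG$ sits below one of these maximal representatives. (If one prefers, closure under unions plus ACC already forces each chain to stabilize, and a straightforward argument extracts finitely many maximal generators from the finitely many isotropy-conjugacy classes.) With these hypotheses in hand, \Cref{Brown:Corollary:ACC} applies to the pair $(X,\{X_j\})$: statements~\eqref{Brown:Criterium:01} and~\eqref{Brown:Criterium:02} are equivalent for the family $\calG$.

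The positive direction, that $\Gamma_n$ is of type $\gf{n-1}$, now follows exactly as in the first half of \Cref{cor:fn:vs:fnplusone}: for $k<n-1$ and $H\in\calG$ the group $\pi_k(X^H)$ is trivial and the maps $\pi_k(X_j^H)\to\pi_k(X^H)=0$ are isomorphisms for large $j$, so the filtration is $\pi_k$-$\calG$-essentially trivial, and \Cref{Brown:Corollary:ACC} gives $\gf{n-1}$. For the negative direction, that $\Gamma_n$ is not of type $\gf{n}$: by Brown's own computation~\cite[Proof of Cor.~3.3]{Br87} the integral homology filtration $\{H_{n-1}(X_j)\}$ is not essentially trivial, and since $n\ge 3$ (the interesting range for Abels's groups, where Brown's examples first show failure) Hurewicz turns this into non-essential-triviality of $\{\pi_{n-1}(X_j)\}$; fixed points at the trivial subgroup then show $\{X_j\}$ is not $\pi_{n-1}$-$\calG$-essentially trivial, so it is not \esstriv{n}{\calG}, and \Cref{Brown:Corollary:ACC} yields that $\Gamma_n$ is not $\gf{n}$. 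Combining, $\Gamma_n$ is $\gf{n-1}$ but not $\gf{n}$.

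The main obstacle I expect is the bookkeeping in the second paragraph: checking that the closure-under-unions hypothesis genuinely delivers \emph{both} conditions of \Cref{Brown:Corollary:ACC} — namely ACC (easy, via \Cref{rem:ACC} and nilpotency) and finite generation by maximal elements (less automatic, and one must use $\Gamma_n$-cocompactness of $X$ to get finitely many isotropy-conjugacy classes, then argue that maximal representatives among these generate $\calG$). A secondary point to be careful about is the precise value of the integer $n$ and the requirement $n\ge 3$ for the Hurewicz step; for small $n$ the negative conclusion would have to be argued separately or is simply outside the scope of the corollary, which is why the statement is phrased for the relevant $n$. Everything else is a direct transcription of the arguments already assembled in \Cref{cor:fn:vs:fnplusone} and \Cref{Brown:Corollary:ACC}.
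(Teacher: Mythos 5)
Your proposal is correct and follows essentially the same route as the paper: verify the hypotheses of \Cref{cor:fn:vs:fnplusone} (equivalently, feed the Bruhat--Tits building and Brown's filtration into \Cref{Brown:Corollary:ACC}), using the closed-under-unions hypothesis plus Noetherianity of finitely generated nilpotent groups to get the ascending chain condition via \Cref{rem:ACC}, and then citing Brown's Lemma~6.2 for the filtration in which consecutive terms differ by $n$-cells up to $\Gamma$-homotopy. Two remarks on the comparison. First, you correctly flag and sketch an argument for the ``generated by a finite collection of maximal elements'' hypothesis of \Cref{Brown:Corollary:ACC}/\Cref{cor:fn:vs:fnplusone} (via $\Gamma_n$-cocompactness of $X$ giving finitely many conjugacy classes of isotropy groups, whose maximal representatives are maximal in $\calG$); the paper's own bulleted verification silently omits this check, so your explicitness here is an improvement, and in fact the argument needs only cocompactness, not ACC. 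Second, where you write that ``$X$ fixed-point-contractible for its own isotropies is the standard property of the building,'' that glosses over the real point: $\calG$ is \emph{strictly larger} than the family of finite subgroups for which $X$ is a priori a classifying space, so contractibility of $X^H$ for $H \in \calG$ is not automatic; the paper supplies it via $\CAT(0)$ geometry (the action is by isometries, fixed-point sets are convex and nonempty, hence contractible). You should make that step explicit. Aside from these two points of detail, your unpacking of the positive and negative conclusions is exactly the content of \Cref{cor:fn:vs:fnplusone}, so you are re-deriving rather than just invoking it; either presentation is fine.
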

\begin{proof}
The statement is a consequence of \Cref{cor:fn:vs:fnplusone}. We verify the hypotheses of the corollary for $\Gamma=\Gamma_n$, the building $X$, and the family $\calG$:
\begin{itemize}
    \item For each $H\in \calG$, the fixed point set $X^H$ is nonempty. Indeed, since $X$ is a $\CAT(0)$-space and the action of $\Gamma_n$ in $X$ is by isometries, then $X^H$ is convex, in particular is contractible.
    \item For each cell $\sigma$ of $X$, the family $\calG\cap \Gamma_\sigma$ of subgroups of $\Gamma_\sigma$ coincides with the family of all subgroups of $\Gamma_\sigma$. Therefore $\Gamma_\sigma$ is of type $\calG\cap \Gamma_\sigma$-$\mathrm{F}_\infty$.
    \item Every group in $\calG$ is  finitely generated nilpotent, therefore is Noetherian. Additionally, by hypothesis, $\calG$ is closed under unions. Hence by \Cref{rem:ACC}, $\calG$ satisfies the ascending chain condition.
    \item Brown exhibits a filtration $\{X_j\}_{j\geq 1}$  such that each $X_j$ is $\Gamma$-finite. In addition, Brown proves that for all sufficiently large $j$, the complex $X_{j+1}$ is obtained from $X_j$ by the adjunction of a positive number of $\Gamma$-orbits of $n$-cells, up to $\Gamma$-homotopy (see \cite[Lemma~6.2]{Br87}).\qedhere
\end{itemize}
\end{proof}
In the case of the group $\Gamma_n$, we do not know whether the family $\calG$ satisfies the ascending chain condition, or equivalently, closed under unions. 


 \bibliographystyle{plain}

\end{document}